\newtheorem{theorem}{Theorem}[section]
\newtheorem{lemma}[theorem]{Lemma}
\theoremstyle{definition}
\newcommand{\ind}{\mathds{1}} %Indikator
\newcommand{\C}{\mathbb{C}} % komplexe
\newcommand{\E}{\mathbb{E}} % E-Wert
\newcommand{\R}{\mathbb{R}} % reelle
\newcommand{\Z}{\mathbb{Z}} % ganze
\newcommand{\N}{\mathbb{N}} % natuerliche
\newcommand{\on}{\operatorname} %Operator
\newcommand{\LK}{\mathcal{L}} %Lipschitz-Killing
\newcommand{\A}{\mathcal{A}}
\newcommand{\Co}{\on{Cov}}
\begin{document}
\author{Dennis Müller}
\title{A Central Limit Theorem for Lipschitz-Killing Curvatures of Gaussian Excursions}
\maketitle
\begin{abstract}
This paper studies the excursion set of a real stationary isotropic Gaussian random field above a fixed level. We show that the standardized Lipschitz-Killing curvatures of the intersection of the excursion set with a window converges in distribution to a normal distribution as the window grows to the $d$-dimensional Euclidean space. Moreover a lower bound for the asymptotic variance is derived.
\end{abstract}

\let\thefootnote\relax\footnote{
\textit{2010 Mathematics Subject Classification:} Primary: 60F05; Secondary: 60D05; 60G60; 60G15.

\textit{Keywords:} Gaussian random field, excursion set, Lipschitz-Killing curvature, central limit theorem, Malliavin-Stein method, Wiener Chaos expansion.}

\section{Introduction}

Let $X=\{X(t)\mid t\in \R^d\}$ be a real Gaussian random field defined on a probability space $(\Omega, \mathcal{F},\mathbb{P})$. The excursion set of $X$ for the level $u\in\R$ is the random set
\begin{align*}
  X^{-1}([u,\infty))=\{t\in \R^d\mid X(t)\geq u\},
\end{align*}
whose properties are an active area of research, cf. \cite{book:AdlerTaylor}, \cite{book:AzaisWschebor}, \cite{paper:LachiezeRey}, \cite{paper:AdlerMoldavskayaSamorodnitsky} among others. As a stochastic model, random fields have many applications, for instance in human brain mapping (\cite{paper:CaoWorsley}), astrophysics (\cite{book:LiddleLyth}) and optics (\cite{paper:BerryDennis}).

To gain a deeper understanding of random excursion sets, several geometric characteristics can be used. In this paper we generalize results for the Euler-Poincar\'e characteristic to the so-called Lipschitz-Killing curvatures $\LK_m$, which are given for $m=0,\ldots ,d-1$ and $M\subset \R^d$ closed with nonempty interior, $\mathcal{C}^2$ boundary and induced Riemannian structure by
\begin{align*}
  \LK_{m}(M)= \frac{1}{\omega_{d-m}}\int_{\partial M} \on{detr}_{d-1-m}(S_{E_d}(E_i,E_j))_{i,j=1}^{d-1} \,d\mathcal{H}^{d-1},
\end{align*}
where $(E_i)_{i=1\,\ldots,d-1}$ denotes an orthonormal frame field on $\partial M$, $E_d$ denotes the inward normal, $S$ denotes the scalar second fundamental form, $\on{detr}_{d-1-m}(A)$ denotes the sum over all $(d-1-m)\times (d-1-m)$ principal minors of $A$, the constant $\omega_{d-m}$ denotes the surface area of the $(d-m-1)$-dimensional unit sphere $S^{d-m-1}$ and  $\mathcal{H}^{d-1}$ denotes the $(d-1)$\nobreakdash-\hspace{0pt}dimensional Hausdorff measure. For further details see \cite [(10.7.6)]{book:AdlerTaylor}, and  in particular, \cite [Section 10.7]{book:AdlerTaylor} for the more complex framework of Whitney stratified spaces considered in this paper. For special choices of $m$ the Lipschitz-Killing curvatures describe simple geometric features of the set like the volume ($m=d$), half the surface area ($m=d-1$) and the Euler-Poincar\'e characteristic ($m=0$).

The aim of this work is to establish a central limit theorem for the standardized $m$\nobreakdash-\hspace{0pt}th Lipschitz-Killing curvature of the intersection of an excursion set for the level $u$ of a stationary isotropic Gaussian random field with an open ball $B^d_N$ of radius $N$, as $N$ goes to infinity, that is
\begin{align*}
  \frac{\LK_{m}\left (B^d_N\cap X^{-1}([u,\infty))\right)-\E\left[\LK_{m}\left (B^d_N\cap X^{-1}([u,\infty))\right) \right]}{(\LK_d(B^d_N))^{\frac{1}{2}}} \underset{N\to \infty}{\overset{\mathcal{D}}{\longrightarrow}} \mathcal{N}(0,\sigma^2_m)
\end{align*}
for some $\sigma^2_m\geq0$, where a lower bound for $\sigma_m^2$ is given in Lemma \ref{lem:lowerBound}. The present paper generalizes the work of \cite{paper:EstradeLeon}, where such a CLT is established for $m=0$. The case $m=d-1$ and $d=2$ is treated in \cite{paper:KratzLeon}. For the case $m=d$ of the volume, the central limit theorem holds under weaker requirements than Gaussianity, for instance, for quasi-associated random fields, PA- or NA-random fields, Max- or $\alpha$-stable fields, cf. the survey \cite{review:Spodarev} and the references therein. For this reason we concentrate on the cases $m=0,\ldots,d-1$ in this work.

We pursue the following strategy of proof. First, we apply the Crofton formula from integral geometry to express the $m$-th Lipschitz-Killing curvature of a sufficiently regular set $M\subset \R^d$ as an integral average of the Euler-Poincar\'e characteristics of the intersections of $M$ with affine $(d-m)$-flats, where the integration is with respect to the suitably normalized motion invariant measure $\mu$ over the affine Grassmannian $A(d,d-m)$ of all affine $(d-m)$-flats of $\R^d$ (cf. \cite[Thm. 13.1.1]{book:AdlerTaylor}).
An application to $M=B^d_N\cap X^{-1}([u,\infty))$ leads to the investigation of the Euler-Poincar\'e characteristic of the intersection of the Gaussian excursion with a lower dimensional ball in an affine subspace. By Morse Theory (cf. \cite [Corollary 9.3.5]{book:AdlerTaylor}), this characteristic can be expressed as a difference of counting variables.
Inspired by the ideas of \cite{paper:EstradeLeon}, we use a refinement of the approach in \cite{paper:EstradeLeon} to control the dependence of the counting variables on the affine flat. That is, we use Rice's formulas, cf. \cite [Chapter 6]{book:AzaisWschebor}, \cite [Section 11.2]{book:AdlerTaylor}, in the affine flat to obtain a Hermite expansion of the $m$-th Lipschitz-Killing curvature via an approximation argument. This Hermite expansion leads to a representation of $\LK_{m}\left (B^d_N\cap X^{-1}([u,\infty))\right)$ by stochastic integrals, to which we apply results from the theory of normal approximation based on Stein's method and Malliavin calculus as described in \cite{book:NourdinPeccati}.

The basic tool of our approach, the Wiener chaos expansion, was already prominent in the works of \cite{paper:ChambersSlud}, \cite{paper:Slud} and \cite{paper:KratzLeon1997}, to mention just a few. This access to normal approximations is very popular and is used in various settings similar to ours, cf. \cite{paper:AdlerNaitzat}, who show a central limit theorem for the Euler integration of random functions, \cite{paper:CammarotaMarinucci}, who investigate Gaussian excursions on the $2$-sphere or  \cite{paper:Nicolaescu}, who studies critical points of random Fourier series on the $m$-dimensional torus.

Although less explicit, the results of this paper might be compared with recent progress in the second order analysis of the Boolean model, another fundamental model of stochastic geometry, cf. \cite{paper:HugLastSchulte}, \cite{book:LastPenrose}. This progress is largely based on the Malliavin calculus for general Poisson processes.

\section{Main Theorem}
We impose the following conditions on a given real random field $X=\{X(t)\mid t\in \R^d\}$.
\begin{enumerate}[label= (A\arabic*)]
  \item \label{as:dif} $X$ is a centered, stationary, isotropic Gaussian field. The trajectories are almost surely of class $\mathcal{C}^3$. The abbreviation $\Co^X(t):=\E\left[ X(t)X(0)\right]$, $t\in \R^d$, denotes the covariance function of $X$, which satisfies $\on{Cov}^X(0)=1$ and $-D^2 \on{Cov}^X(0)=I_d$.

  \item \label{as:nondegenerate} For $0\neq t\in \R^d$ the covariance matrix of the vector
    \begin{align*}
      \left(X(t),\left(\frac{\partial}{\partial t_i} X(t)\right)_{i=1}^d,\left(\frac{\partial^2}{\partial t_i \partial t_j }X(t)\right)_{1\leq i\leq j\leq d},\left(\frac{\partial}{\partial t_i} X(0)\right)_{i=1}^d\right)
    \end{align*}
    has full rank.
  \item\label{as:covariance} The mapping defined by
    \begin{align*}
       \psi(t):=\max\left \{\left |\frac{\partial^{k}}{\partial t_{j_1}\ldots \partial t_{j_k}}\on{Cov}^X(t)\right |:k\in\{0,\ldots, 4\},1\leq j_1,\ldots,j_k \leq d\right\}
     \end{align*}
     for $t\in \R^d$, satisfies
     \begin{align*}
       \psi(t)\stackrel{\|t\|\to \infty}{\longrightarrow}0\text{ and } \psi \in L^1(\R^d).
     \end{align*}
\end{enumerate}
We heavily rely on \ref{as:dif} in several places, for instance in the proof of Lemma \ref{lem:decom} and in the calculations in the appendix for Lemma \ref{lem:hilfL^2}. If \ref{as:nondegenerate} holds, the conditions on the covariance from \ref{as:dif} are always satisfied after normalizing the Gaussian field. We believe that it is enough to assume $\mathcal{C}^2$ regularity and an integrability condition on $\Co^X$, cf. \cite{paper:EstradeFournier}, but stick to the $\mathcal{C}^3$ assumption to smoothen the computations of the appendix. Under the differentiability assumptions of \ref{as:dif}, the condition \ref{as:nondegenerate} ensures that the paths of $X$ are almost surely Morse functions and allows us to perform calculations involving Gaussian regressions. Condition \ref{as:covariance} implies that the conditions for a central limit theorem are satisfied.
Note that from \ref{as:covariance} we obtain that $\psi \in L^q(\R^d)$, $q\in \N$, and moreover that $X$ admits a continuous spectral density, cf. \cite [Theorem 2.§12.3 (Inversion Formula)]{book:Shiryaev}. Furthermore the mapping defined by
\begin{align*}
  \widetilde\psi(t):=\sup\left \{\left |\frac{\partial^{k}}{\partial v_1\ldots \partial v_k}\on{Cov}^X(t)\right |:k\in\{0,\ldots, 4\},v_1,\ldots,v_k\in S^{d-1}\right\}, \quad t\in \R^d
\end{align*}
satisfies $\widetilde\psi(t)\leq d^2\psi(t)$, for $t\in \R^d$, and therefore is also in $L^q(\R^d)$, $q\in \N$.

Let $u\in \R$ be the level of the considered excursion set and denote by $B^d_N\subset \R^d$ the open ball with radius $N\in \N$ centered at the origin.
We proof the following central limit theorem.
\begin{theorem}\label{thm:mainTheorem}
  Let $X$ be a real Gaussian field on $\R^d$, which satisfies the assumptions (A1)--(A3) and let $m\in\{0,\ldots,d-1\}$. Then the $m$-th Lipschitz-Killing curvature $\LK_m$ of the excursion set for the level $u\in \R$ satisfies
  \begin{align*}
    \frac{\LK_{m}\left (B^d_N\cap X^{-1}([u,\infty))\right)-\E\left[\LK_{m}\left (B^d_N\cap X^{-1}([u,\infty))\right) \right]}{\mathcal{H}^d(B^d_N)^{\frac{1}{2}}} \stackrel{\mathcal{D}}{\longrightarrow} \mathcal{N}(0,\sigma^2_m)
  \end{align*}
  for $N\to \infty$ and some $\sigma^2_m\geq0$.
\end{theorem}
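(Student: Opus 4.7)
First, I would apply the Crofton formula from integral geometry (\cite[Thm.~13.1.1]{book:AdlerTaylor}) to express
\[
  \LK_m\bigl(B^d_N\cap X^{-1}([u,\infty))\bigr)
  = c_{d,m}\int_{A(d,d-m)}\chi\bigl(L\cap B^d_N\cap X^{-1}([u,\infty))\bigr)\,\mu(dL)
\]
for a suitable constant $c_{d,m}$, thereby reducing the problem to the Euler--Poincaré characteristic $\chi$ of the Gaussian excursion inside the $(d-m)$-dimensional ball $L\cap B^d_N$ sitting in an affine flat $L$. For $\mu$-almost every $L$, assumptions \ref{as:dif} and \ref{as:nondegenerate} guarantee that the restriction $X|_L$ is almost surely a Morse function, so Morse theory (\cite[Cor.~9.3.5]{book:AdlerTaylor}) expresses $\chi(L\cap B^d_N\cap\{X\geq u\})$ as an alternating sum of counts of critical points of $X|_L$ of prescribed Morse index lying above level $u$, plus a contribution from the smooth boundary part $L\cap\partial B^d_N$.

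Second, each interior critical-point count is of the form
\[
  \sum_{t\in L\cap B^d_N:\,\nabla_L X(t)=0,\,\mathrm{ind}(t)=i}\ind_{\{X(t)\geq u\}},
\]
which by Rice's formula (\cite[Ch.~6]{book:AzaisWschebor}) can be written as a generalized integral over $L\cap B^d_N$ carrying a formal factor $\delta_0(\nabla_L X(t))$. Regularizing this factor by convolution with a Gaussian density of bandwidth $\varepsilon>0$ yields a smooth functional of $X$ whose Hermite expansion can be computed pointwise by Gaussian regression on the triple $(X,\nabla_L X,\nabla_L^2 X)$. Integrating over $L\in A(d,d-m)$ via Fubini and then passing to the limit $\varepsilon\to 0$ in $L^2(\Omega)$ produces a Wiener chaos expansion
\[
  \LK_m\bigl(B^d_N\cap X^{-1}([u,\infty))\bigr)-\E\bigl[\LK_m\bigl(B^d_N\cap X^{-1}([u,\infty))\bigr)\bigr]
  =\sum_{q\geq 1}I_q\bigl(f^{(N)}_q\bigr),
\]
where $I_q$ denotes the $q$-th multiple Wiener--Itô integral with respect to the spectral representation associated with $\Co^X$.

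Third, I would apply the Malliavin--Stein machinery (\cite{book:NourdinPeccati}), in particular the fourth moment and Peccati--Tudor theorems, chaos by chaos. The contraction norms $\|f^{(N)}_q\otimes_r f^{(N)}_q\|$ for $1\leq r\leq q-1$ reduce to iterated integrals over $(\R^d)^{2q-r}$ of products of $\Co^X$ and its derivatives up to fourth order. Assumption \ref{as:covariance}, together with $\widetilde\psi\leq d^2\psi$ and hence $\widetilde\psi\in L^q(\R^d)$ for every $q\in\N$, forces these contractions to be of strictly smaller order than the normalizing variance $\mathcal{H}^d(B^d_N)$; consequently each normalized chaos $I_q(f^{(N)}_q)/\mathcal{H}^d(B^d_N)^{1/2}$ converges in distribution to a centered Gaussian of variance $\sigma^2_{m,q}$. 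A uniform tail bound
\[
  \sup_{N}\frac{1}{\mathcal{H}^d(B^d_N)}\sum_{q>Q}\E\bigl[I_q(f^{(N)}_q)^2\bigr]\xrightarrow[Q\to\infty]{}0
\]
would then complete the argument, and one defines $\sigma_m^2=\sum_{q\geq 1}\sigma^2_{m,q}$.

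The principal obstacle, in my view, is the combination of two delicate $L^2$ estimates. The $\varepsilon\to 0$ convergence of the regularized Rice integrands has to be carried out after averaging over the affine Grassmannian $A(d,d-m)$, and the uniform-in-$N$ tail estimate above must accommodate both the merely polynomial decay of the Hermite coefficients of $\ind_{\{X\geq u\}}$ and the nontrivial regression couplings between $X$, $\nabla_L X$ and $\nabla_L^2 X$, which depend measurably on the orientation of $L$. Here both the polynomial-order bounds for Gaussian regression covariances and the full $L^q$-integrability of $\widetilde\psi$ provided by \ref{as:covariance} are used essentially. Isolating a non-vanishing leading chaos component, required for the lower bound on $\sigma_m^2$ asserted in Lemma~\ref{lem:lowerBound}, appears to be the key additional difficulty on the way to a genuinely non-degenerate CLT.
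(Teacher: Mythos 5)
Your proposal follows essentially the same route as the paper: Crofton formula, Morse-theoretic representation of the Euler characteristic in each flat, Dirac-type regularization and $L^2$ passage to a Hermite/Wiener chaos expansion, and then the Nourdin--Peccati multidimensional CLT (conditions on chaos variances, contractions, and a uniform tail bound), with only cosmetic differences such as using a Gaussian mollifier instead of the normalized indicator of a ball. You also correctly identify the two genuinely delicate points — the $\varepsilon\to 0$ limit after averaging over $A(d,d-m)$ and the uniform-in-$N$ tail estimate — which the paper resolves via Lemma \ref{lem:hilfL^2} and Lemma \ref{lem:auxBounds} together with the Arcones inequality.
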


A lower bound for the asymptotic variance $\sigma_m^2$ is shown in Lemma \ref{lem:lowerBound}.

%The lower bound $\sigma^2_m \geq (2\pi)^mf(0)H_{d-m}(u)^2\phi(u)^2$, where $f$ denotes the spectral density of $X$, $H_k$, $k\in \N$ denotes the $k$-th Hermite polynomial and $\phi$ is the density of a standard normal random variable, will be derived in Lemma \ref{lem:lowerBound}.

\section{Approximation of Lipschitz-Killing curvatures}

We fix the following notation. Let $f\colon\R^d \to \R$ be a mapping of class $\mathcal{C}^2$. We denote for $t\in \R^d$ by $\nabla f(t)$ and $D^2 f(t)$ the gradient and the $d\times d$-matrix $(\frac{\partial ^2}{\partial t_i \partial t_j} f)_{1\leq i,j \leq d}$ of second derivatives  of $f$, respectively. For $F\in A(d,d-m)$ we denote by $F^\circ$ the directional space of $F$, which is an element in the Grassmannian $G(d,d-m)$ of $(d-m)$-dimensional linear subspaces of $\R^d$. The motion invariant measure $\nu$ on $G(d,d-m)$ is normalized such that $\nu(G(d,d-m))=\binom{d}{d-m}\frac{\omega_{d}}{\omega_{m}\omega_{d-m}}$. We denote by $v(F):=(v_1,\ldots,v_{d-m})$ an orthonormal basis of $F^\circ$ and define the gradient of $f$ in $F$ as the vector field given by
\begin{align*}
  \nabla(f|_F)(t):=\sum_{i=1}^{d-m} \frac{\partial}{\partial v_i} f(t) v_i,
\end{align*}
for $t\in F$, where $\frac{\partial}{\partial v_i}$ denotes the directional derivative in direction $v_i$. The second derivative of $f$ in the affine flat $F$ and in point $t\in F$ is defined as the linear mapping on $F^\circ$ given by the $d\times d$-matrix
\begin{align*}
  D^2(f|_F)(t) := \left (\begin{array}{@{}c|c|c@{}}
      v_1 & \cdots & v_{d-m} \\
      \end{array}\right ) \left(\frac{\partial^2}{\partial v_i\partial v_j}f(t)\right)_{1\leq i,j\leq d-m}\left (\begin{array}{@{}c|c|c@{}}
      v_1 & \cdots & v_{d-m} \\
      \end{array}\right )^\top.
\end{align*}
We note that these definitions coincide with the Riemannian ones, using for $F$ the coordinate map $\varphi\colon F\to \R^{d-m}$ given by $v\mapsto (v_1| \ldots |v_{d-m})^\top v$ and therefore do not depend on the choice of $v(F)$. Moreover, we define
\begin{align} \label{def:nablaInF}
  \nabla_{v(F)} f \colon \R^d \to \R^{d-m},\quad t \mapsto \left ( \frac{\partial}{\partial v_i} f(t)\right)_{i=1}^{d-m},
\end{align}
whose components are the coefficients of $\nabla(f|_F)$ in the basis $v(F)$, as well as
\begin{align}\label{def:D2InF}
  D^2_{v(F)} f \colon \R^d \to \R^{(d-m)\times (d-m)},\quad t\mapsto \left(\frac{\partial^2}{\partial v_i \partial v_j}f(t)\right)_{i,j=1}^{d-m}.
\end{align}

Using standard results from \cite{book:AdlerTaylor}, we now derive a more practical representation of the $m$-th Lipschitz-Killing curvature $\LK_m$ of the excursion set in $B^d_N$.

We define $\kappa_m:=\mathcal{H}^m(B^m_1)$, $m\in \N$, and consider for $\varepsilon >0$ the mapping
\begin{align*}
  \delta_\varepsilon \colon \R^{d}\to\R, \quad  x\mapsto \frac{1}{\varepsilon^{d-m}\kappa_{d-m}}\ind_{B^{d}_\varepsilon}(x),
\end{align*}
which is a Dirac sequence for $\varepsilon \to 0$ on every $(d-m)$-dimensional linear subspace $E$ of $\R^d$, that is, for each continuous mapping $f\colon E\to \R$, we have
\begin{align*}
  \lim_{\varepsilon \to 0} \int_{E} \delta_\varepsilon(x) f(x) \,dx= f(0).
\end{align*}

We apply the Crofton formula in \cite [Thm. 13.1.1]{book:AdlerTaylor}, to obtain
\begin{align*}
  \LK_m\left (B^d_N\cap X^{-1}([u,\infty))\right)&=\LK_m\left (\overline{B^d_N}\cap X^{-1}([u,\infty))\right )-\LK_m\left (S^{d-1}_N\cap X^{-1}([u,\infty))\right)\\
  &=\int_{A(d,d-m)}\LK_0\left (\overline{B^d_N}\cap X^{-1}([u,\infty))\cap F\right ) \,\mu(dF)\\
  &\quad-\int_{A(d,d-m)}\LK_0\left (S^{d-1}_N\cap X^{-1}([u,\infty))\cap F\right) \,\mu(dF)\\
  &=\int_{A(d,d-m)}\LK_0\left (B^d_N\cap X^{-1}([u,\infty))\cap F\right) \,\mu(dF).
\end{align*}
By the assumptions made, we know that the trajectories of $X$ are almost surely Morse functions on $B^d_N\cap F$, for $\mu$ almost all $F$, cf. \cite [Definition 9.3.1]{book:AdlerTaylor} and Lemma \ref{lem:pre}. Therefore, restricting the integration to a suitable subset $A'\subset A(d,d-m)$ as provided by Lemma \ref{lem:pre}, we can apply \cite [Cor. 9.3.5]{book:AdlerTaylor} to the above integrand, to see that
\begin{align*}
  \LK_0&(B^d_N\cap X^{-1}([u,\infty))\cap F) \\
  &= \#\{t\in B^d_N\cap F:X(t)\geq u, \nabla (X|_F)(t)=0,\iota_{-X,B^d_N\cap F}(t) \text{ even}\}\\
  &\quad- \#\{t\in B^d_N\cap F:X(t)\geq u, \nabla (X|_F)(t)=0,\iota_{-X,B^d_N\cap F}(t) \text{ odd}\},
\end{align*}
where $\iota$ denotes the tangential Morse index, cf. \cite [(9.1.2)]{book:AdlerTaylor}. Later computations will benefit from a more general definition in which we define the latter random variable for a bounded, convex window $W\subset \R^d$, and thus define
\begin{align}\label{def:zeta}
  \zeta_{m,W}&:=\int_{A(d,d-m)}\#\{t\in W\cap F:X(t)\geq u, \nabla (X|_F)(t)=0,\iota_{-X,W\cap F}(t) \text{ even}\}\notag\\
  &\quad- \#\{t\in W\cap F:X(t)\geq u, \nabla (X|_F)(t)=0,\iota_{-X,W\cap F}(t) \text{ odd}\}\,\mu(dF).
\end{align}
Motivated by the use of a Dirac sequence to approximate these counting variables, cf. \cite [Lemma 11.2.10]{book:AdlerTaylor}, we introduce the approximation
\begin{align}\label{def:zetaEpsilon}
  \zeta^\varepsilon_{m,W} := (-1)^{d-m}\int_{A(d,d-m)} \int_{W\cap F} \delta_\varepsilon(\nabla (X|_F)(t))\ind\{X(t)\geq u\}\det(D^2(X|_F)(t)) \,dt\, \mu(dF)
\end{align}
and now specify the quality of this approximation. We first need the following Lemma, whose proof is postponed to the appendix:

\begin{lemma}\label{lem:hilfL^2}
  Let $D\subset\R^d$ be compact, assume \ref{as:dif} and \ref{as:nondegenerate} and let $W\subset \R^d$ be convex and bounded. Then the following is true:
   \begin{enumerate}[label={\normalfont(\roman*)}]
     \item There is a constant $c>0$, depending on $X$, $d$, $m$, and $W$, such that for $F\in A(d,d-m)$ and $y\in F^\circ\cap D $
           \begin{align*}
              \E\left[\# \{t\in W \cap F \colon \nabla (X|_F)(t)=y\}^2 \right]< c.
           \end{align*}
     \item For all $F\in A(d,d-m)$ the mapping
           \begin{align*}
             y\mapsto \E\left[ \# \{t\in W \cap F \colon \nabla (X|_F)(t)=y\}^2\right]
           \end{align*}
           is continuous on $F^\circ \cap D$.
     \item For all $F\in A(d,d-m)$, we have
           \begin{align*}
             \xi_W(F&,\varepsilon)\overset{L^2(\mathbb{P})}{\underset{\varepsilon \to 0}{\longrightarrow}} \xi_W(F),
           \end{align*}
           where
           \begin{align*}
             \xi_W(F,\varepsilon)&:=(-1)^{d-m}\int_{W\cap F} \delta_\varepsilon(\nabla (X|_F)(t))\ind\{X(t)\geq u\}\det(D^2(X|_F)(t))\, dt,\\
             \xi_W(F)&:=\#\{t\in W\cap F:X(t)\geq u, \nabla (X|_F)(t)=0,\iota_{-X,W\cap F}(t) \mathrm{\, even}\}\\
             &\quad- \#\{t\in W\cap F:X(t)\geq u, \nabla (X|_F)(t)=0,\iota_{-X,W\cap F}(t) \mathrm{\, odd}\}.
           \end{align*}
   \end{enumerate}
\end{lemma}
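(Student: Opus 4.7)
My plan is to prove all three parts via the classical Kac--Rice / Rice formula machinery. By stationarity and isotropy \ref{as:dif}, I may translate so that $F$ passes through the origin and rotate so that $F^\circ$ equals a fixed coordinate $(d-m)$-flat; the joint law of $(X,\nabla_{v(F)}X, D^2_{v(F)}X)$ sampled at any finite collection of points then no longer depends on $F$, so a bound obtained in this reduced situation will yield the uniformity in $F$ claimed in (i). Throughout, write $Z_F := \nabla_{v(F)} X$ and $N(y) := \#\{t \in W \cap F : Z_F(t) = y\}$.

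For (i), I apply the second-factorial-moment Rice formula
\begin{align*}
\E[N(y)^2] = \E[N(y)] + \int_{(W\cap F)^2} p_{(Z_F(t_1), Z_F(t_2))}(y,y)\, \E\bigl[|\det D^2_{v(F)}X(t_1)\,\det D^2_{v(F)}X(t_2)| \bigm| Z_F(t_i)=y \bigr]\,dt_1\,dt_2,
\end{align*}
whose validity under \ref{as:dif}, \ref{as:nondegenerate} is standard, cf.\ \cite[Ch.\ 6]{book:AzaisWschebor}. The first-moment term is bounded using \ref{as:nondegenerate} at a single point. For the double integral I split $(W\cap F)^2$ into $\{\|t_1-t_2\|\le\eta\}$ and $\{\|t_1-t_2\|>\eta\}$. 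Away from the diagonal, \ref{as:covariance} gives a uniform integrable bound on both the joint density of $(Z_F(t_1),Z_F(t_2))$ and on the conditional moment. Near the diagonal, \ref{as:nondegenerate} ensures that the Gaussian vector $\bigl(Z_F(t_1),\,(Z_F(t_2)-Z_F(t_1))/\|t_2-t_1\|\bigr)$ has covariance bounded below by a fixed positive definite matrix as $t_2\to t_1$; combining this with a Taylor expansion of $Z_F(t_2)$ at $t_1$ and the rescaling $t_2\mapsto(t_2-t_1)/\|t_2-t_1\|$ tames the apparent singularity of the joint density at $(y,y)$. Part (ii) follows from the very same formula: the integrand is continuous in $y$ (conditioning a jointly Gaussian system on one of its components produces only a $y$-linear mean shift), and the bound from (i) provides the dominating function for dominated convergence.

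For (iii), by Lemma~\ref{lem:pre} the trajectory $X|_F$ is almost surely a Morse function on $W\cap F$ with no critical points on $\partial(W\cap F)$. At each of the finitely many critical points, $Z_F$ is a local diffeomorphism by \ref{as:nondegenerate}, so for all $\varepsilon$ small enough (depending on the sample path) the change-of-variables / coarea identity
\begin{align*}
\xi_W(F,\varepsilon) = (-1)^{d-m}\int_{F^\circ}\delta_\varepsilon(y)\sum_{t \in Z_F^{-1}(y)\cap W} \ind\{X(t)\ge u\}\,\mathrm{sign}\bigl(\det D^2_{v(F)}X(t)\bigr)\,dy
\end{align*}
holds, and the inner signed sum is continuous in $y$ at $y=0$ where it equals $\xi_W(F)$, via the Morse identity $\mathrm{sign}(\det(-D^2(X|_F)(t)))=(-1)^{\iota_{-X,W\cap F}(t)}$. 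Hence $\xi_W(F,\varepsilon)\to \xi_W(F)$ almost surely. To upgrade to $L^2$ convergence I invoke Vitali: expanding $\E[\xi_W(F,\varepsilon)^2]$ by Fubini yields an integral of the joint density of $(Z_F(t_1),Z_F(t_2))$ at $(y_1,y_2)$ tested against $\delta_\varepsilon(y_1)\delta_\varepsilon(y_2)$ and a conditional Hessian moment, which is uniformly bounded in $\varepsilon$ by the very estimates of (i), yielding uniform integrability of $\{\xi_W(F,\varepsilon)^2\}_\varepsilon$. The main obstacle throughout is the quantitative near-diagonal Kac--Rice estimate supporting (i), uniformly in $F\in A(d,d-m)$ and $y\in F^\circ\cap D$; once this is in hand, (ii) and (iii) reduce to continuity of Gaussian conditional expectations, dominated convergence, and Vitali's theorem.
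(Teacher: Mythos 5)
Your overall route coincides with the paper's: Rice formulas for the first and second factorial moments with the isotropy reduction giving uniformity in $F$, a near-/off-diagonal split for (i), Gaussian regression plus dominated convergence for (ii), and the Morse-theoretic almost sure identification for (iii). However, two steps as written would fail. In (i), you attribute the control of the near-diagonal contribution to a taming of the joint density alone. After your rescaling the density of $(Z_F(t_1),Z_F(t_2))$ at $(y,y)$ is of exact order $\|t_1-t_2\|^{-(d-m)}$, and $\int_{\|t\|\le\eta}\|t\|^{-(d-m)}\,dt$ diverges logarithmically in $\R^{d-m}$; so if the conditional Hessian moment is merely bounded near the diagonal, the double integral is infinite. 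The estimate only closes because $\E\bigl[|\det D^2_{v(F)}X(t_1)\det D^2_{v(F)}X(t_2)|\bigm|Z_F(t_1)=Z_F(t_2)=y\bigr]=O(\|t_1-t_2\|^{2})$: conditionally on the gradient taking the same value at the two points, a Taylor/Rolle argument forces the second derivative in the direction $t_2-t_1$ to be $O(\|t_1-t_2\|)$, and Hadamard's inequality extracts one such factor from each determinant. This decay, proved uniformly in $F$ and with polynomially controlled dependence on $\|y\|$, is the real content of the paper's Lemmata \ref{lem:boundDensity} and \ref{lem:boundExp} (note also that the conditioning matrix $C_2(F,t)^{-1}$ itself blows up like $\|t_1-t_2\|^{-2}$, so even boundedness of the conditional expectation is not free). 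Your sketch names a Taylor expansion but attaches it to the density, and never states the needed vanishing of the conditional moment.

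In (iii), the upgrade from almost sure to $L^2$ convergence is incomplete: a uniform bound $\sup_\varepsilon\E[\xi_W(F,\varepsilon)^2]<\infty$ gives uniform integrability of $\{\xi_W(F,\varepsilon)\}$, not of $\{\xi_W(F,\varepsilon)^2\}$, so Vitali only yields $L^1$ convergence (the family $\varepsilon^{-1/2}\ind_{[0,\varepsilon]}$ on $[0,1]$ shows the implication you invoke is false). What is needed is either a uniformly bounded $(2+\delta)$-moment or, as the paper does, \emph{convergence} of the second moments of the dominating variables $G_\varepsilon:=\int_{W\cap F}\delta_\varepsilon(\nabla(X|_F))|\det D^2(X|_F)|\,dt$: Fatou gives $\liminf_\varepsilon\E[G_\varepsilon^2]\ge\E[N(0)^2]$, while the coarea formula, Jensen's inequality with respect to $\delta_\varepsilon(y)\,dy$, and crucially the continuity statement (ii) give $\limsup_\varepsilon\E[G_\varepsilon^2]\le\lim_\varepsilon\int\E[N(y)^2]\delta_\varepsilon(y)\,dy=\E[N(0)^2]$. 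Combined with the almost sure convergence $G_\varepsilon\to N(0)$ this yields $L^2$ convergence of $G_\varepsilon$, hence uniform integrability of $\{G_\varepsilon^2\}$ and of the dominated family $\{\xi_W(F,\varepsilon)^2\}$, after which Vitali (equivalently, the generalized dominated convergence theorem, \cite[Theorem 1.20]{book:EvansGariepy}) applies. Observe that part (ii) enters part (iii) in an essential way here, whereas your proposal never uses it.
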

We now show that the approximation $\zeta_{m,W}^\varepsilon$ is indeed an approximation of the variable of interest $\zeta_{m,W}$.
\begin{lemma}\label{lem:L^2approx}
  Let $(X_t)_{t\in\R^d}$ be a real-valued Gaussian field satisfying \ref{as:dif} and \ref{as:nondegenerate} and let $W\subset\R^d$ be convex and bounded. Then
  \begin{align*}
    \zeta^\varepsilon_{m,W} \stackrel{L^2(\mathbb{P})}{\longrightarrow} \zeta_{m,W}
  \end{align*}
  for $\varepsilon \to 0$, where $\zeta_{m,W}$ and $\zeta_{m,W}^\varepsilon$ are defined by \eqref{def:zeta} and \eqref{def:zetaEpsilon}, respectively.
\end{lemma}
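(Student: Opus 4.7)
The plan is to reduce the claim to the pointwise $L^2(\mathbb{P})$ convergence supplied by Lemma~\ref{lem:hilfL^2}(iii) via a dominated convergence argument on the affine Grassmannian. Since $W$ is bounded, Crofton's formula applied to a ball containing $W$ shows that the set $A_W := \{F \in A(d,d-m) : F \cap W \neq \emptyset\}$ has finite $\mu$-measure. Both $\xi_W(F,\varepsilon)$ and $\xi_W(F)$ vanish outside $A_W$, so by Fubini and the Cauchy--Schwarz inequality in the $\mu$-integral,
\begin{align*}
  \E\bigl[(\zeta^\varepsilon_{m,W} - \zeta_{m,W})^2\bigr] \leq \mu(A_W) \int_{A_W} \E\bigl[(\xi_W(F,\varepsilon) - \xi_W(F))^2\bigr] \,\mu(dF),
\end{align*}
and Lemma~\ref{lem:hilfL^2}(iii) makes the integrand tend to $0$ for every fixed $F$. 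What remains is to produce a $\mu$-integrable majorant.

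For this I would establish a uniform bound $\E[\xi_W(F,\varepsilon)^2] \leq c$ for $F \in A_W$ and $\varepsilon \in (0,1]$, with $c$ independent of $F$ and $\varepsilon$. The key tool is the area formula applied to $\nabla(X|_F)$, viewed in the coordinates $v(F)$ as a $\mathcal{C}^2$ map between $(d-m)$-dimensional spaces whose Jacobian determinant equals $\det(D^2(X|_F)(t))$: for any nonnegative test function $\phi$,
\begin{align*}
  \int_{W \cap F} \phi(\nabla(X|_F)(t)) |\det(D^2(X|_F)(t))| \, dt = \int_{F^\circ} \phi(y) \, \#\{t \in W \cap F : \nabla(X|_F)(t) = y\} \, dy.
\end{align*}
Taking $\phi = \delta_\varepsilon$, noting that $\delta_\varepsilon$ restricted to $F^\circ$ is $\frac{1}{\varepsilon^{d-m}\kappa_{d-m}} \ind_{B^{d-m}_\varepsilon}$, then squaring and applying Cauchy--Schwarz in $y$ over a ball of volume $\varepsilon^{d-m}\kappa_{d-m}$ yields
\begin{align*}
  \xi_W(F,\varepsilon)^2 \leq \frac{1}{\varepsilon^{d-m}\kappa_{d-m}} \int_{B^{d-m}_\varepsilon} \#\{t \in W \cap F : \nabla(X|_F)(t) = y\}^2 \, dy.
\end{align*}
Taking expectations and invoking Lemma~\ref{lem:hilfL^2}(i) with $D = \overline{B^d_1}$, the factors $\varepsilon^{d-m}\kappa_{d-m}$ cancel and $\E[\xi_W(F,\varepsilon)^2] \leq c$. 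The analogous bound for $\E[\xi_W(F)^2]$ follows either from a direct Kac--Rice computation or from Fatou's lemma combined with Lemma~\ref{lem:hilfL^2}(iii). Consequently $\E[(\xi_W(F,\varepsilon) - \xi_W(F))^2] \leq 4c$, and dominated convergence on the finite-measure space $(A_W, \mu)$ concludes the proof.

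The main obstacle is precisely this uniform $\varepsilon$-bound: the blowing-up factor $\varepsilon^{-(d-m)}$ in $\delta_\varepsilon$ rules out any naive domination, and one really has to exploit that the delta sequence is paired with the Jacobian $\det(D^2(X|_F))$ so that the area formula converts $1/\varepsilon^{d-m}$ into an average over $B^{d-m}_\varepsilon$ that Lemma~\ref{lem:hilfL^2}(i) controls uniformly in the base point. Once this cancellation is in place, the rest is bookkeeping.
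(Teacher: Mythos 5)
Your argument is correct and follows essentially the same route as the paper: reduce to the fibrewise convergence of Lemma \ref{lem:hilfL^2}(iii) by Jensen/Cauchy--Schwarz and Fubini over the finite-measure set $\{F : F\cap W\neq\emptyset\}$, then obtain an $\varepsilon$-uniform majorant by pairing $\delta_\varepsilon$ with the Jacobian via the (co)area formula and applying Jensen with respect to the normalized measure $\delta_\varepsilon(y)\,\ind\{y\in F^\circ\}\,dy$ so that Lemma \ref{lem:hilfL^2}(i) applies uniformly over $y\in B^{d-m}_\varepsilon$ for $\varepsilon\leq 1$. The only cosmetic difference is that the paper bounds $\E[\xi_W(F)^2]$ directly by $\E[\#\{t\in W\cap F:\nabla(X|_F)(t)=0\}^2]$ and Lemma \ref{lem:hilfL^2}(i) at $y=0$, rather than via Fatou.
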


\begin{proof}
  By Jensen's inequality and Fubini's theorem
  \begin{align*}
    \E\left[ \left (\zeta_{m,W}-\zeta^\varepsilon_{m,W}\right)^2\right] &\leq c \E\left[ \int_{A(d,d-m)}\left ( \xi_W(F)-\xi_W(F,\varepsilon)\right)^2\,\mu(dF)\right]\\
    &=c \int_{A(d,d-m)}\E\left[\left ( \xi_W(F)-\xi_W(F,\varepsilon)\right)^2\right]\, \mu(dF),
  \end{align*}
  where $c=\mu(\{F:F\cap W\neq \emptyset\})\leq {d \brack d-m } \on{diam}(W)^m\kappa_m$, cf. \cite [(6.3.12)]{book:AdlerTaylor} for the definition of the flag coefficients.
  Thus, if we justify changing the order of the limits $\lim_{\varepsilon \to 0}$ and $\int_{A(d,d-m)}$, we are done by Lemma \ref{lem:hilfL^2} (iii). In order to apply the dominated convergence theorem, we bound the integrand by an integrable function, not depending on $\varepsilon$. Observe that
  \begin{align*}
    \E\left[\left ( \xi_W(F)-\xi_W(F,\varepsilon)\right)^2\right]&\leq 2 \E\left[ \#\{t\in W\cap F : \nabla(X|_F)(t)=0\}^2\right] \\
    &\quad +2 \E\left[ \left(\int_{W\cap F}\delta_\varepsilon(\nabla (X|_F)(t))|\det(D^2(X|_F)(t))| \,dt\right)^2\right].
  \end{align*}
  For the first term Lemma \ref{lem:hilfL^2} (i) yields
  \begin{align*}
    \E\left[ \#\{t\in W \cap F : \nabla(X|_F)(t)=0\}^2\right] \leq c \ind\{F\cap W \neq \emptyset \},
  \end{align*}
  where $c>0$ is a constant depending on $X$,$d$, $m$ and $W$. For the second term, we apply the coarea formula to $\nabla(X|_F)$, cf. \cite [Theorem 3.2.12]{book:Federer}, then Jensen's inequality to the measure $\ind\{y\in F^\circ\}\delta_\varepsilon(y) \mathcal{H}^{d-m}(dy)$ followed by Fubini's theorem, to obtain
  \begin{align*}
    \E&\left[ \left(\int_{W\cap F}\delta_\varepsilon(\nabla (X|_F)(t))|\det(D^2(X|_F)(t))| \,dt\right)^2\right] \\
    &\leq \int_{F^\circ} \E\left[\#\{t\in W\cap F : \nabla(X|_F)(t)=y\}^2\right]\delta_\varepsilon(y)\,dy.
  \end{align*}
  Again by Lemma \ref{lem:hilfL^2} (i), we can bound this for all $\varepsilon \leq1$ by the expression
  \begin{align*}
    c \int_{F^\circ} \delta_{\varepsilon}(y) \,dy \ind\{F\cap W \neq \emptyset \} = c\ind\{F\cap W \neq \emptyset \}.
  \end{align*}
  Both bounds are independent of $\varepsilon$ and integrable with respect to $\mu$, which shows the assertion. \qedhere
\end{proof}

Before we move on with the main proof, we show the following lemma to obtain
a more concrete representation of $\zeta_{m,W}^\varepsilon$. We note that the special choice of the orthonormal basis $v(F)$ of $F^\circ$, for $F\in A(d,d-m)$, is irrelevant.
\begin{lemma} \label{lem:repcoord}
  Let $\varepsilon >0$, $W\subset \R^d$ be convex and bounded and assume \ref{as:dif}. Then
  \begin{align*}
  \zeta_{m,W}^\varepsilon= (-1)^{d-m}\int_{G(d,d-m)}\int_{W}&\delta_\varepsilon(\nabla_{v(F)}X(t)) \ind\{X(t)\geq u\} \det\left(D^2_{v(F)}X(t)\right)\,dt\, \nu(dF),
  \end{align*}
  where $\nabla_{v(F)}$ and $D^2_{v(F)}$ are defined in \eqref{def:nablaInF} and \eqref{def:D2InF}, respectively.
\end{lemma}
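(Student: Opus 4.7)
My plan is to recognize the identity as the disintegration of the motion-invariant measure $\mu$ on $A(d,d-m)$ over the projection $F\mapsto F^\circ$, combined with Fubini, once the integrand has been rewritten in $v(F)$-coordinates. Every $F\in A(d,d-m)$ is uniquely of the form $F=E+y$ with $E=F^\circ\in G(d,d-m)$ and $y\in E^\perp$, and the standard disintegration reads
\begin{align*}
  \int_{A(d,d-m)} g(F)\, \mu(dF) = \int_{G(d,d-m)}\int_{E^\perp} g(E+y)\, \mathcal{H}^m(dy)\, \nu(dE),
\end{align*}
which I would take from \cite[Section 13.1]{book:AdlerTaylor} together with the normalization of $\nu$ fixed earlier in the paper.

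Before applying it, I verify that $\delta_\varepsilon(\nabla(X|_F)(t))\det(D^2(X|_F)(t))$ coincides with $\delta_\varepsilon(\nabla_{v(F)}X(t))\det(D^2_{v(F)}X(t))$. Since $v(F)=(v_1,\ldots,v_{d-m})$ is orthonormal, the coordinate map $F^\circ\to\R^{d-m}$ sending $\sum a_iv_i$ to $(a_i)$ is a Euclidean isometry; hence the norm of the intrinsic gradient $\nabla(X|_F)(t)\in F^\circ$ agrees with the norm of its coefficient vector $\nabla_{v(F)}X(t)\in\R^{d-m}$, and $\delta_\varepsilon$ (an indicator of a ball times a constant) takes the same value on both. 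Likewise, as the paper notes just after \eqref{def:D2InF}, the intrinsic Hessian $D^2(X|_F)(t)$ viewed as an endomorphism of the subspace $F^\circ$ is represented in the basis $v(F)$ by the matrix $D^2_{v(F)}X(t)$, so the two determinants agree. In particular, the resulting integrand depends on $F$ only through $E$ and on $t$, not on the shift $y$, which is the crucial structural feature that makes the disintegration work.

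I then insert the disintegration of $\mu$ and, for each fixed $E\in G(d,d-m)$, use the orthogonal decomposition $\R^d=E\oplus E^\perp$ under which Lebesgue measure factors as $dt=\mathcal{H}^{d-m}_E(ds)\otimes\mathcal{H}^m_{E^\perp}(dy)$. Since $\delta_\varepsilon$ is bounded, $W$ is bounded and the Hessian is continuous under \ref{as:dif}, Fubini is applicable and yields
\begin{align*}
  \int_{E^\perp}\int_{W\cap(E+y)} h_E(t)\, dt\, \mathcal{H}^m(dy) = \int_W h_E(t)\, dt,
\end{align*}
with $h_E(t)=\delta_\varepsilon(\nabla_{v(E)}X(t))\ind\{X(t)\geq u\}\det(D^2_{v(E)}X(t))$. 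Combining the two steps delivers the claim. I expect the main subtlety to be bookkeeping rather than analysis: one must check that the disintegration carries exactly the normalization of $\mu$ used in \eqref{def:zeta}--\eqref{def:zetaEpsilon}, so that no spurious factors of $\omega_{d-m}$ or $\kappa_{d-m}$ creep in, and reconfirm that the integrand is independent of the choice of orthonormal basis $v(F)$, as already remarked in the paper between \eqref{def:nablaInF} and \eqref{def:D2InF}.
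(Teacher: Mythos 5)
Your proposal is correct and follows essentially the same route as the paper: first identify $\delta_\varepsilon(\nabla(X|_F))\det(D^2(X|_F))$ with $\delta_\varepsilon(\nabla_{v(F)}X)\det(D^2_{v(F)}X)$ via orthonormality of $v(F)$, then decompose $\mu$ as $\nu\otimes\mathcal{H}^m$ over $G(d,d-m)\times F^{\circ\perp}$ and use Fubini to recombine $\mathcal{H}^{d-m}\otimes\mathcal{H}^m$ into $\mathcal{H}^d$ on $W$. The only cosmetic difference is that the paper phrases the translation step explicitly as a change of variables $t\mapsto t+y$ inside $W\cap(L+y)$, while you absorb it into the statement of the disintegration; the substance is identical.
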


\begin{proof}
  Recall, that by definition $\nabla(f|_F)(t)=\sum_{i=1}^{d-m} \frac{\partial}{\partial v_i} f(t) v_i$ and therefore the rotation invariance of $\delta_\varepsilon$ yields
  \begin{align*}
    \delta_\varepsilon(\nabla(X|_F))= \frac{1}{\varepsilon^{d-m}\kappa_{d-m}} \ind_{B^d_\varepsilon}(\nabla(X|_F))=\delta_\varepsilon(\nabla_{v(F)}X).
  \end{align*}
  Also by definition $D^2(X|_F)(t) = \left (\begin{array}{@{}c|c|c@{}}
      v_1 & \cdots & v_{d-m} \\
      \end{array}\right ) \left(\frac{\partial^2}{\partial v_i\partial v_j}X(t)\right)_{1\leq i,j\leq d-m}\left (\begin{array}{@{}c|c|c@{}}
      v_1 & \cdots & v_{d-m} \\
      \end{array}\right )^\top$
  so that, as a linear mapping from $F^\circ$ into $F^\circ$ it has the transformation matrix $\left(\frac{\partial^2}{\partial v_i\partial v_j}X(t)\right)_{i,j=1}^{d-m}$ with respect to the chosen basis, and therefore we have
  \begin{align*}
    \det(D^2(X|_F))= \det\left(D^2_{v(F)}X\right).
  \end{align*}
  This yields with definition \eqref{def:zetaEpsilon}
  \begin{align*}
    \zeta_{m,W}^\varepsilon= (-1)^{d-m}\int_{A(d,d-m)}\int_{W\cap F}&\delta_\varepsilon(\nabla_{v(F)}X(t))\ind\{X(t)\geq u\} \det\left(D^2_{v(F)}X(t)\right)\,dt\, \mu(dF)
  \end{align*}
  and we conclude by an application of Fubini's Theorem
  \begin{align*}
    \zeta_{m,W}^\varepsilon&= (-1)^{d-m}\int_{G(d,d-m)}\int_{L^\perp}\int_{W\cap (L+y)}\delta_\varepsilon(\nabla_{v(L+y)}X(t))\ind\{X(t)\geq u\} \\
    &\hspace{4.8cm}\times \det\left(D^2_{v(L+y)}X(t)\right)\mathcal{H}^{d-m}(dt)\,\mathcal{H}^m(dy) \,\nu(dL)\\
    &=(-1)^{d-m}\int_{G(d,d-m)}\int_{L^\perp}\int_{L}\ind\{t+y\in W\}\delta_\varepsilon(\nabla_{v(L)}X(t+y))\ind\{X(t+y)\geq u\} \\
    &\hspace{4.8cm}\times \det\left(D^2_{v(L)}X(t+y)\right)\,\mathcal{H}^{d-m}(dt)\,\mathcal{H}^m(dy) \,\nu(dL)\\
    &=(-1)^{d-m}\int_{G(d,d-m)}\int_{W}\delta_\varepsilon(\nabla_{v(F)}X(t))\ind\{X(t)\geq u\} \det\left(D^2_{v(F)}X(t)\right)\,\mathcal{H}^{d}(dt) \,\nu(dF).\qedhere
  \end{align*}
\end{proof}

\section{Hermite type expansion}
 From now on, let the field $X$ satisfy the assumptions \ref{as:dif}--\ref{as:covariance}.
 We begin this section by defining for $D:=d-m+(d-m)(d-m+1)/2+1$ the $\R^{D}$-valued Gaussian random field $(\mathcal{X}^F_t\colon \Omega \to \R^D\mid(F,t)\in G(d,d-m)\times\R^d)$ by
\begin{align*}
  \mathcal{X}^F(t):=\left(\nabla_{v(F)}X(t),\left(\frac{\partial^2}{\partial v_i \partial v_j} X(t)\right)_{1\leq i \leq j \leq d-m}, X(t)\right)
\end{align*}
and denote by $\Sigma$ the covariance matrix of $\mathcal{X}^F(t)$, $(F,t)\in G(d,d-m)\times\R^d$. We note that the definition depends on the choice of $v(F)$, but considering Lemma \ref{lem:repcoord}, this does not matter.  We formulate the following lemma.
\begin{lemma}\label{lem:decom}
  The matrix $\Sigma$ is independent of $t\in \R^d$ and $F\in G(d,d-m)$. Moreover, we have $\Sigma=\Lambda\Lambda^\top$, where $\Lambda\in\on{GL}_D(\R)$ is given by $\Lambda=\begin{pmatrix}
                                                                     I_{d-m \times d-m} & 0 \\
                                                                     0 & \Lambda_2 \\
                                                                   \end{pmatrix} $,
  for some lower triangular matrix $\Lambda_2 \in GL_{D-(d-m)}(\R)$.
\end{lemma}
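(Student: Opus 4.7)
My plan is to split the lemma into three steps: invariance of $\Sigma$ in $t$, invariance in $F$ (and in the choice of basis $v(F)$), and the block factorization $\Sigma = \Lambda \Lambda^\top$. Each step uses a different ingredient of \ref{as:dif}--\ref{as:nondegenerate}.

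For the first invariance, if $X$ is stationary then so is any partial or directional derivative of $X$, since differentiation commutes with translation. Hence $\mathcal{X}^F = (\mathcal{X}^F(t))_{t \in \R^d}$ is itself a stationary $\R^D$-valued Gaussian field for every fixed $F \in G(d,d-m)$, so $\Sigma = \on{Cov}(\mathcal{X}^F(t))$ does not depend on $t$. For invariance in $F$ and in the choice of the orthonormal frame $v(F)$, I would pick $R \in O(d)$ sending a second frame $v'$ to $v$; since $X \circ R \stackrel{d}{=} X$ by isotropy, the chain rule identifies the directional derivatives of $X$ along $v'$ with those of $X \circ R$ along $v$, leaving the covariance structure unchanged.

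Having reduced to a single pair $(t, F)$, I would compute $\Sigma$ entrywise at $t = 0$ using $\Co^X(0) = 1$ and $-D^2 \Co^X(0) = I_d$ from \ref{as:dif}. The first-derivative block satisfies $\on{Cov}(\partial_{v_i} X(0), \partial_{v_j} X(0)) = -\langle v_i, D^2\Co^X(0) v_j\rangle = \delta_{ij}$ by orthonormality, so it is $I_{d-m}$. The cross-covariances $\on{Cov}(\partial_{v_i} X(0), X(0))$ and $\on{Cov}(\partial_{v_i} X(0), \partial_{v_j}\partial_{v_k} X(0))$ are first- and third-order derivatives of $\Co^X$ at the origin, and both vanish since $\Co^X$ is even. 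This yields the block form
\begin{align*}
\Sigma = \begin{pmatrix} I_{d-m} & 0 \\ 0 & \Sigma_2 \end{pmatrix},
\end{align*}
where $\Sigma_2$ is the covariance matrix of $\bigl((\partial_{v_i}\partial_{v_j} X(0))_{1 \leq i \leq j \leq d-m}, X(0)\bigr)$.

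To finish I need $\Sigma_2$ positive definite and then apply Cholesky. By \ref{as:nondegenerate} at any $t \neq 0$, the covariance of $(X(t), \nabla X(t), D^2 X(t))$ is positive definite as a principal submatrix of a full-rank matrix, and stationarity transfers this to $t = 0$. Using isotropy as in the second step, I may assume $v_i = e_i$ for $i = 1,\ldots,d-m$; then $\mathcal{X}^F(0)$ is a coordinate subvector of $(X(0), \nabla X(0), D^2 X(0))$, so $\Sigma$, and hence its principal submatrix $\Sigma_2$, inherits positive definiteness. The Cholesky factorization then supplies a unique lower triangular $\Lambda_2 \in \on{GL}_{D-(d-m)}(\R)$ with positive diagonal such that $\Sigma_2 = \Lambda_2 \Lambda_2^\top$, and setting $\Lambda := \on{diag}(I_{d-m}, \Lambda_2)$ gives $\Sigma = \Lambda \Lambda^\top$ as required. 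I do not anticipate a serious obstacle; the only delicate point is carefully extracting single-point nondegeneracy from the slightly awkward formulation of \ref{as:nondegenerate}, which carries an extra copy of $\nabla X(0)$ at a different point, and making sure the isotropy reduction is applied cleanly so that $\mathcal{X}^F(0)$ indeed appears as a coordinate subvector.
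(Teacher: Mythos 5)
Your proposal is correct and follows essentially the same route as the paper: stationarity and isotropy give independence of $t$ and $F$, the conditions $\Co^X(0)=1$, $-D^2\Co^X(0)=I_d$ and the vanishing of odd-order derivatives of the covariance at the origin give the block structure, positive definiteness is extracted from \ref{as:nondegenerate}, and Cholesky finishes the argument. The only difference is that you spell out in more detail how single-point nondegeneracy follows from the two-point formulation of \ref{as:nondegenerate}, which the paper leaves implicit.
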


\begin{proof}
  By assumption \ref{as:dif} on the random field $X$, we obtain from \cite [(5.5.3), (5.7.3)]{book:AdlerTaylor} and isotropy
  \begin{align}\label{eq:covFirstDer}
    &\E\left[\frac{\partial}{\partial v_i}X(t)\frac{\partial}{\partial v_j}X(t)\right]=\E\left[\frac{\partial}{\partial t_i}X(0)\frac{\partial}{\partial t_j}X(0)\right]=\delta_{ij},\\
    &\E\left[\frac{\partial}{\partial v_i}X(t)\frac{\partial^2}{\partial v_k\partial v_l}X(t)\right]= \E\left[\frac{\partial}{\partial t_i}X(0)\frac{\partial^2}{\partial t_k\partial t_l}X(0)\right]=0,\notag\\
    &\E\left[\frac{\partial}{\partial v_i}X(t)X(t)\right]=\E\left[\frac{\partial}{\partial t_i}X(0)X(0)\right]=0\notag,
  \end{align}
  as well as
  \begin{align*}
    &\E\left[\frac{\partial^2}{\partial v_i\partial v_j}X(t)\frac{\partial^2}{\partial v_k\partial v_l}X(t)\right]= \E\left[\frac{\partial^2}{\partial t_i\partial t_j}X(0)\frac{\partial^2}{\partial t_k\partial t_l}X(0)\right],\\
    &\E\left[\frac{\partial^2}{\partial v_i\partial v_j}X(t)X(t)\right]=\E\left[\frac{\partial^2}{\partial t_i\partial t_j}X(0)X(0)\right],\\
    &\E\left[X(t)X(t)\right]=\E\left[X(0)X(0)\right].
  \end{align*}
  Assumption \ref{as:nondegenerate} yields that $\Sigma$ is positive definite. Hence  the well-known Cholesky decomposition, cf. \cite [Fact 8.9.37]{book:Bernstein}, yields the assertion. \qedhere
\end{proof}

Using $\Lambda$, we define the decorrelated process
\begin{align}\label{def:Y}
  Y^F(t):=\Lambda^{-1}\mathcal{X}^F(t), \quad t\in \R^d, F\in G(d,d-m).
\end{align}
For fixed $t\in \R^d$ and $F\in G(d,d-m)$, the random vector $Y^F(t)$ is standard normal, i.e. $Y^F(t)\sim \mathcal{N}(0,I_{D\times D})$. However, note that for different $t,s\in \R^d$ the vectors $Y^F(t)$ and $Y^F(s)$ are in general not independent. In what follows we shall be using the stationarity
\begin{align*}
  \left (Y^F(t),Y^{F'}(t')\right )=\left ( Y^F(t+h),Y^{F'}(t'+h) \right),
\end{align*}
where $t,t',h \in \R^d$ and $F,F'\in G(d,d-m)$. Indeed, we have for suitable mappings $f^F$ and $f^{F'}$ that
\begin{align*}
(Y^F&(t),Y^{F'}(t'))\\
  &=(f^F(\nabla X(t),D^2X(t),X(t)),f^{F'}(\nabla X(t'),D^2X(t'),X(t')))\\
  &\stackrel{\mathcal{D}}{=}(f^F(\nabla X(t+h),D^2X(t+h),X(t+h)),f^{F'}(\nabla X(t'+h),D^2X(t'+h),X(t'+h)))\\
  &=(Y^F(t+h),Y^{F'}(t'+h)).
\end{align*}

We now define the mapping $G_\varepsilon\colon\R^{d-m}\times \R^{(d-m)(d-m+1)/2+1} \to \R $, where we use the notation $(x)_{i_1,\ldots,i_k}:=(x_{i_1},\ldots, x_{i_k})$, by
\begin{align*}
  G_{\varepsilon}(x,y):= (-1)^{d-m}\delta_{\varepsilon}(x) \det\Big(\big(\Lambda_2y\big)_{1,\ldots,(d-m)(d-m+1)/2} \Big)\ind\{\big(\Lambda_2y\big)_{(d-m)(d-m+1)/2+1}\geq u\},
\end{align*}
so that, by Lemma \ref{lem:repcoord}, we can rewrite the random variable $\zeta^\varepsilon_{m,W}$ as
\begin{align*}
  \zeta^\varepsilon _{m,W} = \int_{G(d,d-m)}\int_{W}G_\varepsilon(Y^F(t))\, dt\, \mu(dF).
\end{align*}
In the above definition the vector $\big(\Lambda_2y\big)_{1,\ldots,(d-m)(d-m+1)/2}$
is identified with the symmetric $(d-m)\times(d-m)$-matrix, whose diagonal and upper diagonal entries are given by $\left(\Lambda_2y\right)_{1,\ldots,(d-m)(d-m+1)/2}$, according to the way one identifies $\left(\frac{\partial^2}{\partial v_i \partial v_j} X(t)\right)_{1\leq i \leq j \leq d-m}$ with a vector.
Moreover the mapping $G_\varepsilon$ is an element of $L^2(\R^D,\phi_D\lambda^D)$, where $\phi_D$ denotes the density of a $D$-dimensional standard normal distribution and $\lambda^D$ the $D$-dimensional Lebesgue measure, and therefore can be expanded in the orthonormal basis $\{n!^{-1/2}\widetilde H_n:n\in \N^D\}$, where $\widetilde H_n:=\otimes_{i=1}^DH_{n_i}$ and $H_{k}(x):=(-1)^ke^{\frac{x^2}{2}}\frac{\partial^k}{\partial x^k}e^{-\frac{x^2}{2}}$, $k\in \N\setminus \{0\}$ and $H_0=1$, cf. \cite [Proposition 1.4.2 (iv)]{book:NourdinPeccati}, \cite [Example E.9]{book:Janson}. Thus we  obtain
\begin{align}\label{eq:L^2approxG}
  G_\varepsilon = \sum_{q=0}^\infty\sum_{n\in \N^D,|n|=q} c(G_\varepsilon,n)\widetilde H_n,
\end{align}
in $L^2(\phi_D\lambda^D)$, where
\begin{align}\label{def:c(Gvarepsilon,n)}
c(G_\varepsilon,n)&:=n!^{-1}\int_{\R^D}G_\varepsilon(x)\widetilde H_n(x) \phi_D(x)\, dx\notag\\
  &=\frac{(-1)^{d-m}}{\prod_{i=1}^{d} n_i!}\int_{\R^{d-m}}\delta_\varepsilon(x)\prod_{i=1}^{d-m} H_{n_i}(x) \phi_{d-m}(x)\,dx\int_{\R^{D-(d-m)}}\ind\{(\Lambda_2y)_{D-(d-m)}\geq u\} \notag\\
  &\quad \times\det\big((\Lambda_2y)_{1,\ldots,(d-m)(d-m+1)/2} \big)\prod_{i=d-m+1}^D H_{n_i}(y) \phi_{D-(d-m)}(y)\,dy.
\end{align}
It is this expansion, which helps to establish an expansion of the random variable $\zeta^\varepsilon_{m,W}$, as is shown in the next lemma.

\begin{lemma}\label{lem:l^2approx}
  Let $\varepsilon>0$ and $W\subset \R^d$ be bounded and convex. Then
  \begin{align*}
    \zeta^\varepsilon_{m,W}=\sum_{q\geq 0} \sum_{n\in \N^D,|n|=q} \int_{G(d,d-m)}c(G_{\varepsilon},n)\int_{W}\widetilde H_n(Y^F(t))\,dt\, \nu(dF),
  \end{align*}
  where the convergence is in $L^2(\mathbb{P})$.
\end{lemma}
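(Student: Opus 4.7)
The plan is to truncate the Hermite expansion \eqref{eq:L^2approxG} of $G_\varepsilon$ at chaos order $Q$, integrate this truncation against $dt\,\nu(dF)$, and transfer the resulting $L^2(\phi_D\lambda^D)$ convergence to $L^2(\mathbb{P})$ convergence by exploiting that each vector $Y^F(t)$ is marginally $\mathcal{N}(0,I_D)$.

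Concretely, I would introduce
\begin{align*}
G_\varepsilon^{(Q)}&:=\sum_{q=0}^Q\sum_{n\in\N^D,\,|n|=q}c(G_\varepsilon,n)\widetilde H_n,\\
\zeta^{\varepsilon,(Q)}_{m,W}&:=\int_{G(d,d-m)}\int_W G_\varepsilon^{(Q)}(Y^F(t))\,dt\,\nu(dF).
\end{align*}
This is a finite linear combination of integrals whose integrands $\widetilde H_n(Y^F(t))$ are Hermite polynomials in Gaussian random variables (hence possess all moments), and the product measure on $G(d,d-m)\times W$ is finite. Fubini's theorem therefore justifies exchanging the finite sum with the double integral, rewriting $\zeta^{\varepsilon,(Q)}_{m,W}$ in the form claimed on the right-hand side of the lemma, truncated at $Q$. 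It then suffices to prove $\zeta^{\varepsilon,(Q)}_{m,W}\to\zeta^\varepsilon_{m,W}$ in $L^2(\mathbb{P})$ as $Q\to\infty$.

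Starting from the representation $\zeta^\varepsilon_{m,W}=\int_{G(d,d-m)}\int_W G_\varepsilon(Y^F(t))\,dt\,\nu(dF)$ recorded just before the lemma, Jensen's inequality applied with respect to the finite measure $\nu\otimes\mathcal{H}^d|_W$, followed by Fubini, yields
\begin{align*}
\E\bigl[(\zeta^\varepsilon_{m,W}-\zeta^{\varepsilon,(Q)}_{m,W})^2\bigr]\leq M\int_{G(d,d-m)}\int_W \E\bigl[(G_\varepsilon-G_\varepsilon^{(Q)})(Y^F(t))^2\bigr]\,dt\,\nu(dF),
\end{align*}
with $M:=\nu(G(d,d-m))\,\mathcal{H}^d(W)<\infty$. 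By Lemma \ref{lem:decom} and definition \eqref{def:Y}, $Y^F(t)\sim\mathcal{N}(0,I_D)$ for every $(F,t)$, so the inner expectation equals the $(F,t)$-independent quantity $\|G_\varepsilon-G_\varepsilon^{(Q)}\|^2_{L^2(\phi_D\lambda^D)}$. The right-hand side is therefore bounded by $M^2\|G_\varepsilon-G_\varepsilon^{(Q)}\|^2_{L^2(\phi_D\lambda^D)}$, which tends to $0$ by \eqref{eq:L^2approxG}.

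The only delicate point is the initial Fubini/measurability bookkeeping: one has to verify that $(F,t,\omega)\mapsto G_\varepsilon(Y^F(t)(\omega))$ and its truncated versions are jointly measurable, and that $G_\varepsilon(Y^F(t))$ is square integrable uniformly in $(F,t)$. The uniform $L^2$ estimate is immediate from $Y^F(t)\sim\mathcal{N}(0,I_D)$ together with the already noted membership $G_\varepsilon\in L^2(\phi_D\lambda^D)$; joint measurability follows from the continuity of a local choice of basis $F\mapsto v(F)$ on chart neighbourhoods of the Grassmannian. Once these hypotheses are in place, the entire convergence argument collapses to the three-line estimate above.
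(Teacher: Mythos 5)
Your argument is correct and is essentially identical to the paper's own proof: both truncate the Hermite expansion at chaos order $Q$, apply Jensen's inequality with the same constant $\nu(G(d,d-m))\,\mathcal{H}^d(W)$, use Fubini and the fact that $Y^F(t)\sim\mathcal{N}(0,I_D)$ to reduce the $L^2(\mathbb{P})$ distance to $\|G_\varepsilon-G_\varepsilon^{(Q)}\|^2_{L^2(\phi_D\lambda^D)}$, and conclude from \eqref{eq:L^2approxG}. Your additional remarks on joint measurability are a minor refinement the paper leaves implicit.
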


\begin{proof}
  The right side is in $L^2(\mathbb{P})$ since it is a Cauchy sequence, which can be seen by Jensen's inequality and \eqref{eq:L^2approxG}. Recall that by Lemma \ref{lem:repcoord}
  \begin{align*}
    \zeta^\varepsilon_{m,W}&=\int_{G(d,d-m)}\int_{W}G_\varepsilon(Y^F(t))\,dt\, \nu(dF),
  \end{align*}
  thus for $Q\in\N$ we have that
  \begin{align*}
    \E\bigg[ &\bigg( \zeta_{m,W}^\varepsilon- \int_{G(d,d-m)}\int_{W}\sum_{q= 0}^Q \sum_{n\in \N^D,|n|=q}c(G_\varepsilon,n)\widetilde H_n(Y^F(t))\,dt\, \nu(dF)\bigg)^2\bigg]\\
    &=\E\bigg[ \bigg( \int_{G(d,d-m)}\int_{W}G_\varepsilon(Y^F(t))-\sum_{q= 0}^Q \sum_{n\in \N^D,|n|=q} c(G_\varepsilon,n)\widetilde H_n(Y^F(t))\,dt\, \nu(dF)\bigg)^2\bigg]\\
    &\leq c\E\bigg[ \int_{G(d,d-m)}\int_{W}\bigg (G_\varepsilon(Y^F(t))-\sum_{q= 0}^Q \sum_{n\in \N^D,|n|=q}c(G_\varepsilon,n)\widetilde H_n(Y^F(t))\bigg)^2 \,dt\, \nu(dF)\bigg],
  \end{align*}
  where we used Jensen's inequality in the last step and $c=\binom{d}{d-m}\frac{\omega_{d}}{\omega_{m}\omega_{d-m}}\mathcal{H}^d(W)$. By Fubini's theorem the latter term equals
  \begin{align*}
    &c \int_{G(d,d-m)}\int_{W}\E\bigg[\bigg(G_\varepsilon(Y^F(t))-\sum_{q= 0}^Q \sum_{n\in \N^D,|n|=q}c(G_\varepsilon,n)\widetilde H_n(Y^F(t))\bigg)^2\bigg] \,dt\, \nu(dF)\\
    &=c \int_{G(d,d-m)}\int_{W}\int_{\R^D}\bigg(G_\varepsilon(x)-\sum_{q= 0}^Q \sum_{n\in \N^D,|n|=q}c(G_\varepsilon,n)\widetilde H_n(x)\bigg)^2 \phi_D(x) \,dx \,dt\, \nu(dF)\\
    &=c^2\int_{\R^D}\bigg(G_\varepsilon(x)-\sum_{q= 0}^Q \sum_{n\in \N^D,|n|=q}c(G_\varepsilon,n)\widetilde H_n(x)\bigg)^2 \phi_D(x) \,dx.
  \end{align*}
  Hence, by (\ref{eq:L^2approxG}), we conclude
  \begin{align*}
    c^2\int_{\R^D}\bigg(G_\varepsilon(x)-\sum_{q= 0}^Q \sum_{n\in \N^D,|n|=q}c(G_\varepsilon,n)\widetilde H_n(x)\bigg)^2 \phi_D(x)\, dx \stackrel{Q\to \infty}{\longrightarrow }0,
  \end{align*}
  which shows the assertion. \qedhere
\end{proof}

The following lemma  is a special case of \cite [Lemma 3.2]{paper:Taqqu}. We give a prove for completeness.
\begin{lemma} \label{lem:generalisedMehlersFormula}
  Let $F,F'\in G(d,d-m), t,t'\in \R^d$ and $n,n'\in \N^D$. Then
  \begin{align*}
    \E[\widetilde H_n(Y^F(t))\widetilde H_{n'}(Y^{F'}(t'))]=\sum_{\substack{d\in \N^{D\times D}, \\ \sum_{i=1}^D d_{ij}=n_j\,,\,\sum_{j=1}^D d_{i j}=n'_i}} n!n'! \prod_{1\leq i,j \leq D}\frac{\E\left[ Y_i^F(t)Y_j^{F'}(t')\right]^{d_{ij}}}{d_{ij}!}
  \end{align*}
  for $|n|=|n'|$ and for $|n|\neq |n'|$
  \begin{align*}
    \E[\widetilde H_n(Y^F(t))\widetilde H_{n'}(Y^{F'}(t'))]=0.
  \end{align*}
\end{lemma}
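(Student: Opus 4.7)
The plan is to derive both statements from the multivariate generating function of the Hermite polynomials. Recall the classical identity $\sum_{k\geq 0} \frac{s^k}{k!} H_k(x) = e^{sx - s^2/2}$, which tensorizes to
\begin{align*}
  \sum_{n\in\N^D} \frac{s^n}{n!}\widetilde H_n(y) = \exp\Bigl(\sum_{i=1}^D s_i y_i - \tfrac{1}{2}\sum_{i=1}^D s_i^2\Bigr), \qquad s,y\in \R^D,
\end{align*}
where $s^n := \prod_i s_i^{n_i}$ and $n! := \prod_i n_i!$.

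First I would evaluate this generating function at $y=Y^F(t)$ with a formal variable $s$ and at $y=Y^{F'}(t')$ with a second variable $s'$, multiply the two, and take the expectation. By Lemma \ref{lem:decom} the pair $(Y^F(t),Y^{F'}(t'))$ is a centered Gaussian vector with standard normal marginals, so $\sum_i s_i Y_i^F(t) + \sum_j s'_j Y_j^{F'}(t')$ is centered Gaussian with variance $\|s\|^2+\|s'\|^2 + 2\sum_{i,j}s_is'_j \rho_{ij}$, where $\rho_{ij}:=\E[Y_i^F(t)Y_j^{F'}(t')]$. The Gaussian moment generating function then yields
\begin{align*}
  \E\Bigl[\exp\bigl({\textstyle\sum_i} s_i Y_i^F(t) - \tfrac12\|s\|^2\bigr)\exp\bigl({\textstyle\sum_j} s'_j Y_j^{F'}(t') - \tfrac12\|s'\|^2\bigr)\Bigr] = \exp\Bigl(\sum_{i,j} s_i s'_j \rho_{ij}\Bigr).
\end{align*}

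Second I would expand the right-hand side as a product of scalar exponentials,
\begin{align*}
  \exp\Bigl(\sum_{i,j}s_is'_j \rho_{ij}\Bigr) = \prod_{i,j}\sum_{d_{ij}\geq 0}\frac{(s_is'_j\rho_{ij})^{d_{ij}}}{d_{ij}!} = \sum_{d\in\N^{D\times D}} \prod_{i,j}\frac{\rho_{ij}^{d_{ij}}}{d_{ij}!}\prod_i s_i^{\sum_j d_{ij}}\prod_j {s'_j}^{\sum_i d_{ij}},
\end{align*}
and compare with the double series obtained from the left-hand side, namely $\sum_{n,n'}\frac{s^n {s'}^{n'}}{n!\,n'!}\,\E[\widetilde H_n(Y^F(t))\widetilde H_{n'}(Y^{F'}(t'))]$. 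Matching the coefficient of $s^n {s'}^{n'}$ on both sides and multiplying by $n!\,n'!$ gives the claimed identity, the side conditions on the matrix $d$ being precisely that its row and column sums reproduce $n$ and $n'$. The vanishing statement for $|n|\neq|n'|$ falls out of the same comparison: every monomial $s_is'_j$ on the right has equal total $s$- and $s'$-degree, so only contributions with $|n|=|n'|$ can survive.

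The argument is a standard derivation of the multivariate Mehler formula and there is no genuine obstacle. The only point requiring care is the interchange of expectation and power series expansion, which is justified for $(s,s')$ in a small neighbourhood of the origin where the Gaussian moment generating function bound makes the double series absolutely integrable; Taylor coefficients can then be identified unambiguously term by term.
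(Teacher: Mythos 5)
Your proposal is correct and follows essentially the same route as the paper: both multiply the tensorized Hermite generating functions $\exp(\sum_i s_i y_i - \tfrac12\|s\|^2)$ evaluated at the two Gaussian vectors, compute the expectation via the joint Gaussian moment generating function, expand $\exp(\sum_{i,j}s_is'_j\rho_{ij})$ (the paper uses the multinomial theorem, you use the product of scalar exponentials — the same computation), and compare coefficients, with the degree-matching observation giving the vanishing for $|n|\neq|n'|$. The only cosmetic difference is that the paper justifies the interchange of expectation and summation by citing Lemma 3.1 of Taqqu, where you invoke absolute integrability near the origin; both are adequate.
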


\begin{proof}
  We first proof the following: Let $V,W$ be two $D$-dimensional random vectors where $(V,W)\sim \mathcal{N}_{2D}\left(0,\begin{pmatrix}
                                       I_{D} & (\E\left[ V_iW_j\right])_{1\leq i,j\leq D} \\
                                       (\E\left[W_i V_j\right])_{1\leq i,j\leq D} & I_{D} \\
                                     \end{pmatrix}
  \right)$. Then
  \begin{align*}
    \E\left[ \widetilde H_n(V)\widetilde H_{n'}(W)\right]=\ind\{|n|=|n'|\}\sum_{\substack{d\in \N^{D\times D}\\ \sum_{i=1}^D d_{ij}=n_j\,,\,\sum_{j=1}^D d_{i j}=n'_i}} n!n'! \prod_{1\leq i,j \leq D}\frac{\E\left[ V_iW_j\right]^{d_{ij}}}{d_{ij}!}.
  \end{align*}
   Observe that via the moment generating function of a multivariate normal distribution, we obtain for $t\in \R^{2D}$
  \begin{align}
    \E&\left[ \prod_{i=1}^D\exp(t_iV_i-\frac{1}{2}t_i^2)\prod_{i=D+1}^{2D}\exp(t_iW_{i-D}-\frac{1}{2}t_i^2)\right] =\exp\left(\sum_{i,j=1}^D t_i t_{D+j}\E\left[ V_iW_j\right]\right).\label{eq:VW}
  \end{align}
  We use the identity $\exp(tx-1/2t^2)=\sum_{q=0}^\infty t^q/q! H_q(x)$ to see the equality of the left side in (\ref{eq:VW}) to
  \begin{align*}
    \sum_{n_1,\ldots ,n_D,n'_1,\ldots,n'_{D}=0}^\infty \frac{t_1^{n_1}\ldots t_D^{n_D}t_1^{n'_1}\ldots t^{n'_D}_D}{n!n'!}\E\left[ \widetilde H_{n}(V)\widetilde H_{n'}(W)\right],
  \end{align*}
  where we used \cite [Lemma 3.1]{paper:Taqqu} to change the order of summation and expectation. The right side in (\ref{eq:VW}) equals
  \begin{align*}
    &\sum_{r=0}^\infty \frac{1}{r!}\left(\sum_{i,j=1}^Dt_it_{D+j}\E\left[ V_iW_j\right]\right)^r
    \\&=\sum_{r=0}^\infty\sum_{d\in\N^{D\times D}, \sum_{i,j=1}^{D} d_{ij}=r} \prod_{1\leq i,j\leq D}\frac{1}{d_{ij}!}(t_it_{D+j})^{d_{ij}}\E\left[ V_iW_j\right]^{d_{ij}}\\
    &=\sum_{r=0}^\infty\sum_{d\in\N^{D\times D}, \sum_{i,j=1}^{D} d_{ij}=r} \prod_{1\leq i,j\leq D}\left(\frac{\E\left[ V_iW_j\right]^{d_{ij}}}{d_{ij}!}\right) t_1^{\sum_{k=1}^D d_{k1}}\ldots t_D^{\sum_{k=1}^D d_{kD}}t_{D+1}^{\sum_{k=1}^D d_{1k}}\ldots t_{2D}^{\sum_{k=1}^D d_{Dk}},
  \end{align*}
  by the multinomial theorem in the first line. Note that the sum over the exponents of the variables  $t_1,\ldots,t_D$ equals the one over the exponents of variables $t_{D+1},\ldots ,t_{2D}$, i.e.
    $\sum_{i=1}^D \sum_{j=1}^Dd_{ji}= \sum_{i=1}^D \sum_{j=1}^Dd_{ij} = r.$
  Hence by comparing the coefficients, we obtain for $|n|\neq |n'|$
  \begin{align*}
    \E\left[ \widetilde H_{n}(V)\widetilde H_{n'}(W)\right] =0,
  \end{align*}
  and furthermore for $|n|=|n'|$, the monomial of degree $(n,n')$ corresponds to $r=\frac{1}{2}(|n|+|n'|)$ and can therefore be found in a unique term of the sum over $r$, which yields the assertion.

  To conclude the lemma, note that the process $(Y^F(t))_{(F,t)\in G(d,d-m)\times \R^d}$ is Gaussian and the vector $Y^F(t)$ is standard normal for fixed $(F,t)\in G(d,d-m)\times \R^d$. \qedhere
\end{proof}

Using the last lemmata, we can now give a Hermite type expansion of the $m$-th Lipschitz-Killing curvature of the excursion set in the ball of radius N, namely $\zeta_{m,N}$. We first define
\begin{align}\label{def:coefc}
    c(n)&:=(2\pi)^{-(d-m)/2}\prod_{i=1}^{d-m}\frac{H_{n_i}(0)}{n_i!}\frac{(-1)^{d-m}}{\prod_{i=d-m+1}^Dn_i!} \int_{\R^{D-(d-m)}}\det\big((\Lambda_2y)_{1,\ldots,(d-m)(d-m+1)/2} \big) \notag\\
  &\quad\times\ind\{(\Lambda_2y)_{(d-m)(d-m+1)/2+1}\geq u\}\prod_{i=d-m+1}^D H_{n_i}(y) \phi_{D-(d-m)}(y)\,dy.
\end{align}
Since $\frac{1}{\prod_{i=1}^{d-m} n_i!}\int_{\R^{d-m}}\delta_\varepsilon(x)\prod_{i=1}^{d-m} H_{n_i}(x) \phi_{d-m}(x)\,dx  \stackrel{\varepsilon\to 0}{\longrightarrow} (2\pi)^{-(d-m)/2}\prod_{i=1}^{d-m}\frac{H_{n_i}(0)}{ n_i!}$ we obtain $c(n)=\lim_{\varepsilon\to 0} c(G_\varepsilon,n)$. The coefficient $c(\cdot)$ is the coefficient in the expansion of $\zeta_{m,W}$ as we see in the next lemma. Note that, the following expansion is orthogonal due to the last lemma.
\begin{theorem}\label{thm:L^2Approx}
  Let $W\subset \R^d$ be convex and bounded. Then
  \begin{align}
    \zeta_{m,W}\stackrel{L^2(\mathbb{P})}{=}\sum_{q\geq 0} \sum_{n\in \N^D,|n|=q} \int_{G(d,d-m)}c(n)\int_{W}\widetilde H_n(Y^F(t))\,dt\, \nu(dF). \label{eq:L^2Approx}
  \end{align}
\end{theorem}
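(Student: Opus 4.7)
The plan is to combine the two $L^2$-approximation results already in place and then pass to the limit $\varepsilon \to 0$ chaos by chaos. By Lemma~\ref{lem:L^2approx}, $\zeta_{m,W}^\varepsilon \to \zeta_{m,W}$ in $L^2(\mathbb{P})$ as $\varepsilon \to 0$, while Lemma~\ref{lem:l^2approx} furnishes the expansion $\zeta_{m,W}^\varepsilon = \sum_{q \geq 0} I_q^\varepsilon$ in $L^2(\mathbb{P})$, where
\begin{align*}
I_q^\varepsilon := \sum_{n \in \N^D,\,|n|=q} \int_{G(d,d-m)} c(G_\varepsilon,n) \int_W \widetilde H_n(Y^F(t))\,dt\,\nu(dF).
\end{align*}
Writing $I_q$ for the analogous quantity with $c(n)$ in place of $c(G_\varepsilen,n)$, the goal becomes $\zeta_{m,W} = \sum_{q \geq 0} I_q$ in $L^2(\mathbb{P})$.

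First, I would verify that $I_q^\varepsilon$ and $I_q$ lie in the $q$-th Wiener chaos $\mathcal{H}_q$ of the Gaussian Hilbert space generated by $X$. For each fixed $(F,t)$, the components of $Y^F(t) = \Lambda^{-1}\mathcal{X}^F(t)$ are deterministic linear combinations of $X$ and its first and second partial derivatives at $t$, so they lie in the first chaos of $X$; moreover, Lemma~\ref{lem:decom} ensures that they are jointly independent standard Gaussians. Hence the tensorised Hermite polynomial $\widetilde H_n(Y^F(t))$ with $|n|=q$ belongs to $\mathcal{H}_q$. Since $\mathcal{H}_q$ is closed in $L^2(\mathbb{P})$, the same is true of the finite-measure integrals $\int_{G(d,d-m)} \int_W \widetilde H_n(Y^F(t))\,dt\,\nu(dF)$ and their finite linear combinations over $|n|=q$. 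Lemma~\ref{lem:generalisedMehlersFormula} then guarantees mutual orthogonality across different $q$, both for the $I_q^\varepsilon$ and for the $I_q$.

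Second, I would show that $I_q^\varepsilon \to I_q$ in $L^2(\mathbb{P})$ as $\varepsilon \to 0$ for each fixed $q$. The index set $\{n \in \N^D : |n|=q\}$ is finite, each random variable $Z_n := \int_{G(d,d-m)} \int_W \widetilde H_n(Y^F(t))\,dt\,\nu(dF)$ lies in $L^2(\mathbb{P})$ by Jensen's inequality combined with $\E[\widetilde H_n(Y^F(t))^2] = n!$ and the finiteness of $\mathcal{H}^d(W)$ and $\nu(G(d,d-m))$, and the scalar convergence $c(G_\varepsilon,n) \to c(n)$ is noted just before the theorem. A finite linear combination with convergent scalar coefficients then converges in $L^2(\mathbb{P})$.

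Finally, I would identify the limit via chaos projection. Let $P_q \colon L^2(\mathbb{P}) \to \mathcal{H}_q$ denote the orthogonal projection. The chaos-membership and orthogonality in step one, together with the expansion of Lemma~\ref{lem:l^2approx}, give $P_q(\zeta_{m,W}^\varepsilon) = I_q^\varepsilon$. Continuity of $P_q$ in $L^2$ and Lemma~\ref{lem:L^2approx} then yield $I_q^\varepsilon \to P_q(\zeta_{m,W})$, which combined with step two forces $P_q(\zeta_{m,W}) = I_q$. Since $\zeta_{m,W}$ is an $L^2$-functional of the Gaussian field $X$, the Wiener--Itô chaos decomposition gives $\zeta_{m,W} = \sum_{q \geq 0} P_q(\zeta_{m,W}) = \sum_{q \geq 0} I_q$ in $L^2(\mathbb{P})$, which is \eqref{eq:L^2Approx}. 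The main delicate point is the chaos-membership claim $I_q^\varepsilon \in \mathcal{H}_q$, since it underpins the identification $P_q(\zeta_{m,W}^\varepsilon) = I_q^\varepsilon$ on which the limit argument rests; this in turn is exactly what the orthonormal structure of $Y^F(t)$ provided by Lemma~\ref{lem:decom} is designed to supply.
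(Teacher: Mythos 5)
Your argument is correct, and it rests on the same ingredients as the paper's proof --- Lemmata \ref{lem:L^2approx} and \ref{lem:l^2approx}, the coefficient convergence $c(G_\varepsilon,n)\to c(n)$, and orthogonality across chaoses --- but it is organized along a genuinely cleaner route. The paper first shows by hand, via Fatou's lemma, the orthogonality of Lemma \ref{lem:generalisedMehlersFormula} and the bound $\E[\zeta_{m,W}^2]<\infty$ from Lemma \ref{lem:hilfL^2}\,(i), that the partial sums of the right-hand side of \eqref{eq:L^2Approx} form a Cauchy sequence, and then runs a three-term triangle inequality with the projections $\pi^Q$ and $\pi_Q$. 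You instead make the chaos membership of your $I_q^\varepsilon$ explicit (the components of $Y^F(t)$ are orthonormal first-chaos elements by Lemma \ref{lem:decom}, so $\widetilde H_n(Y^F(t))\in\mathcal{H}_q$ for $|n|=q$, and $\mathcal{H}_q$ is closed under Bochner integration against the finite measure on $G(d,d-m)\times W$), deduce $P_q(\zeta_{m,W}^\varepsilon)=I_q^\varepsilon$, pass to the limit $\varepsilon\to 0$ chaos by chaos, and let the abstract Wiener--It\^o decomposition sum the series. This buys you the convergence of $\sum_q I_q$ for free, since $\sum_q\|P_q(\zeta_{m,W})\|_{L^2}^2=\|\zeta_{m,W}\|_{L^2}^2<\infty$ once you know $\zeta_{m,W}\in L^2(\mathbb{P})$ (immediate from Lemma \ref{lem:L^2approx}) and that $\zeta_{m,W}$ is measurable with respect to the $\sigma$-field generated by the Gaussian field and its derivatives --- a hypothesis of the decomposition theorem that you should state explicitly, as it is what licenses $\zeta_{m,W}=\sum_q P_q(\zeta_{m,W})$. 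The only technical debts you leave implicit are the joint measurability of $(F,t)\mapsto\widetilde H_n(Y^F(t))$ as an $L^2$-valued map (needed so the Bochner integral stays in the closed subspace $\mathcal{H}_q$) and a measurable choice of the frames $v(F)$; both are harmless and are equally glossed over in the paper.
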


\begin{proof}
  We show that $(\sum_{q=0}^Q\sum_{|n|=q}\int_{G(d,d-m)}c(n)\int_{W}\widetilde H_n(Y^F(t))dt \nu(dF))_{Q\in \N}$ is a Cauchy sequence, so that the right side of the asserted equation is in $L^2(\mathbb{P})$. For $Q_1<Q_2 \in \N$ we have
  \begin{align*}
    \E[&\big( \sum_{q=Q_1}^{Q_2}\sum_{|n|=q}\int_{G(d,d-m)}c(n)\int_{W}\widetilde H_n(Y^F(t))\,dt\, \nu(dF) \big)^2]
    \\&\leq\liminf_{\varepsilon \to 0}\E[\big( \sum_{q=Q_1}^{Q_2}\sum_{|n|=q}\int_{G(d,d-m)}c(G_\varepsilon,n)\int_{W}\widetilde H_n(Y^F(t))\,dt \,\nu(dF) \big)^2]
  \end{align*}
  by Fatou's lemma. The orthogonality of Lemma \ref{lem:generalisedMehlersFormula} yields equality to
  \begin{align}\label{eq:CauchySeq}
    \liminf_{\varepsilon \to 0}&\sum_{q=Q_1}^{Q_2}\E[\big( \sum_{|n|=q}\int_{G(d,d-m)}c(G_\varepsilon,n)\int_{W}\widetilde H_n(Y^F(t))\,dt \,\nu(dF)\big)^2]
    \notag\\&\leq\sum_{q=Q_1}^{\infty}\liminf_{\varepsilon \to 0}\E[\big( \sum_{|n|=q}\int_{G(d,d-m)}c(G_\varepsilon,n)\int_{W}\widetilde H_n(Y^F(t))\,dt\, \nu(dF)\big)^2],
  \end{align}
  where we added positive terms in the second line. Note that, in order to use the orthogonality we need Fubini's theorem, which is applicable as a consequence of \cite [Lemma 3.1]{paper:Taqqu}.
  By Fatou's lemma and the Pythagorean identity the latter is bounded from above by
  \begin{align*}
    \liminf_{\varepsilon \to 0}\E[\big(\sum_{q=0}^{\infty} \sum_{|n|=q}\int_{G(d,d-m)}c(G_\varepsilon,n)\int_{W}\widetilde H_n(Y^F(t))\,dt\, \nu(dF)\big)^2]
    &=\liminf_{\varepsilon \to 0} \E[(\zeta_{m,W}^\varepsilon)^2]\\
    &=\E[(\zeta_{m,W})^2]<\infty,
  \end{align*}
  where we have used Lemma \ref{lem:l^2approx}  and finally Lemma \ref{lem:hilfL^2} (i). Thus \eqref{eq:CauchySeq} is the tail of a convergent series, which yields that the sequence is Cauchy.

  Now define $\widetilde I_q:=\sum_{n\in\N^D,|n|=q}\int_{G(d,d-m)}c(n)\int_{W}\widetilde H_n(Y^F(t))\,dt \,\nu(dF)$ and write $\pi^Q(f)$ for the projection onto the first $Q$ chaos in $L^2(\mathbb{P})$ and likewise $\pi_Q(f)$ for the projection onto the chaos greater than $Q$, $Q\in \N$, $f\in L^2(\mathbb{P})$. To show the asserted equality, observe that
  \begin{align*}
    \|\zeta_{m,W}-\sum_{q=0}^\infty \widetilde I_{q}\|_{L^2}&\leq \|\pi_Q(\zeta_{m,W})-\sum_{q=Q}^\infty\widetilde I_{q}\|_{L^2}+\|\pi^Q(\zeta_{m,W}-\zeta^\varepsilon_{m,W})\|_{L^2}+ \|\pi^Q(\zeta^\varepsilon_{m,W})-\sum_{q=0}^Q\widetilde I_{q}\|_{L^2}\\
    &\leq \|\pi_Q(\zeta_{m,W})\|_{L^2} + \|\sum_{q=Q}^\infty\widetilde I_{q}\|_{L^2} + \|\zeta_{m,W}-\zeta^\varepsilon_{m,W}\|_{L^2}+ \|\pi^Q(\zeta^\varepsilon_{m,W})-\sum_{q=0}^Q\widetilde I_{q}\|_{L^2}.
  \end{align*}
  The first two terms tend to 0 for $Q\to \infty$, since both functions belong to $L^2(\mathbb{P})$, as does the third one for $\varepsilon \to 0$, due to Lemma \ref{lem:L^2approx}. For the last one we have
  \begin{align*}
    \|\pi^Q(\zeta^\varepsilon_{m,W})-\sum_{q=0}^Q\widetilde I_{q}\|_{L^2}&=\E[\big(\sum_{q=0}^Q\sum_{|n|=q}\int_{G(d,d-m)}c(G_\varepsilon,n)\int_{W}\widetilde H_n(Y^F(t))\,dt \,\nu(dF)\\
    &\quad-\sum_{q=0}^Q\sum_{|n|=q}\int_{G(d,d-m)}\lim_{\varepsilon \to 0}c(G_\varepsilon,n)\int_{W}\widetilde H_n(Y^F(t))\,dt \,\nu(dF) \big)^2],
  \end{align*}
  which equals
  \begin{align*}
   &\sum_{q,q'=0}^Q\sum_{|n|=q}\sum_{|n'|=q'}\left (c(G_\varepsilon,n)-\lim_{\varepsilon \to 0}c(G_\varepsilon,n)\right)\left (c(G_\varepsilon,n')-\lim_{\varepsilon \to 0}c(G_\varepsilon,n')\right)\\
    &\quad \times \E[\int_{G(d,d-m)}\int_{W}\widetilde H_n(Y^F(t))\,dt\, \nu(dF)\int_{G(d,d-m)}\int_{W}\widetilde H_{n'}(Y^F(t))\,dt\, \nu(dF)].
  \end{align*}
  The assertion follows by first taking the limit $\varepsilon \to 0$ and then $Q\to \infty$. \qedhere
\end{proof}

\section{Embedding into an isonormal process}
We now embed the Gaussian field $(Y^F_t\colon \Omega \to \R^D \mid (F,t)\in G(d,d-m)\times \R^d)$ into an isonormal process. By standard theory, for instance in \cite [Section 5.4]{book:AdlerTaylor}, we obtain for $s,t\in \R^d$
\begin{align*}
  \on{Cov}^X(s,t)=\on{Cov}^X(s-t)=\int_{\R^d}e^{i\langle s-t, \lambda \rangle} \,f\lambda^d(d\lambda),
\end{align*}
where $f\lambda^d$ denotes the spectral measure of $X$ and $f$ the spectral density. Recall that the spectral density exists due to \ref{as:covariance}. Moreover we obtain
\begin{align}\label{eq:covDif}
  \E&\left[ \frac{\partial^k}{\partial v_{1}\ldots\partial v_{k}} X (t)\frac{\partial^l}{\partial v'_{1}\ldots\partial v'_{l}} X (s)\right]=(-1)^l \int_{\R^d}\frac{\partial^{(k+l)}}{\partial v_{1}\ldots\partial v_{k}\partial v'_{1}\ldots\partial v'_{l}}\left(e^{i\langle \cdot, \lambda \rangle}\right)(s-t)\,f\lambda^d(d\lambda),
\end{align}
where $k,l\in \{0,1,2\}$,  $v_1,\ldots, v_{k},v'_1,\dots, v'_l\in S^{d-1}$. We define the real  Hilbert space \begin{align*}
  \mathfrak{H}:=\{h\colon \R^d\to \C\mid h(-x)=\overline{h(x)}\}
\end{align*}
equipped with the scalarproduct $\langle f,g\rangle_{L^2(f\lambda^d)}:=\int_{\R^d}f(\lambda)\overline{g(\lambda)}\,f\lambda^d(d\lambda)$, which is real since the functions are Hermitian and $f\lambda^d$ is symmetric. By \cite [Prop. 2.1.1]{book:NourdinPeccati}, we know that there exists an isonormal process $W$ on $\mathfrak{H}$, so that for $f,g\in \mathfrak{H}$
\begin{align}\label{eq:isoProcess}
  \E\left[ W(f)W(g)\right]=\langle f,g\rangle_{L^2(f\lambda^d)}.
\end{align}

Moreover we define for $F\in G(d,d-m)$ and $j=1,\ldots,D$ the mapping
\begin{align*}
  \varphi^F_{t,j}\colon \R^d \to \C, \lambda \mapsto \sum_{k=1}^D \Lambda_{jk}^{-1}\nu_{k}^F(\lambda)e^{i\langle t, \lambda \rangle} \in \mathfrak{H},
\end{align*}
where
\begin{align*}
  \nu^F\colon \R^d\to \C^D, \lambda \mapsto \big((i\langle v_l,\lambda\rangle)_{1\leq l \leq d-m},(-\langle v_l,\lambda\rangle \langle v_s,\lambda\rangle)_{1\leq l\leq s\leq d-m},1\big )
\end{align*}
and $v_1,\ldots, v_{d-m}$ denotes the chosen orthonormal basis of $F$. Note that $\nu_k^F(\lambda)e^{i\langle \cdot, \lambda \rangle}$ is the directional derivative of $e^{i\langle \cdot, \lambda \rangle}$ of the same order and in the same direction as the derivative of $X$ in the $k$-th component of $\mathcal{X}^F$.

Then we obtain
\begin{align*}
  Y^\cdot(\cdot\cdot)\stackrel{\mathcal{D}}{=} \left(W(\varphi^\cdot_{\cdot\cdot,1}),\ldots, W(\varphi^\cdot_{\cdot\cdot,D})\right)
\end{align*}
as processes on $G(d,d-m)\times \R^d$. To see this, it suffices to show that their covariance structures coincide, since both processes are centered Gaussian processes. By the definition of $Y$, cf. \eqref{def:Y}, and \eqref{eq:covDif}
\begin{align}\label{eq:covY}
  \E[Y^F_i(t)Y^{F'}_j(t')]&= \sum_{r,s=1}^{D}\Lambda_{ir}^{-1}\Lambda_{js}^{-1} \E[\mathcal{X}_{r}^F(t)\mathcal{X}_{s}^{F'}(t')]\notag \\
  &= \sum_{r,s=1}^{D}\Lambda_{ir}^{-1}\Lambda_{js}^{-1}\int_{\R^d} \nu^F_r(\lambda)e^{i\langle t, \lambda \rangle}\overline{\nu^{F'}_s(\lambda)e^{i\langle t', \lambda \rangle}} \,f\lambda^d(d\lambda)\notag \\
  &=\langle \varphi^F_{t,i},\varphi^{F'}_{t',j}\rangle_{L^2(f\lambda^d)},
\end{align}
for $(F,t),(F',t')\in G(d,d-m)\times \R^d$ and $i,j\in\{1,\ldots, D\}$. By \eqref{eq:isoProcess} we obtain
\begin{align*}
  \langle \varphi^F_{t,i},\varphi^{F'}_{t',j}\rangle_{L^2(f\lambda^d)} =\E[W(\varphi^F_{t,i})W(\varphi^{F'}_{t',j})]
\end{align*}
and therefore the assertion.  Moreover, observe that
\begin{align*}
    \langle \varphi^F_{t,i},\varphi^F_{t,j}\rangle_{L^2(f\lambda^d)} =\E[Y^F_i(t)Y^F_j(t)]=\delta_{ij},
\end{align*}
for $i,j=1,\ldots,D$ and $(F,t)\in G(d,d-m)\times\R^d$. Hence \cite [Theorem 13.25]{book:Kallenberg} implies the second equality in
\begin{align*}
  \prod_{i=1}^D H_{n_i}(Y^\cdot_i(\cdot\cdot))\stackrel{\mathcal{D}}{=}\prod_{i=1}^DH_{n_i}(W(\varphi^\cdot_{\cdot\cdot,i}))=I_q(\varphi_{\cdot\cdot,1}^{\cdot\otimes n_1}\otimes \ldots \otimes \varphi_{\cdot\cdot,D}^{\cdot\otimes n_D}),
\end{align*}
where $q,D\in \N$ and $n\in \N^D$ such that $|n|=q$.
The last equation and Theorem \ref{thm:L^2Approx} with the choice $W=B^d_N$ yield
\begin{align*}
  \frac{\zeta_{m,B^d_N}-\E[\zeta_{m,B^d_N}]}{(N^{d}\kappa_{d})^{1/2}}\stackrel{\mathcal{D}}{=}& \sum_{q=1}^\infty\frac{1}{(N^{d}\kappa_{d})^{1/2}}\sum_{n\in\N^D,|n|=q}\int_{G(d,d-m)} c(n)\int_{B^d_N} I_{q}(\varphi_{t,1}^{F\otimes n_1}\otimes \ldots \otimes \varphi_{t,D}^{F\otimes n_D}) \,dt \,\nu(dF),
\end{align*}
where the right side converges in $L^2(\mathbb{P})$.
We now symmetrise the arguments of the stochastic integral. To this end define for $q,D\in \N$ and $n\in \N^D$ with $|n|=q$ the set
\begin{align*}
  \A_n:=\{k\in\{1,\ldots,D\}^q:\sum_{j=1}^q\ind_{\{i\}}(k_j)=n_i,\forall i=1,\ldots,D\}
\end{align*}
of multiindices, which contain the number $i$ exactly $n_i$ times. Note that for $k\in \A_n$ all permutations of $k$ are also in $\A_n$, and moreover, these sets form a partition of the set $\{1,\ldots,D\}^q$, i.e. $\{1,\ldots,D\}^q=\dot\cup_{n\in\N^D,|n|=q}\A_n$. We further define for $k\in \{1,\ldots,D\}^q$
\begin{align*}
  b(k):= \sum_{n\in\N^D,|n|=q}\ind\{k\in \A_n\}\frac{c(n)}{|\A_n|},
\end{align*}
which is symmetric in the components of $k$.
Since the Wiener-It\^o integrals are invariant with respect to permutations, we obtain for $n\in \N^D$ with $|n|=q$
\begin{align*}
  I_q(\varphi_{t,1}^{F\otimes n_1}\otimes \ldots \otimes \varphi_{t,D}^{F\otimes n_D}) = \frac{1}{|\A_n|}\sum_{k\in \A_n}I_q(\varphi^F_{t,k_1}\otimes \ldots \otimes \varphi^F_{t,k_q})
\end{align*}
and thus
\begin{align*}
  \sum_{n\in\N^D,|n|=q} c(n)I_q(\varphi_{t,1}^{F\otimes n_1}\otimes \ldots \otimes \varphi_{t,D}^{F\otimes n_D})&= \sum_{n\in\N^D,|n|=q}\sum_{k\in\A_n}\frac{c(n)}{|\A_n|}I_q(\varphi_{t,k_1}^F\otimes \ldots \otimes \varphi^F_{t,k_q})\\
  &=\sum_{k\in\{1,\ldots,D\}^q}b(k)I_q(\varphi^F_{t,k_1}\otimes \ldots \otimes \varphi^F_{t,k_q}).
\end{align*}
Hence by Fubini's theorem for Wiener-It\^o integrals, we finally obtain a representation for the standardized $\zeta_{m,B^d_N}$, which is amenable to the theory described in \cite{book:NourdinPeccati}, i.e.
\begin{align*}
  \frac{\zeta_{m,B^d_N}-\E[\zeta_{m,B^d_N}]}{(N^{d}\kappa_{d})^{1/2}}\stackrel{\mathcal{D}}{=}\sum_{q=1}^\infty I_q(g_{N,q}),
\end{align*}
where
\begin{align}\label{def:gNq}
  g_{N,q}:=\frac{1}{(N^{d}\kappa_{d})^{1/2}}\sum_{k\in\{1,\ldots,D\}^q}\int_{G(d,d-m)} b(k)\int_{B^d_N} \varphi^F_{t,k_1}\otimes \ldots \otimes \varphi^F_{t,k_q} \,dt\, \nu(dF)
\end{align}
is symmetric, since the coefficients $b(\cdot)$ are.

\section{Proof of the main theorem}
We now apply Theorem 6.3.1 in \cite{book:NourdinPeccati}, which yields, once we have checked the required conditions, the main theorem of this paper. We repeat it here for completeness and note that in the monograph \cite{book:NourdinPeccati} condition (iv) is stated slightly differently but the proof given there remains the same.

\begin{theorem}[Theorem 6.3.1 in \cite{book:NourdinPeccati}]\label{thm:NourdinPeccati}
Let  $F_N \in L^2(\mathbb{P})$, for $N\in \N$, such that $\mathbb{E}[F_N]=0$. Then there exist functions $g_{N,q}\in \mathfrak{H}^{\odot q}$, for $N,q\in \N$, such that $F_N=\sum_{q\geq 1} I_q(g_{N,q})$.
Suppose that the following conditions
\begin{enumerate}[label={\normalfont(\roman*)}]
  \item For fixed $q\geq 1$ there exists $\sigma^2_q\geq 0$ such that $q!\|g_{N,q}\|^2_{\mathfrak{H}^{\otimes q}} \stackrel{N \rightarrow \infty}{\longrightarrow }\sigma_q^2$,
  \item $\sigma^2:=\sum_{q\geq 1} \sigma_q^2 <\infty$,
  \item For all $q \geq 2$ and $r=1,\ldots,q-1$ we have $\|g_{N,q}\otimes_{r}g_{N,q}\|_{\mathfrak{H}^{\otimes(2q-2r)}}\stackrel{N \rightarrow \infty}{\longrightarrow } 0$,
  \item $\lim_{Q\to \infty} \limsup_{N\to \infty} \sum_{q=Q+1}^\infty q! \|g_{N,q}\|^2_{\mathfrak{H}^{\otimes q}}=0$
\end{enumerate}
are true. Then $F_n \stackrel{\mathcal{D}}{\longrightarrow } \mathcal{N}(0,\sigma^2)$.
\end{theorem}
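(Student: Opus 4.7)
The plan is to follow the strategy of Section~6.3 of \cite{book:NourdinPeccati}: handle each fixed chaos order separately via the fourth moment theorem, upgrade the marginal convergences to joint convergence by Peccati--Tudor, and conclude via a truncation argument in $L^2$.

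For each fixed $q \geq 1$, the Nualart--Peccati fourth moment theorem states that a sequence of $q$-th Wiener chaos elements $I_q(g_{N,q})$ with bounded variance converges in distribution to a centered Gaussian if and only if all contraction norms $\|g_{N,q} \otimes_r g_{N,q}\|_{\mathfrak{H}^{\otimes 2(q-r)}}$, for $r = 1, \ldots, q-1$, vanish as $N \to \infty$. Conditions (i) and (iii) are precisely these hypotheses, so $I_q(g_{N,q}) \stackrel{\mathcal{D}}{\longrightarrow} \mathcal{N}(0, \sigma_q^2)$. The Peccati--Tudor theorem then upgrades marginal to joint convergence with independent Gaussian limits (a consequence of the chaos orthogonality in $L^2(\mathbb{P})$ together with the marginal Gaussianity), giving for every fixed $Q \in \N$
\begin{align*}
F_N^{(Q)} := \sum_{q=1}^Q I_q(g_{N,q}) \stackrel{\mathcal{D}}{\longrightarrow} \mathcal{N}\Bigl(0, \textstyle\sum_{q=1}^Q \sigma_q^2\Bigr), \quad N \to \infty,
\end{align*}
and by (ii) the partial variance converges to $\sigma^2 < \infty$ as $Q \to \infty$.

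The final ingredient is a truncation in $L^2(\mathbb{P})$: by chaos orthogonality,
\begin{align*}
\|F_N - F_N^{(Q)}\|_{L^2(\mathbb{P})}^2 = \sum_{q > Q} q! \|g_{N,q}\|_{\mathfrak{H}^{\otimes q}}^2,
\end{align*}
which by condition (iv) can be made arbitrarily small for sufficiently large $Q$, uniformly in $N$ large. A standard characteristic-function sandwich, using $|\E[e^{itF_N}]-\E[e^{itF_N^{(Q)}}]| \leq |t|\,\|F_N - F_N^{(Q)}\|_{L^2(\mathbb{P})}$ and letting $N \to \infty$ first, then $Q \to \infty$, yields $F_N \stackrel{\mathcal{D}}{\longrightarrow} \mathcal{N}(0, \sigma^2)$.

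The hard part is the fourth moment theorem itself, whose proof (which we do not reproduce) uses the product formula for multiple Wiener--It\^o integrals and, in its Nourdin--Peccati refinement, combines this with Stein's method to produce explicit bounds in total variation distance. Conditional on this deep input, the remaining steps are routine: Peccati--Tudor follows from polarization of the fourth moment criterion together with the product formula, and the truncation step is a standard $L^2$-approximation argument.
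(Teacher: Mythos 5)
Your outline is correct and is essentially the proof given in the cited source: the paper itself does not prove Theorem \ref{thm:NourdinPeccati} but imports it from \cite{book:NourdinPeccati}, whose argument is exactly the one you describe (fourth moment theorem for each fixed chaos, Peccati--Tudor for joint convergence of the truncated sums, and the $L^2$/characteristic-function sandwich using condition (iv)). No discrepancy to report.
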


Before we verify the conditions, we need to prove the following auxiliary lemma, which will be needed for condition (ii) and (iv).
\begin{lemma}\label{lem:auxBounds}
  There exists $c>0$ depending on the covariance of $X$, $d$ and $m$ such that
  \begin{enumerate}[label={\normalfont(\roman*)}]
    \item \label{lem:ub} $\sum_{n\in\N^D,|n|=q}c(n)^2 n! \leq c q^D$ for $q\geq 1$.
    \item \label{lem:uniBound} $\sup\limits_{W\subset [0,1)^d \mathrm{\, convex}} \E\left[ (\sum_{q=1}^\infty \sum_{n\in \N^D, |n|=q}c(n)\int_{G(d,d-m)}\int_{W}\widetilde H_n(Y^F(t))dt \nu(dF))^2\right] \leq c.$
  \end{enumerate}
\end{lemma}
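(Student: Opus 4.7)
I would handle (i) and (ii) separately, with (i) being the real substance. For (i), the key observation is that the coefficient $c(n)$ has a product structure. Writing $a=(n_1,\ldots,n_{d-m})$ and $b=(n_{d-m+1},\ldots,n_D)$, definition \eqref{def:coefc} factorises as $c(n)=c_1(a)c_2(b)$ with
\begin{align*}
    c_1(a)=(2\pi)^{-(d-m)/2}\prod_{i=1}^{d-m}\frac{H_{n_i}(0)}{n_i!},\qquad c_2(b)=\frac{1}{b!}\int_{\R^{D-(d-m)}}h(y)\widetilde H_b(y)\phi_{D-(d-m)}(y)\,dy,
\end{align*}
where
\begin{align*}
    h(y):=(-1)^{d-m}\det\big((\Lambda_2y)_{1,\ldots,(d-m)(d-m+1)/2}\big)\ind\{(\Lambda_2y)_{(d-m)(d-m+1)/2+1}\geq u\}.
\end{align*}
Since $h$ is a polynomial of degree $d-m$ times a bounded indicator, it lies in $L^2(\phi_{D-(d-m)}\lambda^{D-(d-m)})$, and Parseval's identity yields $\sum_{|b|=q_2}c_2(b)^2 b!\leq\|h\|_{L^2}^2$ uniformly in $q_2$. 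For the $c_1$-part I would invoke the identity $\sum_{k\ge0}H_k(0)^2z^k/k!=(1-z^2)^{-1/2}$ (obtainable either from $H_{2k}(0)=(-1)^k(2k)!/(2^kk!)$ or from Mehler's formula at $x=y=0$), so that
\begin{align*}
    \sum_{|a|=q_1}\prod_{i=1}^{d-m}\frac{H_{n_i}(0)^2}{n_i!}=[z^{q_1}](1-z^2)^{-(d-m)/2},
\end{align*}
which is $O(q_1^{(d-m)/2-1})$ by the Gamma-asymptotics of generalised binomial coefficients. Splitting
\begin{align*}
    \sum_{|n|=q}c(n)^2 n!=\sum_{q_1+q_2=q}\bigg(\sum_{|a|=q_1}c_1(a)^2 a!\bigg)\bigg(\sum_{|b|=q_2}c_2(b)^2 b!\bigg)
\end{align*}
and inserting both bounds then yields $\sum_{|n|=q}c(n)^2 n!\le Cq^{(d-m)/2}\le Cq^D$, proving (i).

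For (ii) I would appeal to Theorem \ref{thm:L^2Approx}. The $q=0$ term of its expansion is the deterministic constant $c(0)\nu(G(d,d-m))\mathcal{H}^d(W)$, and since $\widetilde H_n(Y^F(t))$ has mean zero for every $|n|\ge1$, that constant must equal $\E[\zeta_{m,W}]$. Hence the series in (ii), being the tail $q\ge1$, equals $\zeta_{m,W}-\E[\zeta_{m,W}]$ in $L^2(\mathbb{P})$, and it suffices to bound $\E[\zeta_{m,W}^2]$ uniformly over convex $W\subset[0,1)^d$. From definition \eqref{def:zeta} we have $|\xi_W(F)|\le\#\{t\in W\cap F\colon\nabla(X|_F)(t)=0\}$, and the Cauchy-Schwarz inequality relative to the finite measure $\ind\{F\cap[0,1]^d\neq\emptyset\}\mu(dF)$ gives
\begin{align*}
    \E[\zeta_{m,W}^2]\le\mu(\{F\colon F\cap[0,1]^d\neq\emptyset\})\int_{A(d,d-m)}\E\big[\#\{t\in W\cap F\colon\nabla(X|_F)(t)=0\}^2\big]\mu(dF).
\end{align*}
The prefactor depends only on $d$ and $m$; monotonicity of the counting variable in $W$ together with Lemma \ref{lem:hilfL^2}(i) applied on the fixed compact convex window $[0,1]^d$ bounds the integrand uniformly, completing (ii).

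I expect the main technical difficulty to be (i): the formal $\varepsilon\to0$ limit of $G_\varepsilon$ carries a Dirac factor $\delta_0$ in the gradient coordinates, so the limit fails to lie in $L^2(\phi_D\lambda^D)$ and Parseval cannot be applied to the full $D$-variable function. The polynomial $q^{(d-m)/2}$ growth of the squared chaos norms reflects exactly this failure, and the generating-function identity $(1-z^2)^{-1/2}$ is the quantitative tool that makes the bound explicit.
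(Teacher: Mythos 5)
Your proposal is correct in both parts, but part (i) follows a genuinely different quantitative route. The paper bounds each summand individually: it invokes Imkeller's estimate $\prod_i |H_{n_i}(0)|/\sqrt{n_i!}\le c$ for the Hermite-at-zero block and a Cauchy--Schwarz bound $Z(n)^2\le c\prod_{i>d-m}n_i!$ for the integral block, so that $c(n)^2n!\le c$ termwise, and then simply multiplies by the number of multi-indices with $|n|=q$, which it crudely bounds by $(q+1)^D$. You instead sum: Parseval applied to $h\in L^2(\phi_{D-(d-m)}\lambda^{D-(d-m)})$ controls the whole block $\sum_{|b|=q_2}c_2(b)^2b!$ by $\|h\|^2$ at once (the paper's Cauchy--Schwarz step is exactly the termwise shadow of this), and the generating function $\sum_k H_k(0)^2z^k/k!=(1-z^2)^{-1/2}$ gives the exact order $q_1^{(d-m)/2-1}$ of the first block. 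This yields the sharper bound $cq^{(d-m)/2}$ and identifies the true polynomial growth of the chaos variances, which would matter for quantitative rates, though for the purposes of conditions (ii) and (iv) of Theorem \ref{thm:NourdinPeccati} the paper's cruder $cq^D$ is equally sufficient (both are beaten by the geometric factor $(c\rho)^q$). For part (ii) your argument coincides with the paper's up to the last step: both identify the tail $q\ge 1$ with $\zeta_{m,W}-\E[\zeta_{m,W}]$ via Theorem \ref{thm:L^2Approx}, dominate $|\xi_W(F)|$ by the zero-counting variable, and apply Cauchy--Schwarz in $\mu$; the paper then re-runs the Rice-formula estimates \eqref{eq:rice}, \eqref{eq:ricefac} together with Lemmata \ref{lem:boundDensity} and \ref{lem:boundExp} at $N=d^{1/2}$ to make the uniformity over $W\subset[0,1)^d$ explicit, whereas you obtain that uniformity more economically from the monotonicity of the counting variable in the window and a single application of Lemma \ref{lem:hilfL^2}(i) on the fixed window $[0,1]^d$, whose constant depends only on that window. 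Both routes are valid.
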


\begin{proof}
  In the following the constant $c>0$ may be changing from line to line.
  Recall \eqref{def:coefc}
  \begin{align*}
    &c(n)=(2\pi)^{-(d-m)/2}\prod_{i=1}^{d-m}\frac{H_{n_i}(0)}{n_i!}\frac{(-1)^{d-m}}{\prod_{i=d-m+1}^Dn_i!} \\
    &\quad\times\underbrace{\int_{\R^{D-d+m}}\det\big((\Lambda_2y)_{1,\ldots,\frac{(d-m)(d-m+1)}{2}} \big)\ind\{(\Lambda_2y)_{D-d+m}\geq u\}\prod_{i=d-m+1}^D H_{n_i}(y) \phi_{D-d+m}(y)\,dy.}_{:=Z(n)}
  \end{align*}
  Proposition 3 in \cite{paper:Imkeller} yields $\prod_{i=1}^{d-m}\frac{|H_{n_i}(0)|}{\sqrt{n_i!}}\leq c$, for a constant $c>0$, and thus
  \begin{align*}
    \left((2\pi)^{-(d-m)/2}\prod_{i=1}^{d-m}\frac{H_{n_i}(0)}{n_i!}\right)^2 \leq \frac{c}{ \prod_{i=1}^{d-m}n_i!}.
  \end{align*}
  By H\"older's inequality, we obtain
  \begin{align*}
    Z(n)^2&\leq \int_{\R^{D-d+m}}\det\big( (\Lambda_2y)_{1,\ldots,\frac{(d-m)(d-m+1)}{2}} \big)^2\ind\{(\Lambda_2y)_{D-d+m}\geq u\}\phi_{D-d+m}(y)\, dy\\
    &\quad\times\int_{\R^{D-d+m}}\left(\prod_{i=d-m+1}^D H_{n_i}(y)\right)^2 \phi_{D-d+m}(y)\,dy\\
    &=c\prod_{i=d-m+1}^D n_i!.
  \end{align*}
  The last two inequalities yield for $q\geq 1$
  \begin{equation*}
    \sum_{n\in \N^D,|n|=q}c(n)^2n! \leq c \sum_{n\in \N^D,|n|=q}1 \leq c \sum_{0\leq n_1,\ldots,n_D\leq q}1 \leq c(q+1)^D\leq cq^D,
  \end{equation*}
  which shows the first assertion. We now proof \ref{lem:uniBound}. By Theorem \ref{thm:L^2Approx}
  \begin{align*}
    \sum_{q=0}^\infty& \sum_{n\in \N^D, |n|=q}c(n)\int_{G(d,d-m)}\int_{W}\widetilde H_n(Y^F(t))\,dt\, \nu(dF) \\
    &=\int_{A(d,d-m)} \#\{t\in W\cap F \mid X(t)\geq u, \nabla (X|_F)(t)=0,\iota_{-X,W\cap F}(t )\text{ even}\}\\
    &\hspace{1.5cm} -\#\{t\in W\cap F \mid X(t)\geq u, \nabla (X|_F)(t)=0,\iota_{-X,W\cap F}(t )\text{ odd}\}\, \mu(dF),
  \end{align*}
  whose second moment is a upper bound for the expectation in \ref{lem:uniBound}. The latter can be bounded by
  \begin{align*}
    \int_{A(d,d-m)} \#\{t\in W\cap F \mid  \nabla (X|_F)(t)=0\}\, \mu(dF).
  \end{align*}
  By Jensen's inequality and Fubini's theorem
  \begin{align*}
    &\E\left[ (\int_{A(d,d-m)} \#\{t\in W\cap F \mid  \nabla (X|_F)(t)=0\}\, \mu(dF))^2 \right]\\
    \quad&\leq \mu(\{F:F\cap W\neq \emptyset\}) \int_{A(d,d-m)}\E\left[\#\{t\in W\cap F \mid  \nabla (X|_F)(t)=0\}^2 \right]\,\mu(dF).
  \end{align*}
  Now, by \eqref{eq:rice} and \eqref{eq:ricefac} we bound the integrand by
  \begin{align*}
    c\mathcal{H}^{d-m}(W^F_{v(F)}) + &\int_{W^F_{v(F)}-W^F_{v(F)}} \E\left[ |\det D^2_{v(F)}(X)(\rho_{v(F)}^F(t))\det D^2_{v(F)}(X)(\rho_{v(F)}^F(0))| \mid \mathcal{E}(F,t,0) \right]\\
    &\times p_{\nabla_{v(F)} (X)(\rho_{v(F)}^F(t))\nabla_{v(F)} (X)(\rho_{v(F)}^F(0))}(0,0)\mathcal{H}^{d-m}(W^F_{v(F)}\cap (W^F_{v(F)}-t)) \,dt,
  \end{align*}
  where $c\geq0$ is a constant depending on $X$, $d$ and $m$. Taking $N:=d^{1/2}$ in Lemmata \ref{lem:boundDensity} and  \ref{lem:boundExp} we obtain for the second summand the upper bound
  \begin{align*}
    c\int_{W^F_{v(F)}-W^F_{v(F)}} \|t\|^{-(d-m)+2}\mathcal{H}^{d-m}(W^F_{v(F)}\cap (W^F_{v(F)}-t))\,dt.
  \end{align*}
  Since $W^F_{v(F)}\subset  B^{d-m}_{d^{1/2}}$ and $W^F_{v(F)}-W^F_{v(F)}\subset B^{d-m}_{d^{1/2}}$, we conclude
  \begin{align*}
    \E\left[\#\{t\in W\cap F \mid \nabla (X|_F)(t)=0\}^2 \right] \leq c\mathcal{H}^{d-m}(B^{d-m}_{d^{1/2}})\left(1+\int_{B^{d-m}_{d^{1/2}}}\|t\|^{-(d-m)+2}\,dt\right).
  \end{align*}
  Hence
  \begin{align*}
    \E&\left[ (\int_{A(d,d-m)} \#\{t\in W\cap F \mid \nabla (X|_F)(t)=0\}\, \mu(dF))^2 \right]\\
    &\quad\leq c\mu(\{F:F\cap B^d_{d^{1/2}}\neq \emptyset\})^2\mathcal{H}^{d-m}(B^{d-m}_{d^{1/2}}) \left(1+\int_{B^{d-m}_{d^{1/2}}}\|t\|^{-(d-m)+2}\,dt\right).\qedhere
  \end{align*}
\end{proof}
In the following we verify the conditions of Theorem \ref{thm:NourdinPeccati}.

{\textbf{Condition (i):}} We calculate the norm  of $g_{N,q}$, cf. \eqref{def:gNq}, by an application of Fubini's theorem and obtain
\begin{align*}
  &q!\| g_{N,q}\|^{2}_{\mathfrak{H}^{\otimes q}}\\
  &= \int_{\R^{dq}}\frac{q!}{N^d\kappa_d}\sum_{k,l\in\{1,\ldots, D\}^q}b(k)b(l)\int_{G(d,d-m)}\int_{G(d,d-m)}\int_{B^d_N}\int_{B^d_N}\varphi^{F}_{t,k_1}\otimes \ldots \otimes \varphi^{F}_{t,k_q}(\lambda_1,\dots,\lambda_q)\\
  &\quad\times\overline{\varphi^{F'}_{t',l_1}}\otimes \ldots \otimes \overline{\varphi^{F'}_{t',l_q}}(\lambda_1,\ldots,\lambda_q) \,dt\,dt'\,\nu(dF)\,\nu(dF')\,(f\lambda^d)^q(d(\lambda_1,\ldots,\lambda_q))\\
  &=\frac{q!}{N^d\kappa_d}\sum_{k,l\in\{1,\ldots, D\}^q}b(k)b(l)\\
  &\quad\times \int_{G(d,d-m)}\int_{G(d,d-m)}\int_{B^d_N}\int_{B^d_N}\prod_{i=1}^q \int_{\R^d}\varphi^{F}_{t,k_i}(\lambda)\overline{\varphi^{F'}_{t',l_i}}(\lambda)\,f \lambda^d(d\lambda) \,dt\, dt'\, \nu(dF)\,\nu(dF').
\end{align*}
Recalling $\int_{\R^d}\varphi^{F}_{t,k_i}(\lambda)\overline{\varphi^{F'}_{t',l_i}}(\lambda)\,f\lambda^d(d\lambda) =\E\left[Y^F_{k_i}(t)Y^{F'}_{l_i}(t')\right]$ in \eqref{eq:covY},
the above equals
\begin{align*}
\frac{q!}{N^d\kappa_d}&\sum_{k,l\in\{1,\ldots, D\}^q}b(k)b(l)\int_{G(d,d-m)}\int_{G(d,d-m)}\int_{B^d_N}\int_{B^d_N}\prod_{i=1}^q \E\left[Y^F_{k_i}(t)Y^{F'}_{l_i}(t')\right] \,dt \,dt' \,\nu(dF)\,\nu(dF').
\end{align*}
By stationarity in the first and Fubini's theorem in the second line
\begin{align*}
  &\frac{1}{N^d\kappa_d}\int_{G(d,d-m)}\int_{G(d,d-m)}\int_{B^d_N}\int_{B^d_N} \prod_{i=1}^q\E\left[Y^F_{k_i}(t)Y^{F'}_{l_i}(t')\right] \,dt\, dt'\,\nu(dF)\,\nu(dF')\\
  &=\frac{1}{N^d\kappa_d}\int_{G(d,d-m)}\int_{G(d,d-m)}\int_{B^d_N}\int_{B^d_N-t'} \prod_{i=1}^q\E\left[Y^F_{k_i}(t+t')Y^{F'}_{l_i}(t')\right] \,dt\, dt'\,\nu(dF)\,\nu(dF')\\
  &=\int_{B^d_{2N}}\int_{G(d,d-m)}\int_{G(d,d-m)} \prod_{i=1}^q\E\left[Y^F_{k_i}(t)Y^{F'}_{l_i}(0)\right]\frac{\mathcal{H}^d((B^d_N-t)\cap B^d_N)}{N^d\kappa_d}\,\nu(dF)\,\nu(dF')\,dt.
\end{align*}
By the definition of $Y$, cf. \eqref{def:Y}, we have the following equality for the covariance matrix of $Y$ $\left(\E\left[Y_i^F(t)Y_j^{F'}(0)\right]\right)_{1\leq i,j\leq D}=\Lambda^{-1}\left(\E\left[\mathcal{X}^F_i(t)\mathcal{X}^{F'}_j(0)\right]\right)_{1\leq i,j\leq D}\Lambda^{-\top}$ and therefore by assumption \ref{as:covariance} there exists a constant $c\geq 0 $ such that
\begin{align*}
  \sup_{1\leq i,j\leq D}\left|\E\left[Y_i^F(t)Y_j^{F'}(0)\right]\right| \leq c\widetilde\psi(t),
\end{align*}
which is an integrable upper bound. By the dominated convergence theorem
\begin{align}\label{eq:asymptoticVariance}
  q!&\| g_{N,q}\|^{2}_{\mathfrak{H}^{\otimes q}}\notag\\
  &=\frac{q!}{N^d\kappa_d}\sum_{k,l\in\{1,\ldots, D\}^q}b(k)b(l)\int_{B^d_{2N}}\int_{G(d,d-m)}\int_{G(d,d-m)}\mathcal{H}^d((B^d_N-t)\cap B^d_N)\notag\\
  &\hspace{2cm}\times\prod_{i=1}^q \E\left[Y^F_{k_i}(t)Y^{F'}_{l_i}(0)\right] \,\nu(dF)\,\nu(dF')\,dt
  \notag\\&  \stackrel{N\to \infty}{\longrightarrow} q!\sum_{k,l\in\{1,\ldots, D\}^q}b(k)b(l)\int_{\R^d}\int_{G(d,d-m)}\int_{G(d,d-m)}\prod_{i=1}^q \E\left[Y^F_{k_i}(t)Y^{F'}_{l_i}(0)\right]\,\nu(dF)\,\nu(dF')\,dt,
\end{align}
where we define the limit as $\sigma^2_{m,q}$. Note that we implicitly used $\mathcal{H}^d((B^d_N-t)\cap B^d_N)/\mathcal{H}^d(B^d_N)\to 1$ for $N\to \infty$ and $t\in \R^d$. To see this consider the discussion following equation (3.22) in \cite{paper:HugLastSchulte}.

\textbf{Condition (ii):} We observe that
\begin{align*}
  \sum_{q=1}^\infty &\lim_{N\to \infty} q!\|g_{N,q}\|^2= \sum_{q=1}^\infty\lim_{N\to \infty} \E\left[ I_q(g_{N,q})^2\right]
\end{align*}
by the It\^o isometry. Reversing  some of the earlier manipulations the latter equals
\begin{align*}
  \sum_{q=1}^\infty\lim_{N\to \infty}\frac{1}{\mathcal{H}^d(B^d_N)}\E\left[\left (\sum_{n\in\N^D,|n|=q}\int_{G(d,d-m)}c(n)\int_{B^d_N}\widetilde H_n(Y^F(t))\,dt\, \nu(dF)\right)^2\right].
\end{align*}
Fatou's Lemma and orthogonality yield the upper bound
\begin{align*}
  \liminf_{N\to \infty}\frac{1}{\mathcal{H}^d(B^d_N)}\E\left[\left (\sum_{q=1}^\infty\sum_{n\in\N^D,|n|=q}\int_{G(d,d-m)}c(n)\int_{B^d_N}\widetilde H_n(Y^F(t))\,dt\, \nu(dF)\right)^2\right].
\end{align*}
Partitioning the space $\R^d$ into translates of the unit cube $[0,1)^d$ the latter without the limit inferior equals
\begin{align*}
\frac{1}{\mathcal{H}^d(B^d_N)}\sum_{z_1,z_2\in \Z^d\cap B^d_{N+d}} &\E\left[ \sum_{q=1}^\infty\sum_{|n|=q}\int_{G(d,d-m)}c(n)\int_{[0,1)^d+z_1}\ind\{t\in B^d_N\} \widetilde H_n( Y^F(t))\,dt\,\nu(dF)\right.\\
&\left.\times \sum_{q=1}^\infty\sum_{|n|=q}\int_{G(d,d-m)}c(n)\int_{[0,1)^d+z_2}\ind\{t\in B^d_N\} \widetilde H_n( Y^F(t))\,dt\,\nu(dF)\right].
\end{align*}
We define $\tau^{F,F'}(t):=\max\{\max_{i}\sum_{k=1}^D|\E\left[ Y^F_i(0)Y^{F'}_k(t)\right]|, \max_{k} \sum_{i=1}^D |\E\left[ Y^F_i(0)Y^{F'}_k(t)\right]| \}$ for $t\in \R^d$, $F,F'\in G(d,d-m)$, and note that due to \ref{as:covariance} there is a constant $c>0$ so that $\tau^{F,F'}(t)\leq c\psi(t)$. Moreover \ref{as:covariance} implies that for $\rho\in (0,1)$ and $\rho<1/c$ there is a constant $s>0$ such that
\begin{align*}
  \psi(t)\leq \rho, \text{ for } |t|\geq s.
\end{align*}
Using $s$, we split the above summation into one over $I_1^N:=\{(z_1,z_2)\in (\Z^d\cap B^d_{N+d})^2\mid \|z_1-z_2\|_\infty \geq s+1 \}$ and $I_2^N:=\{(z_1,z_2)\in (\Z^d\cap B^d_{N+d})^2\mid \|z_1-z_2\|_\infty \leq s \}$. By Fubini's theorem, orthogonality and stationarity the first sum equals
\begin{align}\label{eq:sum1Arc}
  &\sum_{(z_1,z_2)\in I_1^N} \sum_{q=1}^\infty\int_{G(d,d-m)}\int_{G(d,d-m)} \int_{(-1,1)^d}\E\left[ \sum_{|n|=q}c(n) \widetilde H_n(Y^{F}(t+z_1))\sum_{|n|=q}c(n)\widetilde H_n(Y^{F'}(z_2))\right]\notag\\
  &\quad\quad\times \mathcal{H}^d([0,1)^d\cap[0,1)^d-t\cap B^d_N-z_2\cap B^d_N-t-z_1)\,dt\,\nu(dF)\,\nu(dF')\mathcal{H}^d(B^d_N)^{-1}.
\end{align}
Now, we use Lemma 1 in \cite{paper:Arcones}, which reads
\begin{lemma}[Arcones 1994]\label{lem:Arcones}
  Let $V,W$ be two centered $d$-dimensional Gaussian random vectors  such that $\E[V_iV_j]=\E[W_iW_j]=\delta_{ij}$ and let $h\colon \R^d\to \R$ have Hermite rank $r\in \N$ (i.e. $r=\inf\{k\in \N : \exists l_j \text{ such that }\sum_{j=1}^dl_j=k\text{ and }\E[(h(N)-\E[h(N)])\widetilde H_l(N)]\neq 0\}$ where $N\sim\mathcal{N}_d(0,I)$). Define $\tau:=\max\{\max_{1\leq j \leq d}\sum_{k=1}^d |\E[V_jW_k]|,\max_{1\leq k \leq d}\sum_{j=1}^d |\E[V_jW_k]|\}$, which is assumed to be less than 1. Then we have
  \begin{align*}
    |\E[(h(V)-\E[h(V)])(h(W)-\E[h(W)])]|\leq \tau ^r \E[h(V)^2].
  \end{align*}
\end{lemma}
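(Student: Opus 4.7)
The plan is to reduce Arcones' inequality to a chaos-by-chaos estimate via the generalised Mehler formula of Lemma~\ref{lem:generalisedMehlersFormula}. Expand $h$ in the orthonormal multivariate Hermite basis, $h = \sum_{n \in \N^d} (a_n / \sqrt{n!}) \widetilde H_n$ in $L^2(\R^d, \phi_d \lambda^d)$. The Hermite rank hypothesis forces $a_n = 0$ for $1 \leq |n| < r$, and Parseval yields $\E[h(V)^2] = \E[h(W)^2] = \sum_n a_n^2$ since both vectors are standard normal. By the vanishing clause of Lemma~\ref{lem:generalisedMehlersFormula}, the centred covariance decomposes orthogonally as $\sum_{q \geq r} E_q$ with
\begin{align*}
  E_q := \sum_{|n| = |n'| = q} \frac{a_n a_{n'}}{\sqrt{n! n'!}} \, \E\bigl[\widetilde H_n(V) \widetilde H_{n'}(W)\bigr].
\end{align*}

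Substituting the explicit Mehler formula, $E_q$ becomes a sum over non-negative integer matrices $\mathbf{d} \in \mathcal{M}(n, n')$ with row sums $n'_i$ and column sums $n_j$, weighted by $\prod_{i,j} \E[V_i W_j]^{d_{ij}} / d_{ij}!$. The key intermediate claim is the single-chaos estimate $|E_q| \leq \tau^q \sum_{|n| = q} a_n^2$. Writing $\sigma_{ij} := |\E[V_i W_j]|$ and applying the triangle inequality, then $|a_n a_{n'}| \leq \tfrac{1}{2}(a_n^2 + a_{n'}^2)$ together with the symmetry $\mathbf{d} \leftrightarrow \mathbf{d}^\top$, the task reduces to the one-sided bound
\begin{align*}
  S(n) := \sum_{|n'| = q} \sqrt{n! n'!} \sum_{\mathbf{d} \in \mathcal{M}(n, n')} \prod_{i,j} \frac{\sigma_{ij}^{d_{ij}}}{d_{ij}!} \leq \tau^q \qquad \text{for every fixed } n \text{ with } |n| = q.
\end{align*}

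The main obstacle is this combinatorial estimate, which is the heart of Arcones' lemma. Since fixing $\mathbf{d}$ with column sums $n_j$ automatically determines $n'_i = \sum_j d_{ij}$, the double sum in $S(n)$ collapses to a single sum over such $\mathbf{d}$. I would expand $\sqrt{n!} = \prod_j \sqrt{n_j!}$ and rewrite $n_j! / \prod_i d_{ij}!$ as column-multinomials; a Cauchy--Schwarz step then trades the factor $\sqrt{\prod_i (\sum_j d_{ij})!}$ against the row-multinomials. The resulting expression factorises over the rows (or, symmetrically, the columns) and is bounded by a product of quantities of the form $\sum_j \sigma_{ij} \leq \tau$ (respectively $\sum_i \sigma_{ij} \leq \tau$). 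Since the total weight $\sum_{i,j} d_{ij}$ equals $q$, exactly $q$ such factors appear, giving the bound $\tau^q$.

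Summing the chaos-wise estimate over $q \geq r$ and invoking $\tau < 1$ together with Parseval,
\begin{align*}
  \bigl|\E\bigl[(h(V) - \E[h(V)])(h(W) - \E[h(W)])\bigr]\bigr| \leq \sum_{q \geq r} \tau^q \sum_{|n| = q} a_n^2 \leq \tau^r \sum_n a_n^2 = \tau^r \E[h(V)^2],
\end{align*}
which is the claimed inequality.
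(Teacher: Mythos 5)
The paper does not prove this lemma at all: it is quoted verbatim as Lemma~1 of Arcones (1994), so there is no internal proof to compare against. Your overall architecture --- Hermite expansion of $h$, the vanishing of $a_n$ for $1\leq |n|<r$, the chaos-by-chaos decomposition via the Mehler formula of Lemma~\ref{lem:generalisedMehlersFormula}, and the final summation $\sum_{q\geq r}\tau^q(\cdot)\leq \tau^r\E[h(V)^2]$ --- is sound, and the intermediate claim $|E_q|\leq \tau^q\sum_{|n|=q}a_n^2$ is in fact true. The problem is the route you take to it. After applying the triangle inequality termwise and $|a_na_{n'}|\leq\tfrac12(a_n^2+a_{n'}^2)$, you have decoupled $n$ from $n'$ and must now bound $S(n)$, a sum over \emph{all} $n'$ with $|n'|=q$ regardless of whether $a_{n'}$ vanishes. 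That combinatorial bound $S(n)\leq\tau^q$ is false. Take $d=2$ (in the lemma's notation), $\sigma_{ij}=|\E[V_iW_j]|=c$ for all $i,j$ with $0<c<1/2$ (realizable, e.g.\ $\E[V_iW_j]=c$ gives a positive semidefinite joint covariance), and $n=(1,1)$, $q=2$. The four admissible matrices $\mathbf{d}$ contribute $\sqrt{2}c^2+c^2+c^2+\sqrt{2}c^2=(2+2\sqrt{2})c^2$, whereas $\tau=2c$ and $\tau^2=4c^2<(2+2\sqrt2)c^2$. The offending terms are exactly those with $n'\neq n$, where the factor $\sqrt{n!\,n'!}$ exceeds what the multinomial denominators can absorb; no Cauchy--Schwarz rearrangement of the factorials can repair an inequality that is simply not true. (This is, incidentally, a known delicacy: the analogous step in Arcones' original argument is the one that later authors, e.g.\ Soulier, had to redo.)

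The repair is to \emph{not} take absolute values inside the Mehler sum and not decouple $a_n$ from $a_{n'}$. Writing $\Sigma_{ij}:=\E[V_iW_j]$, one has $\E[h_q(V)h_q(W)]=\E\left[h_q(V)\,\E[h_q(W)\mid V]\right]$, and the conditional-expectation operator $g\mapsto\E[g(W)\mid V=\cdot\,]$ is the second quantization of $\Sigma^{\top}$: it preserves the $q$-th chaos and acts there with operator norm at most $\|\Sigma\|_{\mathrm{op}}^{q}$. Schur's test gives $\|\Sigma\|_{\mathrm{op}}\leq\big(\max_i\sum_j|\Sigma_{ij}|\big)^{1/2}\big(\max_j\sum_i|\Sigma_{ij}|\big)^{1/2}\leq\tau$, whence $|E_q|\leq\tau^q\sum_{|n|=q}a_n^2$ by Cauchy--Schwarz in $L^2(\phi_d\lambda^d)$; your concluding summation over $q\geq r$ then goes through unchanged. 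As written, however, your proof rests on a false inequality and does not establish the lemma.
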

To apply this Lemma, we choose $V=Y^{F'}(z_2)$, $W=Y^F(t+z_1)$ and $h_q\colon \R^D\to \R$ given by $h_q(x):= \sum_{n\in \N^D, |n|=q} c(n) \widetilde H_n(x)$.
Then we have $r=q$ and $\tau^{F,F'} (t+z_1-z_2) \leq c \psi(t+z_1-z_2)<1$ for $t\in (-1,1)^d$ and $z_1,z_2\in I_1^N$. Moreover we have
\begin{align*}
  \E\left[ h_q(Y^{F'}(z_2))^2\right]&=\sum_{n,n'\in\N^D,|n|=|n'|=q} c(n')c(n) \E[\widetilde H_n(Y^{F'}(0))\widetilde H_{n'}(Y^{F'}(0))]\\
  &=\sum_{n,n'\in\N^D,|n|=|n'|=q} c(n')c(n) \prod_{i=1}^D\E[H_{n_i}(Y^{F'}_i(0))H_{n'_i}(Y^{F'}_i(0))]
  \\&=\sum_{n\in\N^D,|n|=q} c(n)^2n!
\end{align*}
and for $q\geq 1$ we obtain $\E\left[ h_q(Y^{F'}(z_2))\right]=\E\left[ h_q(Y^{F}(t+z_1))\right]=0.$
Thus we bound \eqref{eq:sum1Arc} by
\begin{align*}
  K\mathcal{H}^d(B^d_N)^{-1}\sum_{(z_1,z_2)\in I_1^N}\sum_{q=1}^\infty\int_{(-1,1)^d}c^q\psi(t+z_1-z_2)^q\,dt \sum_{n\in\N^D,|n|=q} c(n)^2n!,
\end{align*}
where $K\geq 0$ depends on $d$ and $m$. Lemma \ref{lem:auxBounds}\ref{lem:ub} and $\psi(t+z_1-z_2)^q\leq \rho^{q-1}\psi(t+z_1-z_2)$ yield
\begin{align*}
  K/\rho\mathcal{H}^d(B^d_N)^{-1} \sum_{(z_1,z_2)\in I_1^N} \int_{(-1,1)^d}\psi(t+z_1-z_2)\,dt \sum_{q=1}^\infty q^D (c\rho)^q,
\end{align*}
as an upper bound, where $K\geq 0$ depends on the covariance of $X$, $d$ and $m$. By the estimate $\sum_{(z_1,z_2)\in I_1^N} \int_{(-1,1)^d}\psi(t+z_1-z_2)\,dt\leq 2^d\#\{\Z^d\cap B^d_{N+d}\}\int_{\R^d}\psi(t)\,dt$ we obtain the upper bound
\begin{align*}
  \frac{2^dK}{\rho} \frac{\#\{\Z^d\cap B^d_{N+d}\}}{\mathcal{H}^d(B^d_N)}\int_{\R^d}\psi(t)\,dt \sum_{q=1}^\infty q^D(c\rho)^q.
\end{align*}
The latter is finite since $\liminf_{N\to \infty}\frac{\#\{\Z^d\cap B^d_{N+d}\}}{\mathcal{H}^d(B^d_N)}=1$ and the series converges.

We now analyse the sum over $I_2^N$ and start by using the inequality $ab\leq a^2+b^2$, $a,b\in \R$, to obtain the upper bound
\begin{align*}
  \frac{2(2s+1)^d}{\mathcal{H}^d(B^d_N)}\sum_{z\in \Z^d\cap B^d_{N+d}}\hspace{-3pt}\E\left[ \left(\sum_{q=1}^\infty \sum_{|n|=q}\int_{G(d,d-m)}c(n)\int_{[0,1)^d+z}\ind\{t\in B^d_N\}\widetilde H_n(Y^F(t))\,dt \,\nu(dF)\right)^2\right].
\end{align*}
By stationarity the latter can be bounded by
\begin{align*}
  2(2s+1)^d\frac{\#\{\Z^d\cap B^d_{N+d}\}}{\mathcal{H}^d(B^d_N)}\sup_{\substack{W\subset [0,1)^d\\ \mathrm{\, convex}}}\E\left[ \left(\sum_{q=1}^\infty \sum_{|n|=q}\int_{G(d,d-m)}c(n)\int_{W}\widetilde H_n(Y^F(t))\,dt \,\nu(dF)\right)^2\right],
\end{align*}
whose limit inferior is finite, since the supremum is finite by Lemma \ref{lem:auxBounds}\ref{lem:uniBound}.

\textbf{Condition (iii):}
The $r$-th contraction of $g_{N,q}$, cf. \eqref{def:gNq}, with itself is given by
\begin{align*}
  &g_{N,q}\otimes_r g_{N,q}(a_1,\ldots,a_{2q-2r})\\
  &=\int_{\R^{dr}}\frac{1}{N^d\kappa_d}\sum_{k\in\{1,\ldots,D\}^q}b(k)\int_{G(d,d-m)}\int_{B^d_N} \varphi^F_{t,k_1}(\lambda_1)\ldots\varphi^F_{t,k_r}(\lambda_r)
  \\&\quad\times \varphi^F_{t,k_{r+1}}(a_1)\ldots \varphi^F_{t,k_q}(a_{q-r})\,dt \,\nu(dF) \sum_{l\in \{1,\ldots,D\}^q}b(l)\int_{G(d,d-m)}\int_{B^d_N} \overline{\varphi^{F'}_{t',l_1}}(\lambda_1)\ldots\overline{\varphi^{F'}_{t',l_r}}(\lambda_r)
  \\&\quad\times\varphi^{F'}_{t',l_{r+1}}(a_{q-r+1})\ldots \varphi^{F'}_{t',l_q}(a_{2q-2r})\,dt' \,\nu(dF')\,(f\lambda^d)^r(d(\lambda_1,\ldots,\lambda_r)),
\end{align*}
for $a_1,\ldots, a_{2q-2r}\in \R^d$. By Fubini's theorem and \eqref{eq:covY} the above equals
\begin{align*}
  \frac{1}{N^d\kappa_d}&\sum_{k,l\in \{1,\ldots,D\}^q} b(k)b(l)\int_{G(d,d-m)} \int_{G(d,d-m)}\int_{B^d_N} \int_{B^d_N} \prod_{i=1}^r \int_{\R^d}\varphi^F_{t,k_i}(\lambda)\overline{\varphi^{F'}_{t',l_i}}(\lambda)\,f\lambda^d(d\lambda)
  \\&\quad\times \prod_{i=r+1}^q\varphi^F_{t,k_i}(a_{i-r})\varphi^{F'}_{t',l_i}(a_{q-2r+i})\,dt \,dt'\,\nu(dF)\,\nu(dF')
  \\&=\frac{1}{N^d\kappa_d}\sum_{k,l\in \{1,\ldots,D\}^q} b(k)b(l)\int_{G(d,d-m)} \int_{G(d,d-m)}\int_{B^d_N} \int_{B^d_N} \prod_{i=1}^r \E[Y^F_{k_i}(t)Y^{F'}_{l_i}(t')]
  \\&\quad\times \prod_{i=r+1}^q\varphi^F_{t,k_i}(a_{i-r})\varphi^{F'}_{t',l_i}(a_{q-2r+i})\,dt \,dt'\,\nu(dF)\,\nu(dF').
\end{align*}
Thus we obtain for the norm
\begin{align*}
  &\|g_{N,q}\otimes_r g_{N,q}\|_{\mathfrak{H}^{\otimes(2q-2r)}}^2
  \\&=\int_{\R^{d(2q-2r)}}\frac{1}{N^d\kappa_d}\sum_{k,l\in \{1,\ldots,D\}^q}b(k)b(l)\int_{G(d,d-m)} \int_{G(d,d-m)}\int_{B^d_N} \int_{B^d_N} \prod_{i=1}^r \E[Y^F_{k_i}(t)Y^{F'}_{l_i}(t')]
  \\&\quad\times \prod_{i=r+1}^q\varphi^F_{t,k_i}(a_{i-r})\varphi^{F'}_{t',l_i}(a_{q-2r+i})\,dt \,dt'\,\nu(dF)\,\nu(dF')
  \\&\quad\times \frac{1}{N^d\kappa_d}\sum_{k,l\in \{1,\ldots,D\}^q} b(k)b(l)\int_{G(d,d-m)} \int_{G(d,d-m)}\int_{B^d_N} \int_{B^d_N} \prod_{i=1}^r \E[Y^F_{k_i}(t)Y^{F'}_{l_i}(t')]
  \\&\quad\times \prod_{i=r+1}^q\overline{\varphi^F_{t,k_i}}(a_{i-r})\overline{\varphi^{F'}_{t',l_i}}(a_{q-2r+i})\,dt \,dt'\,\nu(dF)\,\nu(dF')\,(f\lambda^d)^{2q-2r}(d(a_1,\ldots,a_{2q-2r})).
\end{align*}
And again Fubini's theorem yields equality to
\begin{align*}
  \frac{1}{N^{2d}\kappa_d^2}&\sum_{k,l,k',l'\in \{1,\ldots,D\}^q}b(k)b(l)b(k')b(l')\idotsint_{(G(d,d-m))^4}\idotsint_{(B^d_N)^4}
  \\&\quad\times \prod_{i=r+1}^q\int_{\R^d}\varphi^{F_1}_{t_1,k_i}(\lambda) \overline{\varphi^{F_3}_{t_3,k'_i}}(\lambda)\, f\lambda^d(d\lambda) \int_{\R^d}\varphi^{F_2}_{t_2,l_i}(\lambda)\overline{\varphi^{F_4}_{t_4,l'_i}} (\lambda)\,f\lambda^d(d\lambda)
  \\&\quad\times\prod_{i=1}^r\E[Y^{F_1}_{k_i}(t_1)Y_{l_i}^{F_2}(t_2)] \E[Y^{F_3}_{k'_i}(t_3)Y_{l'_i}^{F_4}(t_4)]  \,dt_1\ldots dt_4 \,\nu(dF_1)\ldots \nu(dF_4),
\end{align*}
which by \eqref{eq:covY} equals
\begin{align*}
  \frac{1}{N^{2d}\kappa_d^2}&\sum_{k,l,k',l'\in \{1,\ldots,D\}^q}b(k)b(l)b(k')b(l')
  \\&\quad\times\idotsint_{(G(d,d-m))^4}\idotsint_{(B^d_N)^4} \prod_{i=1}^r\E[Y^{F_1}_{k_i}(t_1)Y_{l_i}^{F_2}(t_2)]\E[Y^{F_3}_{k'_i}(t_3)Y_{l'_i}^{F_4}(t_4)]
  \\&\quad\times\prod_{i=r+1}^q\E[Y^{F_1}_{k_i}(t_1)Y^{F_3}_{k'_i}(t_3)] \E[Y^{F_2}_{l_i}(t_2)Y^{F_4}_{l'_i}(t_4)]\,dt_1\ldots dt_4 \,\nu(dF_1)\ldots \nu(dF_4).
\end{align*}
By \ref{as:covariance} and stationarity there exists a constant $c>0$ such that for all $t\in \R^d$ and $F,F'\in G(d,d-m)$
\begin{align*}
  \sup_{1\leq i, j \leq D}\left | \E\left[Y_i^{F}(t)Y_j^{F'}(s)\right]\right | \leq c \psi(t-s)
\end{align*}
and hence
\begin{align*}
  \|g_{N,q}&\otimes_r g_{N,q}\|_{\mathfrak{H}^{\otimes(2q-2r)}}^2
  \\&\leq \frac{c^{2q}}{N^{2d}\kappa_d^2}\sum_{k,l,k',l'\in \{1,\ldots,D\}^q}b(k)b(l)b(k')b(l')  \idotsint_{(G(d,d-m))^4}\idotsint_{(B^d_N)^4} \psi(t_1-t_2)^r
  \\&\quad\times\psi(t_3-t_4)^{r}\psi(t_1-t_3)^{q-r}\psi(t_2-t_4)^{q-r}\,dt_1\ldots dt_4 \,\nu(dF_1)\ldots \nu(dF_4)
  \\&=c^{2q}\sum_{k,l,k',l'\in \{1,\ldots,D\}^q}b(k)b(l)b(k')b(l')\\
  &\quad\times\underbrace{\frac{1}{N^{2d}\kappa_d^2}\idotsint_{(B^d_N)^4} \psi(t_1-t_2)^r\psi(t_3-t_4)^{r}\psi(t_1-t_3)^{q-r}\psi(t_2-t_4)^{q-r}\,dt_1\ldots dt_4 }_{:=Z(N)}.
\end{align*}
Using the inequality $a^rb^{q-r}\leq a^q+b^q$ for $a=\psi(t_3-t_4)$ and $b=\psi(t_1-t_3)$, we obtain
\begin{align*}
  Z(N)&\leq \frac{1}{N^{2d}\kappa_d^2}\idotsint_{(B^d_N)^4} \psi(t_1-t_2)^r\psi(t_3-t_4)^q\psi(t_2-t_4)^{q-r}\,dt_1\ldots dt_4
  \\&\quad +\frac{1}{N^{2d}\kappa_d^2}\idotsint_{(B^d_N)^4} \psi(t_1-t_2)^r\psi(t_1-t_3)^q\psi(t_2-t_4)^{q-r}\,dt_1\ldots dt_4.
\end{align*}
By \ref{as:covariance} we have $\infty > c_n:= \int_{\R^d}\psi^n(x)\,dx \geq \int_{B^d_N}\psi^n(x)\,dx$, for $n \in \N$, and therefore obtain the following upper bound for the first summand
\begin{align*}
  \frac{1}{N^{2d}\kappa_d^2}&\idotsint_{(B^d_N)^3} \psi(t_1-t_2)^r\psi(t_2-t_4)^{q-r}\int_{\R^d}\psi(t_3-t_4)^q \,dt_3\,dt_1\,dt_2\, dt_4
  \\&\leq\frac{c_q}{N^{2d}\kappa_d^2}\int_{B^d_N}\int_{B^d_N}\psi(t_1-t_2)^r \int_{\R^d}\psi(t_2-t_4)^{q-r} \,dt_4 \,dt_1\,dt_2.
\end{align*}
Repeating this argument yields the upper bound
\begin{align*}
  \frac{c_qc_{q-r}}{N^{2d}\kappa_d^2}\int_{B^d_N}\int_{\R^d}\psi(t_1-t_2)^r \,dt_1\,dt_2=\frac{c_qc_{q-r}c_r}{N^{2d}\kappa_d^2} N^d\kappa_d.
\end{align*}
Note that $\frac{c_qc_{q-r}c_r}{N^{d}\kappa_d} \stackrel{N\to \infty}{\longrightarrow} 0$. Proceeding analogously for the second summand yields
\begin{align*}
  \|g_{N,q}&\otimes_r g_{N,q}\|_{\mathfrak{H}^{\otimes(2q-2r)}}^2 \leq c^{2q}\sum_{k,l,k',l'\in \{1,\ldots,D\}^q}b(k)b(l)b(k')b(l') Z(N) \stackrel{N\to \infty}{\longrightarrow} 0.
\end{align*}

{\textbf{Condition (iv):}} By orthogonality
\begin{align*}
  \sum_{q=Q+1}^\infty q!\|g_{N,q}\|^2 = \mathcal{H}^d(B^d_N)^{-1}\E\left[ \left(\sum_{q=Q+1}^\infty \sum_{|n|=q}\int_{G(d,d-m)}c(n)\int_{B^d_N}\widetilde H_n(Y^F(t))\,dt \,\nu(dF)\right)^2\right].
\end{align*}
The same computations as in the verification of condition (ii) bound the latter by
\begin{align*}
  &\frac{2^dK}{\rho} \frac{\#\{\Z^d\cap B^d_{N+d}\}}{\mathcal{H}^d(B^d_N)}\int_{\R^d}\psi(t)dt \sum_{q=Q+1}^\infty q^D(c\rho)^q + \frac{2(2s+1)^d}{\mathcal{H}^d(B^d_N)}\\
  &\quad\times\sum_{z\in \Z^d\cap B^d_{N+d}}\E\left[ \left(\sum_{q=Q+1}^\infty \sum_{|n|=q}\int_{G(d,d-m)}c(n)\int_{[0,1)^d}\ind\{t+z\in B^d_N\}\widetilde H_n(Y^F(t))\,dt \,\nu(dF)\right)^2\right].
\end{align*}
In the limit $N\to\infty$ and then $Q\to \infty$ the first summand vanishes, since the series is the tail of a convergent series. The second summand needs more attention. We first split the summation into one over the indices $I_1^N:=\{z\in \Z^d\cap B^d_{N+d}\mid z+[0,1)^d\subset B^d_N\}$ and one over $I_2^N:=\{z\in \Z^d\cap B^d_{N+d}\mid z+[0,1)^d\cap (B^d_N)^\mathsf{c}\neq \emptyset\}$. The sum over $I_1^N$ can be bounded by
\begin{align*}
  2(2s+1)^d\E\left[ \left(\sum_{q=Q+1}^\infty \sum_{|n|=q}\int_{G(d,d-m)}c(n)\int_{[0,1)^d}\widetilde H_n(Y^F(t))\,dt \,\nu(dF)\right)^2\right]
\end{align*}
since $\#I_1^N\mathcal{H}^d(B^d_N)^{-1}\leq 1$. Hence the latter tends to zero for $Q\to \infty$ by Lemma \ref{thm:L^2Approx}. We bound the summation over $I_2^N$ by
\begin{align*}
  \frac{2(2s+1)^d\#I_2^N}{\mathcal{H}^d(B^d_N)} \sup_{W \subset [0,1)^d\mathrm{\, convex}}\E\left[ \left(\sum_{q=1}^\infty \sum_{|n|=q}\int_{G(d,d-m)}c(n)\int_{W}\widetilde H_n(Y^F(t))\,dt \,\nu(dF)\right)^2\right].
\end{align*}
Lemma \ref{lem:auxBounds}\ref{lem:uniBound} yields the upper bound
\begin{align*}
  c2(2s+1)^d \#I_2^N \mathcal{H}^d(B^d_N)^{-1},
\end{align*}
which vanishes for $N\to \infty$. This shows the assertion.

\section{A lower bound for the asymptotic variance}

We follow the lines of \cite [Lemma 2.2]{paper:EstradeLeon} and give a lower bound for the asymptotic variance.
\begin{lemma}\label{lem:lowerBound}
  Let $X$ be a real Gaussian field on $\R^d$, which satisfies the assumptions (A1)--(A3). We then have for $m\in \{0,\ldots,d-1\}$ and $\sigma_m^2$ given as in Theorem \ref{thm:mainTheorem} that
  \begin{align*}
    \sigma^2_m \geq {d \brack d-m }^2(2\pi)^mf(0)H_{d-m}(u)^2\phi(u)^2.
  \end{align*}
\end{lemma}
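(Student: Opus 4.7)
My plan is to use orthogonality of Wiener chaoses to reduce the bound to the computation of the first chaos contribution. From the verification of condition (i) of Theorem \ref{thm:NourdinPeccati} carried out above, $\sigma_m^2 = \sum_{q \ge 1}\sigma_{m,q}^2$ with $\sigma_{m,q}^2 = \lim_{N\to\infty} q!\|g_{N,q}\|_{\mathfrak{H}^{\otimes q}}^2 \ge 0$, hence $\sigma_m^2 \ge \sigma_{m,1}^2$ and it suffices to establish the lower bound for the $q=1$ contribution. By the It\^o isometry, $\sigma_{m,1}^2$ equals the asymptotic variance per unit volume of the projection $\pi_1\zeta_{m,B^d_N}$ of $\zeta_{m,B^d_N}$ onto the first Wiener chaos, and by Theorem \ref{thm:L^2Approx} one has
\[
\pi_1\zeta_{m,B^d_N}=\sum_{j=1}^D c(e_j)\int_{G(d,d-m)}\int_{B^d_N} Y^F_j(t)\,dt\,\nu(dF).
\]

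I would rewrite this in the correlated basis via $Y^F(t)=\Lambda^{-1}\mathcal{X}^F(t)$, setting $\tilde c:=\Lambda^{-\top}c$, to obtain
\[
\pi_1\zeta_{m,B^d_N} = \int_{G(d,d-m)}\int_{B^d_N}\sum_{j=1}^D \tilde c_j\,\mathcal{X}^F_j(t)\,dt\,\nu(dF).
\]
The explicit formula \eqref{def:coefc} shows that $c(e_j)=0$ for $j\le d-m$ because the factor $H_1(0)=0$ enters the gradient part of the integrand; combined with the block-diagonal form of $\Lambda$ from Lemma \ref{lem:decom} and the lower triangularity of $\Lambda_2$, this also forces $\tilde c_j=0$ for $j\le d-m$. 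For indices $j\in\{d-m+1,\ldots,D-1\}$ the component $\mathcal{X}^F_j(t)$ is a directional second derivative $\partial^2_{v_iv_j}X(t)$, whose integral over $B^d_N$ reduces by the divergence theorem to a surface integral on $S^{d-1}_N$; bounding its variance via \ref{as:covariance} yields $O(N^{d-1})$, so these terms contribute $0$ to the variance per unit volume in the limit. Only $j=D$ survives, and since $\mathcal{X}^F_D(t)=X(t)$ is $F$-independent the $F$-integration produces the total mass ${d\brack d-m}$ of $\nu$. Using $\mathrm{Var}(\int_{B^d_N}X(t)\,dt)/\mathcal{H}^d(B^d_N)\to\int_{\R^d}\Co^X(t)\,dt=(2\pi)^df(0)$ (Fourier inversion and \ref{as:covariance}), this yields
\[
\sigma_{m,1}^2 = {d\brack d-m}^2 \tilde c_D^2\,(2\pi)^d f(0).
\]

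The remaining step is to evaluate $\tilde c_D$ in closed form. The Cholesky structure of $\Lambda$ reduces $\tilde c_D = \sum_k (\Lambda^{-1})_{kD} c(e_k)$ to the single nonzero term $\tilde c_D = c(e_D)/\tau$, where $\tau:=(\Lambda_2)_{D-(d-m),D-(d-m)}$ is the conditional standard deviation of $X(0)$ given $\dot H:=(\partial^2_{v_iv_j}X(0))_{1\le i\le j\le d-m}$. Substituting \eqref{def:coefc}, performing the change of variables $z=\Lambda_2 y$ back to the correlated Gaussian vector $(\dot H,X(0))$, and applying the Stein-type identity $\int_a^\infty z\phi(z)\,dz=\phi(a)$ after conditioning on $\dot H$, a short computation yields
\[
c(e_D) = (-1)^{d-m}(2\pi)^{-(d-m)/2}\tau\phi(u)\,\mathbb{E}[\det\dot H\,|\,X(0)=u].
\]
The classical identity $\mathbb{E}[\det\dot H\,|\,X(0)=u]=(-1)^{d-m}H_{d-m}(u)$ is the decisive input. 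It follows from the structural observation that $\mathrm{Cov}(\dot H_{ij},X(0))=-\delta_{ij}$ (by \ref{as:dif}), so the shifted matrix $M:=\dot H+X(0)\,I_{d-m}$ is centered and independent of $X(0)$; hence $\dot H\,|\,X(0)=u\stackrel{d}{=}-uI_{d-m}+M$ and $\mathbb{E}[\det(-uI_{d-m}+M)]$ is an expected characteristic polynomial whose dependence on the 4th-order spectral parameter cancels, producing precisely $(-1)^{d-m}H_{d-m}(u)$ thanks to the isotropy-forced structure $\mathrm{Cov}(M_{ij},M_{kl})=\alpha(\delta_{ij}\delta_{kl}+\delta_{ik}\delta_{jl}+\delta_{il}\delta_{jk})-\delta_{ij}\delta_{kl}$ (cf.\ \cite[Ch.~11]{book:AdlerTaylor}). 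Combining these ingredients gives $\tilde c_D = (2\pi)^{-(d-m)/2}H_{d-m}(u)\phi(u)$ and substitution into the formula for $\sigma_{m,1}^2$ produces the claimed lower bound.

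The main obstacle is the clean, uniform-in-$F$ control of the boundary contributions arising from the derivative components of $\mathcal{X}^F$; once this is granted, the remaining steps are algebra modulo the expected determinant identity, which is itself the algebraic heart of the Gaussian kinematic formula and admits direct verification using the covariance structure of $M$.
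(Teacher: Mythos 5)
Your proposal is correct and follows the same overall skeleton as the paper: lower-bound $\sigma_m^2$ by the first-chaos contribution $\sigma_{m,1}^2$, observe that only the $X(t)$-component of $\mathcal{X}^F$ survives asymptotically, and then evaluate the single remaining coefficient $c(e_D)$; your final identities $\tilde c_D=c(e_D)/\alpha$ and $\sigma_{m,1}^2={d \brack d-m}^2\tilde c_D^2(2\pi)^df(0)$ agree with the paper's $\sigma_{m,1}^2={d \brack d-m}^2c(e_D)^2(\Lambda^{-1}_{D,D})^2(2\pi)^df(0)$. You diverge in two sub-steps. First, to discard the second-derivative components you apply the divergence theorem on $B^d_N$ and claim the resulting sphere integral has variance $O(N^{d-1})$; this needs $\sup_t\int_{S^{d-1}_N}\psi(t-s)\,\mathcal{H}^{d-1}(ds)=O(1)$, which does not follow from \ref{as:covariance} alone ($\psi\in L^1(\R^d)$ with $\psi\to0$ does not control integrals over hypersurfaces), whereas the crude bound $O(N^{2(d-1)})$ is not $o(N^d)$ for $d\geq2$. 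The paper avoids this by showing directly that $\int_{\R^d}\E[\mathcal{X}^F_k(t)\mathcal{X}^{F'}_l(0)]\,dt=0$ whenever a derivative is involved, via Fubini and the fundamental theorem of calculus together with the decay of $\Co^X$ — a cleaner route you should substitute (it also handles the gradient components and the cross terms uniformly). Second, for $c(e_D)$ you change variables back to the correlated vector, apply Stein's identity, and invoke the classical expected-characteristic-polynomial identity $\E[\det\dot H\mid X(0)=u]=(-1)^{d-m}H_{d-m}(u)$; the paper instead stays in decorrelated coordinates and uses the Hermite expansion of $y\mapsto\det(Ly)$ and the convolution identities of Estrade--Le\'on. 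These are equivalent in content — your route is arguably more conceptual (it isolates the Gaussian-kinematic-formula identity explicitly, cf.\ \cite[Lemma 11.7.1]{book:AdlerTaylor}), while the paper's is self-contained modulo the cited appendix lemmata. With the first sub-step repaired as indicated, your argument is complete.
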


\begin{proof}
  Recall that according to Theorem \ref{thm:NourdinPeccati} the asymptotic variance is given by $\sum_{q\geq 1}\sigma^2_{m,q}$, where $\sigma_{m,q}^2$ is defined as the limit in condition (i) of that theorem. Hence, we obtain a lower bound for the asymptotic variance by computing $\sigma_{m,1}^2$. By (\ref{eq:asymptoticVariance})
  \begin{align*}
    \sigma_{m,1}^2=\sum_{k,l\in\{1,\ldots, D\}}b(k)b(l)\int_{\R^d}\int_{G(d,d-m)}\int_{G(d,d-m)} \E\left[Y^F_{k}(t)Y^{F'}_{l}(0)\right]\nu(dF)\nu(dF')\,dt,
  \end{align*}
  where the coefficients $b(\cdot)$ are given by
  \begin{align*}
    b(k)=\sum_{n \in \N^D,|n|=1} \ind\{k\in \A_{n}\} \frac{c(n)}{|\A_{n}|}.
  \end{align*}
  The sets $\A_{n}$ consist of only one element, namely the number of the component of $n$, which contains the $1$. Thus if we write $e_{i}\in \R^{D}$ for the vector, whose components are $0$ except for the $i$-th component, which is $1$, we obtain
  \begin{align*}
    b(k)=c(e_k).
  \end{align*}
  By the definition of the coefficients $c(\cdot)$, cf. (\ref{def:coefc}), we see that $c(e_k)=0$ for $k=1,\ldots,d-m$ and therefore obtain
  \begin{align*}
    \sigma_{m,1}^2=\sum_{k,l=d-m}^D c(e_k)c(e_l)\int_{\R^d}\int_{G(d,d-m)}\int_{G(d,d-m)} \E\left[Y^F_{k}(t)Y^{F'}_{l}(0)\right]\,\nu(dF)\,\nu(dF')\,dt.
  \end{align*}
  We now show that
  \begin{align*}
    \int_{\R^d}\E\left[\mathcal{X}^F_{k}(t)\mathcal{X}^{F'}_{l}(0)\right] \,dt = (2\pi)^d f(0)\delta_{k,l}(D,D),
  \end{align*}
  for $F,F'\in A(d,d-m)$ and $k,l=1,\ldots, D$.
  Consider the case $(k,l)=(D,D)$. Then the equality
  $\E\left[\mathcal{X}^F_{D}(t)\mathcal{X}^{F'}_{D}(0)\right] = \E\left[X(t)X(0)\right] = (2\pi)^{d/2}\mathcal{F}(f)(t)$ holds,
  where $\mathcal{F}$ denotes the Fourier transformation. By \ref{as:covariance} the spectral density $f$ is continuous and $\E\left[X(t)X(0)\right]$ is integrable, which yields that $\int_{\R^d}\E\left[X(t)X(0)\right] dt=(2\pi)^df(0)$, via the Fourier cotransformation. In the cases, where $(k,l)\neq (D,D)$, at least one of the factors $\mathcal{X}^F_{D}$ or $\mathcal{X}^{F'}_{D}$ is at least one directional derivative of the field $X$, say in direction $u\in S^{d-1}$. This yields that $\E\left[\mathcal{X}^F_{k}(t)\mathcal{X}^{F'}_{l}(0)\right]$ equals, up to a power of $-1$, the function $\frac{\partial}{\partial u} g$, where $g$ is either the covariance function or a derivative of it. Thus by Fubini's theorem, we conclude that
  \begin{align*}
    \int_{\R^d}\E\left[\mathcal{X}^F_{k}(t)\mathcal{X}^{F'}_{l}(0)\right] dt &= \int_{\R}\ldots \int_{\R} \frac{\partial}{\partial u} g(t_1,\ldots,t_d) \,dt_1 \ldots dt_d \\
    &=\sum_{i=1}^d u^{(i)} \int_{\R}\ldots \int_{\R} \frac{\partial}{\partial t_i} g(t_1,\ldots,t_d) \,dt_i\, dt_{1} \ldots\overline{dt_i}\ldots dt_d \\
    &= \sum_{i=1}^d u^{(i)} \int_{\R}\ldots \int_{\R} \left .g(t_1,\ldots,t_d)\right |_{t_i=-\infty}^\infty \,dt_{1} \ldots\overline{dt_i}\ldots dt_d\\
    &=0,
  \end{align*}
  where we used in the last line assumption \ref{as:covariance}, that is, the covariance function and its derivatives tend to 0 for $\|t\|\to \infty$.

  The definition of $Y$, cf. \eqref{def:Y}, implies
  \begin{align*}
    \E\left[Y^F_{k}(t)Y^{F'}_{l}(0)\right]= \sum_{r=1}^D\sum_{s=1}^D\Lambda_{l,r}^{-1}\Lambda_{k,s}^{-1} \E\left[\mathcal{X}^F_{r}(t)\mathcal{X}^{F'}_{s}(0)\right],
  \end{align*}
  which yields
  \begin{align*}
    \int_{\R^d} \E\left[Y^F_{k}(t)Y^{F'}_{l}(0)\right]\, dt = \Lambda_{l,D}^{-1}\Lambda_{k,D}^{-1} (2\pi)^df(0)
  \end{align*}
  and we conclude with Fubini's theorem, that
  \begin{align*}
    \sigma_{m,1}^2={d \brack d-m }^2\sum_{k,l=d-m}^D c(e_k)c(e_l)\Lambda_{l,D}^{-1}\Lambda_{k,D}^{-1} (2\pi)^df(0) = {d \brack d-m }^2c(e_D)^2(\Lambda^{-1}_{D,D})^2(2\pi)^df(0),
  \end{align*}
  where the last equality holds since $\Lambda$ is lower triangular.
  In order to calculate the coefficients $c(e_D)$, we have to analyse the covariance structure of $\mathcal{X}^F$. We first write the $K+1:=(d-m)(d-m+1)/2+1$ last coordinates of $\mathcal{X}^F$ in the order
  \begin{align*}
    \left( \left (\frac{\partial^2}{\partial v_i\partial v_j} X \right)_{1\leq i < j\leq d-m}, \left(\frac{\partial^2}{\partial v_i\partial v_i} X\right)_{i=1}^{d-m} , X   \right).
  \end{align*}
  Thus, using stationarity, isotropy and $\Co^X(0)=1$, the covariance matrix of this vector at $0$ is given by
  \begin{align*}
    \begin{pmatrix}
      \Co\left(\left(\frac{\partial^2}{\partial t_i\partial t_j} X\right )_{ i < j}\right) & \Co\left(\left (\frac{\partial^2}{\partial t_i\partial t_j} X \right)_{i< j}, \left(\frac{\partial^2}{\partial t_i\partial t_i} X\right)_{i=1}^{d-m}\right) & 0 \\
      \Co\left(\left(\frac{\partial^2}{\partial t_i\partial t_i} X\right)_{i=1}^{d-m},\left (\frac{\partial^2}{\partial t_i\partial t_j} X \right)_{i < j} \right)& \Co\left(\left(\frac{\partial^2}{\partial t_i\partial t_i} X\right)_{i=1}^{d-m}\right) & -1 \\
      0 & -1 & 1 \\
    \end{pmatrix},
  \end{align*}
  which equals the product $\Lambda_2\Lambda_2^\top$, where $\Lambda_2\in\R^{K+1}$ is a lower triangular matrix, given in Lemma \ref{lem:decom}. We choose the matrix $L\in \R^{K\times K}$, the vector $l\in \R^{K}$ and $\alpha>0$ such that
  \begin{align*}
    \Lambda_2=\begin{pmatrix}
                L & 0 \\
                l^\top& \alpha \\
              \end{pmatrix}.
  \end{align*}
  Then the relation $\|l\|^2+\alpha^2 =1$ holds as well as $Ll=( 0_{1\times K}, -1_{1\times d-m}).$ With this specific representation of $\Lambda_2$, we have
  \begin{align*}
    c(e_D)&=(2\pi)^{-(d-m)/2}(-1)^{d-m}\int_{\R^{K}\times \R} \det(Ly)\ind\{\langle l,y\rangle +\alpha z \geq u\}z\phi_{K}(y)\phi(z) \,d(y,z)\\
    &=-(2\pi)^{-(d-m)/2}(-1)^{d-m}\int_{\R^{K}\times \R} \det(Ly)\ind\{\langle l,y\rangle +\alpha z \geq u\}\phi_{K}(y)\phi'(z) \,d(y,z)\\
    &=(2\pi)^{-(d-m)/2}(-1)^{d-m}\int_{\R^{K}} \det(Ly)\phi_{K}(y)\phi\left(\alpha^{-1}(u-\langle l,y\rangle)\right) \,dy,
  \end{align*}
  where we used that $z\phi(z)=-\phi'(z)$ in the first line and Fubini's theorem in the second. Using the Hermite expansion of $y\mapsto \det(Ly)$ given in \cite [Lemma A.2]{paper:EstradeLeon}, we obtain
  \begin{align*}
    c(e_D)&=(2\pi)^{-(d-m)/2}(-1)^{d-m} \sum_{\substack{m\in \N^{K},|m|=d-m}}\beta_m\int_{\R^K}\widetilde{H}_m(y)\phi_{K}(y)\phi\left(\alpha^{-1}(u-\langle l,y\rangle)\right) \,dy\\
    &=(2\pi)^{-(d-m)/2} \sum_{\substack{m\in \N^{K},|m|=d-m}}\beta_m \int_{\R^K}D^m\phi_{K}(y)\phi\left(\alpha^{-1}(u-\langle l,y\rangle)\right) \,dy,
  \end{align*}
  where $D^m\phi$ denotes $\frac{\partial^{|m|}}{\partial t_{m_1}\ldots \partial t_{m_{K}}}\phi$ and $\beta_m$ are real coefficients. Following the argument in \cite{paper:EstradeLeon}, we define $h\colon \R^K\to \R, x\mapsto \phi(\alpha^{-1}\langle l,y\rangle)$ and choose $l^\prime$ such that $\langle l , l^\prime\rangle=1$. We then obtain
  \begin{align*}
    \int_{\R^K}D^m\phi_{K}(y)\phi\left(\alpha^{-1}(u-\langle l,y\rangle)\right) \,dy = (h\ast D^m\phi_K)(ul^\prime)=D^m(h \ast \phi_K)(ul^\prime).
  \end{align*}
  By \cite [Remark A.4]{paper:EstradeLeon}, which reads $(h \ast \phi_K)(y)= \alpha\phi(\langle l,y\rangle)$ for $y\in \R^K$, we obtain
  \begin{align*}
    D^m(h \ast \phi_K)(y)=\alpha l^{(m)} \phi^{(d-m)}(\langle l , y\rangle)=(-1)^{d-m}\alpha l^{(m)} H_{d-m}(\langle l,y\rangle)\phi(\langle l,y\rangle).
  \end{align*}
  Thus by \cite [Lemma A.2]{paper:EstradeLeon} in the second equality
  \begin{align*}
    c(e_D)&= (2\pi)^{-(d-m)/2} \sum_{\substack{m\in \N^{K}\\|m|=d-m}}\beta_m l^{(m)} (-1)^{d-m}\alpha H_{d-m}(u)\phi(u) \\
    &= (2\pi)^{-(d-m)/2}\det(Ll)(-1)^{d-m}\alpha H_{d-m}(u)\phi(u).
  \end{align*}
  Note that the $K$-dimensional vector $Ll$ corresponds to the symmetric $(d-m)\times (d-m)$\nobreakdash-\hspace{0pt}matrix, whose nondiagonal entries are given by the first $(d-m)(d-m-1)/2$ entries of $Ll$ and whose diagonal is given by the $d-m$ last entries of $Ll$, thus $\det(Ll)=(-1)^{d-m}$. Hence we obtain
  \begin{align*}
    c(e_D)=(2\pi)^{-(d-m)/2}\alpha H_{d-m}(u)\phi(u)
  \end{align*}
  and therefore conclude as asserted
  \begin{equation*}
    \sigma_{m,1}^2={d \brack d-m }^2(2\pi)^{m}f(0)H_{d-m}(u)^2\phi(u)^2.\qedhere
  \end{equation*}
  %If \qedhere isn't available use \tag*{\qed}
\end{proof}

\appendix
\section{Proofs of statements holding almost surely}
\begin{lemma}\label{lem:pre}
  Let $X\colon \Omega\times \R^d\to \R$ be an almost surely of class $C^2$, stationary Gaussian field satisfying \ref{as:nondegenerate}. Then for almost all $\omega\in\Omega$ there is a measurable set $A'(\omega)\subset A(d,d-m)$, where $\mu(A'(\omega)^\mathsf{c})=0$, such that
  \begin{align*}
    &\mathbb{P}\left(\exists F\in A' \exists t\in \overline{B^d_N}\cap F: \nabla(X|_F)(t)=0,
    X(t)=u\right )=0,
    \\&\mathbb{P}\left(\exists F\in A' \exists t\in \overline{B^d_N}\cap F: \nabla(X|_F)(t)=0,\det(D^2(X|_F)(t))=0\right )=0,\\
    &\mathbb{P}\left(\exists F\in A' \exists t\in \partial\overline{B^d_N}\cap F: \nabla(X|_F)(t)=0\right )=0.
  \end{align*}
\end{lemma}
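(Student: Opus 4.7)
My plan is to verify each of the three probability-zero statements separately for fixed $F \in A(d,d-m)$ by a Bulinskaya/Morse-type argument, and then exchange quantifiers via Fubini on $\Omega \times A(d,d-m)$ equipped with $\mathbb{P}\otimes\mu$. Concretely, I will show that the bad sets
\begin{align*}
  B_1&:=\{(\omega,F):\exists t\in \overline{B^d_N}\cap F\colon\nabla(X|_F)(t)=0,\,X(t)=u\},\\
  B_2&:=\{(\omega,F):\exists t\in \overline{B^d_N}\cap F\colon\nabla(X|_F)(t)=0,\,\det D^2(X|_F)(t)=0\},\\
  B_3&:=\{(\omega,F):\exists t\in \partial\overline{B^d_N}\cap F\colon\nabla(X|_F)(t)=0\}
\end{align*}
are jointly null; sectioning in $\omega$ then produces, for $\mathbb{P}$-almost all $\omega$, a measurable $A'(\omega)\subset A(d,d-m)$ with $\mu(A'(\omega)^\mathsf{c})=0$ on which none of the three exceptional configurations occur, which is the claim.

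For fixed $F$, the first and third events are handled by a Bulinskaya-type lemma in the spirit of \cite[Lem.\ 11.2.10]{book:AdlerTaylor}: if a $\mathcal{C}^1$ random field $Z$ on a compact $k$-dimensional domain $K$ takes values in $\R^\ell$ with $\ell>k$ and has densities bounded uniformly in $t\in K$, then $\mathbb{P}(\exists t\in K:Z(t)=z_0)=0$ for every fixed $z_0$. For $B_1^F$ I apply this to $Z=(X,\nabla(X|_F))$ on $\overline{B^d_N}\cap F$ (source dimension $d-m$, target dimension $d-m+1$, $z_0=(u,0)$); uniform boundedness of the joint density follows from \eqref{eq:covFirstDer}, which gives $\nabla(X|_F)(t)\sim\mathcal{N}(0,I_{d-m})$ and independent of $X(t)$ by stationarity. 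For $B_3^F$ I apply the same lemma to $Z=\nabla(X|_F)$ on the $(d-m-1)$-dimensional compact set $\partial\overline{B^d_N}\cap F$ (which is transversal for $\mu$-a.a.\ $F$) with target $\R^{d-m}$ and $z_0=0$.

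For the second event I invoke the Morse property of Gaussian fields \cite[Cor.\ 11.3.2]{book:AdlerTaylor}: under \ref{as:dif}--\ref{as:nondegenerate} the restricted field $X|_F$ is almost surely a Morse function on the $\mathcal{C}^2$ manifold-with-boundary $\overline{B^d_N}\cap F$, so in particular has no degenerate critical points. Combining the three pointwise-in-$F$ conclusions by Fubini gives $(\mathbb{P}\otimes\mu)(B_1\cup B_2\cup B_3)=0$, and the $\omega$-section of this product-measurable null set delivers the required $A'(\omega)$, its measurability being the standard Fubini conclusion. The main obstacle I anticipate is the single-point non-degeneracy needed to invoke Morse theory at the level of a fixed $F$: condition \ref{as:nondegenerate} is stated for two distinct points $t\neq 0$, and the covariance matrix of the two-point Gaussian vector appearing there becomes singular in the coincident limit. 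The standard resolution is to isolate the block $\nabla(X|_F)(t)$, which is automatically $\mathcal{N}(0,I_{d-m})$ and independent of $(X(t),D^2(X|_F)(t))$ by \ref{as:dif} and \eqref{eq:covFirstDer}, and to apply the Morse/Bulinskaya non-degeneracy argument only to the remaining finite-dimensional Gaussian block $(X(t),D^2(X|_F)(t))$, whose full rank follows from \ref{as:dif}--\ref{as:nondegenerate}.
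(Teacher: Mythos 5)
Your overall architecture is exactly the paper's: for fixed $F$ the three exceptional events are killed by the Bulinskaya-type lemmas of Adler--Taylor (the paper cites Lemmas 11.2.10--11.2.12 of \cite{book:AdlerTaylor} for the three statements respectively, which is the same content as your Bulinskaya/Morse argument), then Tonelli on $\Omega\times A(d,d-m)$ with $\mathbb{P}\otimes\mu$, then passage to $\omega$-cross sections of the resulting null set to produce $A'(\omega)$. Your handling of the single-point non-degeneracy (extracting it from \ref{as:nondegenerate} by restriction/stationarity, and using the independence of $\nabla(X|_F)(t)$ from $(X(t),D^2(X|_F)(t))$) is also consistent with how the paper proceeds.

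The one substantive point you dismiss too quickly is the joint $(\omega,F)$-measurability of the sets $B_1,B_2,B_3$. Before you may integrate $\ind_{B_i}$ over the product space, you must know $B_i\in\mathcal{F}\otimes\mathcal{B}(A(d,d-m))$; this is not automatic, because each $B_i$ is an uncountable union over $t$ ranging in a set $\overline{B^d_N}\cap F$ that itself varies with $F$. Your remark that ``its measurability being the standard Fubini conclusion'' conflates two things: the measurability of the $\omega$-section of a product-measurable set is indeed standard, but the product-measurability of $B_i$ itself is the part that needs an argument, and it is where the paper's proof spends most of its effort. The paper resolves it by introducing the Carath\'eodory-type map $f_t(\omega,F)=\bigl(\pi_{F^\circ}(\nabla X(\omega)(t)),\det(\pi_{F^\circ}\circ D^2X(\omega)(t)\circ\pi_{F^\circ})\bigr)$, which is measurable in $\omega$ and continuous in $F$, and then writing the exceptional set as $\bigcap_{n}\bigcup_{t\in I} f_t^{-1}(B^{d+1}_{1/n})\cap(\Omega\times\mathcal{F}^d_{B^d_{1/n}(t)})$ for a countable dense $I\subset\overline{B^d_N}$, exploiting continuity in $t$ and the openness of the flats hitting a small ball. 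Your proof would be complete with this (or an equivalent) measurability argument inserted; without it the Fubini step is not justified.
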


\begin{proof}
  We show the details for the second equality. By an application of \cite [Lemma 11.2.11]{book:AdlerTaylor}, choose $T:=\overline{ B^{d-m}_{c(F,N)}}$, where $c(F,N)$ denotes the radius of $B^d_N\cap F$, $f:=\nabla X^F_{v(F)}$, $u:=0$, cf. (\ref{def:XRotiert}), we obtain
  \begin{align*}
    \mathbb{P}(\exists t\in \overline{ B^{d-m}_{c(F,N)}}: \nabla X^F_{v(F)}(t)=0,\det(D^2X^F_{v(F)}(t))=0) =0
  \end{align*}
  which yields by Fubini
  \begin{align*}
    \E&\left[\int_{A(d,d-m)}\ind \{\exists t\in \overline{B^d_N}\cap F: \nabla(X|_F)(t)=0,\det(D^2(X|_F)(t))=0\} \,\mu(dF) \right]\\
    &=\int_{A(d,d-m)}\mathbb{P}(\exists t\in \overline{B^d_N}\cap F: \nabla(X|_F)(t)=0,\det(D^2(X|_F)(t))=0) \,\mu(dF)\\
    &=\int_{A(d,d-m)}\mathbb{P}(\exists t\in \overline{ B^{d-m}_{c(F,N)}}: \nabla X^F_{v(F)}(t)=0,\det(D^2X^F_{v(F)}(t))=0) \,\mu(dF)\\
    &=0.
  \end{align*}
  In order to show that the above integrand is $\mathbb{P}\otimes \mu$ measurable, we define for $t\in \R^d$ the function $f_t\colon \Omega\times A(d,d-m) \to \R^{d+1}$ by
  \begin{align*}
    f_t(\omega,F):=\left (\pi_{F^\circ}(\nabla (X(\omega))(t)),\det(\pi_{F^\circ}\circ D^2 (X(\omega))(t)\circ \pi_{F^\circ})\right ),
  \end{align*}
  where $\pi_{F^\circ}$ denotes the projection onto $F^\circ$. Note that $f_t(\cdot,F)$, $F\in A(d,d-m)$, is measurable and $f_t(\omega,\cdot)$, $\omega \in \Omega$, is continuous, which implies that $f_t$ is measurable. Moreover the function $f_\cdot(\omega,F)$ is continuous on $\R^d$ and for $t\in F$ the equalities $\pi_{F^\circ}(\nabla (X(\omega))(t))=\nabla(X(\omega)|_F)(t)$ and $\pi_{F^\circ}\circ D^2 (X(\omega))(t)\circ \pi_{F^\circ}=D^2(X(\omega)|_F)(t)$ hold.
  %, cf. \cite [Lemma 14.75]{book:AliprantisBorder}.
  We deduce the measurability of the integrand from the following
  \begin{align*}
    &\{(\omega,F)\in \Omega\times A(d,d-m)\colon\exists t\in \overline{B^d_N}\cap F: \nabla(X(\omega)|_F)(t)=0,\det(D^2(X(\omega)|_F)(t))=0\}\\
    &=\bigcap _{n\in \N} \bigcup_{t\in  I} \left \{ (\omega,F)\in \Omega\times A(d,d-m)\colon f_t(\omega,F)\in B^{d+1}_{\frac{1}{n}}, B^d_{\frac{1}{n}}(t)\cap F\neq \emptyset \right\}\\
    &=\bigcap _{n\in \N} \bigcup_{t\in I} f_t^{-1}\left ( B^{d+1}_{\frac{1}{n}}\right ) \cap \left ( \Omega \times (\mathcal{F}_{B^d_{\frac{1}{n}}(t)}^d\cap A(d,d-m))\right),
  \end{align*}
  where $I$ denotes a dense subset of $\overline{B^d_{N}}$ and $\mathcal{F}_{B^d_{\frac{1}{n}}(t)}^d\cap A(d,d-m)$ is open in $A(d,d-m)$, cf. the discussion after \cite [Theorem 13.2.5]{book:SchneiderWeil}.

  Thus we obtain the existence of the set $B_2\in \mathcal{F}\otimes \mathcal{B}(A(d,d-m))$, such that $\mathbb{P}\otimes \mu(B_2^\mathsf{c})=0$ and for all $(\omega,F)\in B_2$, we have that
  \begin{align*}
    \ind&\{\exists t\in \overline{B^d_N}\cap F: \nabla(X(\omega)|_F)(t)=0 ,\det(D^2(X(\omega)|_F)(t))=0\}=0\\
    &\Leftrightarrow \forall t\in \overline{B^d_N}\cap F: \lnot(\nabla(X(\omega)|_F)(t)=0 ,\det(D^2(X(\omega)|_F)(t))=0).
  \end{align*}
  We now define for $\omega\in \Omega$ the $\omega$-cross section of $B_2$ as
  \begin{align*}
    B_{2,\omega}:=\{F\in A(d,d-m)\mid (\omega,F)\in B\}
  \end{align*}
  and observe, cf. \cite [Thm. 1.22]{book:EvansGariepy}, that for almost all $\omega \in \Omega$
  \begin{align*}
    \mu(B_{2,\omega}^\mathsf{c})=0.
  \end{align*}
  Similar reasoning, except that we use \cite [Lemma 11.2.10]{book:AdlerTaylor} and \cite [Lemma 11.2.12]{book:AdlerTaylor}, yields sets $B_1,B_3 \in \mathcal{F}\otimes \mathcal{B}(A(d,d-m))$ and cross sections $B_{1,\omega}, B_{3,\omega}$, whose complements have $\mu$ measure zero for almost all $\omega \in \Omega$. Thus for almost all $\omega$ the complement of $A'(\omega):=\cap_{i=1}^3B_{i,\omega}$ has $\mu$ measure zero, and we conclude
  \begin{align*}
    \mathbb{P}&(\exists F \in A' \exists t\in \overline{B^d_N}\cap F:\nabla(X(\omega)|_F)(t)=0 ,\det(D^2(X(\omega)|_F)(t))=0 )\\
    &=\mathbb{P}(\omega \in \Omega \mid \exists F \in A(d,d-m): (w,F)\in \cap_{i=1}^3B_{i} \\ &\hspace{2.4cm}\exists t\in \overline{B^d_N}\cap F:\nabla(X(\omega)|_F)(t)=0 ,\det(D^2(X(\omega)|_F)(t))=0)\\
    &=\mathbb{P}(\emptyset)=0.
  \end{align*}
  And analogously
  \begin{align*}
    &\mathbb{P}(\exists F\in A' \exists t\in \overline{B^d_N}\cap F: \nabla(X|_F)(t)=0, X(t)=u)=0,\\
    &\mathbb{P}(\exists F\in A' \exists t\in \partial \overline{B^d_N}\cap F: \nabla(X|_F)(t)=0)=0.\qedhere
  \end{align*}
\end{proof}

\begin{lemma} \label{lem:trafo}
  Let $F\in A(d,d-m)$ and $W\subset \R^d$ be convex and bounded. Moreover let $X\colon \Omega\times \R^d\to \R$ be a Gaussian field satisfying the assumptions:
  \begin{enumerate}[label={\normalfont(\roman*)}]
    \item $X$ has almost surely $\mathcal{C}^2$ paths.
    \item There are almost surely no points $t\in \overline{W}\cap F$
    \begin{enumerate}[label={\normalfont(\alph*)}]
      \item such that $\nabla (X|_F)(t)=0$ and $X(t)=u$.
      \item such that $\nabla (X|_F)(t)=0$ and $\det(D^2 (X|_F))=0$.
    \end{enumerate}
    \item There are almost surely no points $t\in \partial W \cap F$ with $\nabla( X|_F)(t)=0$.
  \end{enumerate}
  Then
  \begin{align*}
    &\#\{t\in W\cap F:X(t)\geq u, \nabla (X|_F)(t)=0,\iota_{-X,W\cap F}(t) \mathrm{\, even}\}\\
  &- \#\{t\in W\cap F:X(t)\geq u, \nabla (X|_F)(t)=0,\iota_{-X,W\cap F}(t) \mathrm{\, odd}\}
  \\ &\stackrel{a.s.}{=} (-1)^{d-m}\lim_{\varepsilon \to 0} \int_{W\cap F} \delta_{\varepsilon}(\nabla(X|_F)(t))\ind\{X(t)\geq u\} \det(D^2(X|_F)(t))\, dt.
  \end{align*}
\end{lemma}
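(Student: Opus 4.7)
The plan is to verify the identity pointwise on the full-measure event where assumption (i) and the generic conditions (ii)(a), (ii)(b), (iii) all hold. On this event I first claim that the critical set $C := \{t \in \overline{W} \cap F \colon \nabla(X|_F)(t)=0\}$ is finite and entirely contained in $W\cap F$. Indeed, (iii) excludes critical points from $\partial W\cap F$; (ii)(b) ensures $\det D^2(X|_F)(t)\neq 0$ at every $t\in C$, so the inverse function theorem applied to $\nabla(X|_F)$ makes each critical point isolated; compactness of $\overline{W}\cap F$ then forces $C$ to be finite. Enumerate $C=\{t_1,\ldots,t_k\}$, and note via (ii)(a) that $X(t_j)\neq u$ for every $j$.

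Next I would select pairwise disjoint open neighborhoods $U_j\subset W\cap F$ of $t_j$ sufficiently small that (a) $\nabla(X|_F)|_{U_j}$ is a $\mathcal{C}^1$-diffeomorphism onto an open neighborhood of $0\in F^\circ$, (b) $\on{sgn}(\det D^2(X|_F))$ is constant on $U_j$ and equal to its value at $t_j$, and (c) the indicator $\ind\{X\geq u\}$ is constant on $U_j$ with value $\ind\{X(t_j)\geq u\}$. By continuity of $\nabla(X|_F)$ and compactness of the complement $K := (\overline{W}\cap F)\setminus \bigcup_j U_j$, there exists $c>0$ with $\|\nabla(X|_F)\|\geq c$ on $K$. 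For $\varepsilon<c$ the integrand on the right-hand side of the lemma vanishes on $K$, so the integral reduces to a sum over the $U_j$. On each $U_j$ the substitution $y=\nabla(X|_F)(t)$ gives
\begin{align*}
 \int_{U_j}\delta_\varepsilon(\nabla(X|_F)(t))\det D^2(X|_F)(t)\,dt = \on{sgn}(\det D^2(X|_F)(t_j))\int_{\nabla(X|_F)(U_j)}\delta_\varepsilon(y)\,dy,
\end{align*}
which, for $\varepsilon$ small enough that $B^{d-m}_\varepsilon\subset \nabla(X|_F)(U_j)$, equals $\on{sgn}(\det D^2(X|_F)(t_j))$ since $\int_{F^\circ}\delta_\varepsilon\,d\mathcal{H}^{d-m}=1$. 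Inserting the (piecewise constant) indicator and summing over $j$ yields
\begin{align*}
 \lim_{\varepsilon\to 0}\int_{W\cap F}\delta_\varepsilon(\nabla(X|_F))\ind\{X\geq u\}\det D^2(X|_F)\,dt = \sum_{j=1}^k \ind\{X(t_j)\geq u\}\on{sgn}(\det D^2(X|_F)(t_j)).
\end{align*}

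It remains to identify the sign with the tangential Morse index. Since $t_j$ is an interior critical point, $\iota_{-X,W\cap F}(t_j)$ coincides with the ordinary Morse index of $-X|_F$ at $t_j$, namely the number $p_j$ of positive eigenvalues of $D^2(X|_F)(t_j)$. Writing $p_j+n_j=d-m$ with $n_j$ counting negative eigenvalues, one has $\on{sgn}(\det D^2(X|_F)(t_j))=(-1)^{n_j}=(-1)^{d-m}(-1)^{p_j}$, so the factor $(-1)^{d-m}$ in the lemma cancels the dimensional sign and produces exactly $\sum_j \ind\{X(t_j)\geq u\}(-1)^{\iota_{-X,W\cap F}(t_j)}$, which is the signed point count on the left-hand side.

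The main delicate points are the finiteness of $C$ and the uniform lower bound on $\|\nabla(X|_F)\|$ off the $U_j$; both hinge on the non-degeneracy hypothesis (ii)(b) together with the boundary-exclusion assumption (iii), without which the support of the approximating integrand could fail to localise around $C$. A minor subtlety is the identification of the tangential Morse index with the standard one, which is immediate at interior critical points since the tangent cone to $W\cap F$ at $t_j$ is all of $F^\circ$ (cf.~\cite[(9.1.2)]{book:AdlerTaylor}); the remaining computations are a routine change of variables.
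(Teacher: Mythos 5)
Your proposal is correct and follows essentially the same route as the paper's proof: localize around the finitely many nondegenerate interior critical points, shrink the neighborhoods so that the sign of $\det D^2(X|_F)$ and the indicator $\ind\{X\geq u\}$ are constant, apply the change of variables to $\nabla(X|_F)$, and convert the sign of the determinant into the parity of the tangential Morse index. The only cosmetic differences are that you obtain the localisation of the support of $\delta_\varepsilon(\nabla(X|_F))$ via a direct compactness lower bound on $\|\nabla(X|_F)\|$ where the paper argues by contradiction, and that you treat all critical points in one signed sum rather than splitting into even- and odd-index families.
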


\begin{proof}
  We follow the proof of \cite [Theorem 11.2.3]{book:AdlerTaylor}.

  We consider the points $t_1,\ldots, t_n \in \overline{W}\cap F$ with $\nabla(X|_F)(t_i)=0$ and $\iota_{-X,W\cap F}(t_i) \text{ even}$, for $i=1,\ldots, n$ and note that there are only finitely many because of (ii)(b), the fact that $\overline{W}\cap F$ is compact and the implicit function  theorem. Moreover, condition (iii) implies the existence of open sets, with respect to the subspace topology, $U_i \subset \overline{W}\cap F$ such that $t_i \in U_i$, the sets $U_1,\ldots, U_n$ are pairwise disjoint and $U_i\cap \partial W \cap F= \emptyset $, for $i=1,\ldots, n.$

  Condition (ii)(a) yields that we can choose the open sets $U_i$, $i=1,\ldots, n$, small enough such that either for all $t\in U_i$ we have $X(t)\in (u,\infty)$ or for all $t\in U_i$ we have $X(t)\in (-\infty,u)$.
  The same line of reasoning yields open sets $U'_1,\ldots, U'_{n'}\subset \overline{W} \cap F$ containing the points $t'_1,\ldots, t'_{n'}$ with $\nabla(X|_F)(t'_i)=0$ and $\iota_{-X,W\cap F}(t'_i) \text{ odd}$, for $i=1,\ldots, n'$, satisfying the same properties as $U_1,\ldots, U_n$.
  By continuity of the determinant and condition (ii)(b), we can choose the sets $U_1,\ldots,U_n, U'_1,\ldots, U'_{n'}$ small enough such that the sign of $\det(D^2(X|_F))$ stays constant on those sets. The last observation we need is that by contradiction, cf. \cite [Lemma 11.2.3]{book:AdlerTaylor}, we see the existence of a number $\varepsilon>0$ small enough such that
  \begin{align}\label{eq:trafo1}
    \nabla(X|_F)^{-1}(B^d_\varepsilon) \cap \overline{W}\cap F \subset \bigcup_{i=1}^n U_i\cup \bigcup_{i=1}^{n'} U'_{i}.
  \end{align}

  Now, by the inverse function theorem, we can choose the sets $U_1,\ldots,U_n, U'_1,\ldots, U'_{n'}$ and the number $\varepsilon$ small enough to obtain $\nabla(X|_F)$ bijective on the sets $U_i$ and onto $B^{d-m}_\varepsilon\subset F^\circ$. Notice the abuse of notation in writing $\nabla(X|_F)^{-1}$ for every inverse. Hence we have
  \begin{align*}
    &\#\{t\in W\cap F:X(t)\geq u, \nabla (X|_F)(t)=0,\iota_{-X,W\cap F}(t) \text{ even}\}\\
  &- \#\{t\in W\cap F:X(t)\geq u, \nabla (X|_F)(t)=0,\iota_{-X,W\cap F}(t) \text{ odd}\}\\
  &=\sum_{i=1}^n \int_{\nabla (X|_F)(U_i)} \delta_{\varepsilon}(y) \ind\{X(\nabla(X|_F)^{-1}(y))\geq u\} \,\mathcal{H}^{d-m}(dy)\\
  &\quad - \sum_{i=1}^{n'} \int_{\nabla (X|_F)(U'_i)} \delta_{\varepsilon}(y) \ind\{X(\nabla(X|_F)^{-1}(y))\geq u\} \,\mathcal{H}^{d-m}(dy).
  \end{align*}
  We obtain with the substitution rule the equality to
  \begin{align}\label{eq:trafo}
    \sum_{i=1}^n&\int_{U_i} \delta_\varepsilon(\nabla(X|_F)(t))\ind\{X(t)\geq u\} |\det(D^2(X|_F)(t))| \,\mathcal{H}^{d-m}(dt)\notag\\
    & -\sum_{i=1}^{n'}\int_{U'_i} \delta_\varepsilon(\nabla(X|_F)(t))\ind\{X(t)\geq u\} |\det(D^2(X|_F)(t))| \,\mathcal{H}^{d-m}(dt).
  \end{align}
  Since the sign of $\det(D^2(X|_F))$ is constant on the sets $U_1,\ldots,U_n, U'_1,\ldots, U'_{n'}$ and furthermore, by definition of $\iota$, the equality $\operatorname{sign}(\det(D^2(X|_F(t_i))))=(-1)^{d-m-\iota_{-X,W\cap F}(t_i)}$ holds as well as the same relation for the points $t'_i$, we have
  \begin{align*}
    &\operatorname{sign}(\det(D^2(X|_F(t)))) = (-1)^{d-m}, \quad \text{for all } t \in U_i,\\
      &\operatorname{sign}(\det(D^2(X|_F(t)))) = -(-1)^{d-m}, \quad \text{for all } t \in U'_j,
  \end{align*}
  for $i=1,\ldots, n$ and $j=1,\ldots, n'$. Therefore (\ref{eq:trafo}) equals
  \begin{align*}
    (-1)^{d-m}\bigg(\sum_{i=1}^n&\int_{U_i} \delta_\varepsilon(\nabla(X|_F)(t))\ind\{X(t)\geq u\} \det(D^2(X|_F)(t))\, \mathcal{H}^{d-m}(dt)\notag\\
    & +\sum_{i=1}^{n'}\int_{U'_i} \delta_\varepsilon(\nabla(X|_F)(t))\ind\{X(t)\geq u\} \det(D^2(X|_F)(t)) \,\mathcal{H}^{d-m}(dt)\bigg),
  \end{align*}
  which yields together with (\ref{eq:trafo1}) the assertion. \qedhere
\end{proof}

\section{Proof of Lemma \ref{lem:hilfL^2}}
In the remaining part of the appendix we give a proof of Lemma \ref{lem:hilfL^2}. Lemmata \ref{lem:detformula} -- \ref{lem:boundedExp2} are invoked in the proof of Lemma \ref{lem:hilfL^2} (i).
\begin{proof}[Proof (Lemma \ref{lem:hilfL^2})]
  To prove (i) we use refined methods of \cite [Proposition 1.1 (1)]{paper:EstradeLeon}. In order to apply the Rice formulas, cf. \cite [Chapter 6]{book:AzaisWschebor}, we define the Gaussian field $X^F_{v(F)}$ on $\R^{d-m}$, for $F\in A(d,d-m)$ and with $v(F):=(v_1,\ldots,v_{d-m})$ denoting an orthonormal basis of $F^\circ$, by setting
  \begin{align} \label{def:XRotiert}
    X^F_{v(F)}(s):= (X\circ \rho_{v(F)}^F)(s),
  \end{align}
  where $\rho_{v(F)}^F\colon \R^{d-m}\to F\subset\R^d$ is defined as $x\mapsto \sigma^{v(F)}(x)+ p$, $\sigma^{v(F)}\colon \R^{d-m}\to \R^d$ is given by $x\mapsto (v_1|\ldots|v_{d-m})x$ and $p \in F$ is such that $d(0,F)=\inf \{|y|\mid y\in F\}=d(0,p)$. We then have
  \begin{align*}
    \nabla X^F_{v(F)}(t) = \nabla_{v(F)} (X)(\rho_{v(F)}^F (t)) \quad\text{ and }\quad D^2 X^F_{v(F)}(t)= D_{v(F)}^2 (X)(\rho_{v(F)}^F(t))
  \end{align*}
  for $t\in \R^{d-m}$. Since $\nabla (X|_F)(t)= \sum_{i=1}^{d-m}\partial/\partial v_i X(t) v_i$, we obtain for $y\in F^\circ$ that
  \begin{align*}
    \#\{t\in W\cap F:\nabla (X|_F)(t)=y\}= \#\{s\in W^F_{v(F)}:\nabla X^F_{v(F)}(s)=(\langle y,v_1\rangle, \ldots,\langle y, v_{d-m}\rangle)\},
  \end{align*}
  where $W^F_{v(F)}$ denotes $(\rho^F_{v(F)})^{-1}(W\cap F)$. Note that $\on{diam}(W^F_{v(F)})\leq \on{diam}(W)<\infty$ and that we abbreviate $y^{v(F)}:=(\sigma^{v(F)})^{-1}(y)=(\langle y,v_1\rangle,\ldots,\langle y, v_{d-m}\rangle)$.

  By \ref{as:dif} and \cite [Proposition 6.5]{book:AzaisWschebor} the assumptions of the Rice formula in \cite [Theorem 6.2]{book:AzaisWschebor} are satisfied for fixed $F$ and we therefore obtain
  \begin{align*}
    \E&\left[ \#\{t\in W^F_{v(F)}:\nabla X^F_{v(F)}(t)=y^{v(F)}\}\right] \\
    &= \int_{W^F_{v(F)}} \E\left[ |\det D^2 X^F_{v(F)}(t)| \mid \nabla X^F_{v(F)}(t)=y^{v(F)}\right] p_{\nabla X^F_{v(F)}(t)}(y^{v(F)}) \,dt,
  \end{align*}
  where $p_{\nabla X^F_{v(F)}(t)}(y^{v(F)})$ denotes the probability density of $\nabla X^F_{v(F)}(t)$ at point $y^{v(F)}$. Stationarity and isotropy imply that $\left( \frac{\partial}{\partial v_i} X(t)\right)_{i=1}^{d-m} \stackrel{\mathcal{D}}{=}\left (\frac{\partial}{\partial t_i} X(0)\right)_{i=1}^{d-m}$ and that the first and second derivatives are independent at equal times. Thus the above equals
  \begin{align}\label{eq:rice}
    \int_{W^F_{v(F)}}& \E\left[ |\det D_{v(F)}^2 (X)(\rho_{v(F)}^F(t))| \mid \nabla_{v(F)} (X)(\rho_{v(F)}^F (t))=y^{v(F)}\right] p_{\nabla_{v(F)} (X)(\rho_{v(F)}^F(t))}(y^{v(F)}) \,dt\notag\\
    &=\int_{W^F_{v(F)}} \E\left[ |\det D_{v(F)}^2 X(0)|\right] p_{\nabla_{v(F)} X(0)}(y^{v(F)}) \,dt\notag\\
    &= \E\left[ |\det D_{v(F)}^2 X(0)|\right] p_{\frac{\partial}{\partial t_1}X(0)\ldots\frac{\partial}{\partial t_{d-m}}X(0)}(y^{v(F)}) \mathcal{H}^{d-m}(W \cap F)\\
    &\leq\E\left[ |\det D_{v(F)}^2 X(0)|\right] p_{\frac{\partial}{\partial t_1}X(0)\ldots\frac{\partial}{\partial t_{d-m}}X(0)}(0) \mathcal{H}^{d-m}(W \cap F) .\notag
  \end{align}
  Observe that
  \begin{align*}
    \E\left[ |\det D_{v(F)}^2 X(0)|\right]\leq \E\left[1+ \det (D_{v(F)}^2 X(0))^2\right]
  \end{align*}
  and that by Hadamard's inequality, cf. \cite [Fact 8.17.11]{book:Bernstein}
  \begin{align*}
    \det (D_{v(F)}^2 X(0))^2\leq \prod_{i=1}^{d-m}\sum_{k=1}^{d-m}\left(\frac{\partial^2}{\partial v_i \partial v_k}X(0)\right)^2 = \sum_{k_1=1}^{d-m}\ldots \sum_{k_{d-m}=1}^{d-m}\prod_{i=1}^{d-m}\left (\frac{\partial^2}{\partial v_i \partial v_{k_i}}X(0)\right)^2.
  \end{align*}
  Hence, we obtain with the definition $Y_j^{k}:=\frac{\partial^2}{\partial v_r \partial v_{k_{\lfloor (j+1)/2\rfloor}}}X(0)$ for $j=1,\ldots, 2(d-m)$, that
  \begin{align*}
    \E\left[\det (D_{v(F)}^2 X(0))^2\right] &\leq \sum_{k_1=1}^{d-m}\ldots \sum_{k_{d-m}=1}^{d-m}\E\left[ \prod_{j=1}^{2(d-m)}Y_j^k\right]\\
    &=\sum_{k_1=1}^{d-m}\ldots \sum_{k_{d-m}=1}^{d-m} \sum \E\left[ Y_{j_1}^kY_{j_2}^k\right]\ldots \E\left[ Y_{j_{2(d-m)-1}}^kY_{j_{2(d-m)}}^k\right],
  \end{align*}
  where the last line follows from Wick's formula, cf. \cite [Lemma 11.6.1]{book:AdlerTaylor}, and the sum is taken over the $(2(d-m))!/(2^{d-m}(d-m)!)$ possibilities of choosing $d-m$ pairs of $Y_1^k,\ldots ,Y_{2(d-m)}^k$, where the order of the pairs does not matter. We conclude from $\E\left[ Y_{j}^kY_{j'}^k\right]\leq \widetilde \psi(0)\leq d^2\psi(0)$, cf. \ref{as:covariance}, that
  \begin{align*}
    \E\left[\det (D_{v(F)}^2 X(0))^2\right] \leq c \psi^{d-m}(0)
  \end{align*}
  and that the expectation is finite independently of $F$.

  By \ref{as:dif}, \ref{as:nondegenerate} and \cite [Proposition 6.5]{book:AzaisWschebor} the required conditions of the Rice formula in \cite [Theorem 6.3]{book:AzaisWschebor} are satisfied and an application of the latter yields
  \begin{align*}
    &\E\left[ \#\{t\in W^F_{v(F)}:\nabla X^F_{v(F)}(t)=y^{v(F)}\}(\#\{t\in W^F_{v(F)}:\nabla X^F_{v(F)}(t)=y^{v(F)}\}-1)\right]\\
    &=\int_{W^F_{v(F)}}\int_{W^F_{v(F)}} \E\left[ |\det D^2X^F_{v(F)}(t)\det D^2X^F_{v(F)}(t')| \mid \nabla X^F_{v(F)}(t)=\nabla X^F_{v(F)}(t')=y^{v(F)} \right]\notag\\
    &\hspace{2.3cm}\times p_{\nabla X^F_{v(F)}(t)\nabla X^F_{v(F)}(t')}(y^{v(F)},y^{v(F)}) \,dt \,dt',
  \end{align*}
  where $p_{\nabla X^F_{v(F)}(t_1)\nabla X^F_{v(F)}(t_2)}(y^{v(F)},y^{v(F)})$ denotes the density of the $2(d-m)$-dimensional random vector $(\nabla X^F_{v(F)}(t),\nabla X^F_{v(F)}(t'))$ at point $(y^{v(F)},y^{v(F)})$. By stationarity  and Fubini's theorem the above equals
  \begin{align}\label{eq:ricefac}
    &\int_{W^F_{v(F)}-W^F_{v(F)}}\E\left[ |\det D^2_{v(F)}(X)(\rho_{v(F)}^F(t))\det D^2_{v(F)}(X)(\rho_{v(F)}^F(0))| \mid \mathcal{E}(F,t,y) \right]\notag\\
    &\hspace{1.1cm} \times p_{\nabla_{v(F)} (X)(\rho_{v(F)}^F(t))\nabla_{v(F)} (X)(\rho_{v(F)}^F(0))}(y^{v(F)},y^{v(F)})\mathcal{H}^{d-m}(W^F_{v(F)} \cap (W^F_{v(F)}-t))\, dt\notag\\
    &\quad\leq\int_{\overline{W^F_{v(F)}-W^F_{v(F)}}}\E\left[ |\det D^2_{v(F)}(X)(\rho_{v(F)}^F(t))\det D^2_{v(F)}(X)(\rho_{v(F)}^F(0))| \mid \mathcal{E}(F,t,y) \right]\notag\\
    &\hspace{1.5cm} \times p_{\nabla_{v(F)} (X)(\rho_{v(F)}^F(t))\nabla_{v(F)} (X)(\rho_{v(F)}^F(0))}(y^{v(F)},y^{v(F)})\mathcal{H}^{d-m}(W^F_{v(F)}\cap (W^F_{v(F)}-t)) \,dt,
  \end{align}
  where $\mathcal{E}(F,t,y)$ denotes the event $\{\nabla _{v(F)}(X)(\rho_{v(F)}^F(t))=\nabla _{v(F)}(X)(\rho_{v(F)}^F(0))=y^{v(F)}\}$.
  To obtain the finiteness of the latter integral, we apply Lemmata \ref{lem:boundDensity} and \ref{lem:boundExp} from the appendix with $N:=2\sup\{\|x\|\mid x\in W\}$, which provide an integrable upper bound for the integrand. Note that all constants, appearing in these lemmata, are independent of $F\in A(d,d-m)$ and we therefore obtain the assertion.

  We continue with the proof of (ii), which uses the ideas in \cite [Proposition 1.1 (2)]{paper:EstradeLeon}. We abbreviate
  \begin{align*}
      G(F,t,y)&:=\E\left[ |\det D^2_{v(F)}(X)(\rho_{v(F)}^F(t))\det D^2_{v(F)}(X)(\rho_{v(F)}^F(0))| \mid \mathcal{E}(F,t,y) \right]\\
      &\quad \times p_{\nabla_{v(F)} (X)(\rho_{v(F)}^F(t))\nabla_{v(F)} (X)(\rho_{v(F)}^F(0))}(y^{v(F)},y^{v(F)}).
  \end{align*}
  The application of Rice's formula in (\ref{eq:rice}) shows that the first moment of the counting variable $\# \{t\in W \cap F \colon \nabla (X|_F)(t)=y\}$ is continuous in $y$. Thus it remains to show the continuity of the second factorial moment, which can be written, using Rice's formula, cf. equation (\ref{eq:ricefac}), as
  \begin{align*}
    \varphi(F,y):=&\int_{W^F_{v(F)}-W^F_{v(F)}}G(F,t,y)\mathcal{H}^{d-m}(W^F_{v(F)}\cap (W^F_{v(F)}-t))\, dt.
  \end{align*}
  Lemmata \ref{lem:boundDensity} and \ref{lem:boundExp} yield that for any number $\eta>0$ there exists a number $\varepsilon >0$ such that
  \begin{align*}
    \int_{B^{d-m}_{\varepsilon}}\E\left[ |\det D^2_{v(F)}(X)(\rho_{v(F)}^F(t))\det D^2_{v(F)}(X)(\rho_{v(F)}^F(0))| \mid \mathcal{E}(F,t,y) \right]&\\
   \times p_{\nabla_{v(F)} (X)(\rho_{v(F)}^F(t))\nabla_{v(F)}(X)(\rho_{v(F)}^F(0))}(y^{v(F)},y^{v(F)}) \,dt &< \eta,
  \end{align*}
  uniformly in $y\in F^\circ\cap D$. Thus for $y,y'\in F^\circ \cap D$
  \begin{align}\label{eq:conti}
    |\varphi(F,y)-\varphi(F,y')| \leq 2c\eta + c&\int_{W^F_{v(F)}-W^F_{v(F)}\setminus B^{d-m}_\varepsilon}| G(F,t,y)-G(F,t,y')| dt.
  \end{align}
  Observe now that by Gaussian regression, cf. \cite [Proposition 1.2]{book:AzaisWschebor}, we obtain
  \begin{align*}
    \E&\left[ |\det D^2_{v(F)}(X)(\rho_{v(F)}^F(t))\det D^2_{v(F)}(X)(\rho_{v(F)}^F(0))| \mid \mathcal{E}(F,t,y)\right]
    \\&=\E\left[ |\det (A(F,t)y^{v(F)}+Z(F,t))\det ( A'(F,t)y^{v(F)}+Z'(F,t))| \right]
    \\&= \E\left[ \Big | \det \Big (Z(F,t)_{(k-1)(d-m)+l}+\sum_{i=1}^{d-m}A_{(k-1)(d-m)+l ,i}(F,t)y^{v(F)}_{i}\Big)_{k,l=1}^{d-m}\right.\\
    &\hspace{1cm} \times \left .\det\Big (Z'(F,t)_{(k-1)(d-m)+l}+\sum_{i=1}^{d-m}A'_{(k-1)(d-m)+l ,i}(F,t)y^{v(F)}_{i}\Big)_{k,l=1}^{d-m} \Big |\right],
  \end{align*}
  for suitable matrices $A(F,t)$ and $A'(F,t)$, random vectors $Z(F,t)$ and $ Z'(F,t)$. Thus for a difference of the above expression in $y$ and $y'$, we can use the reverse triangle inequality and expand the determinant, to obtain the continuity of the conditional expectation in $y$. Hence, $G(F,t,\cdot)$ is continuous as a product of continuous functions.

  To apply dominated convergence for $y'\to y$ in (\ref{eq:conti}), observe that Lemmata \ref{lem:boundDensity} and \ref{lem:boundExp} yield for $t\in \overline{W^F_{v(F)}}$ the estimates
  \begin{align*}
    \E&\left[ |\det D^2_{v(F)}(X)(\rho_{v(F)}^F(t))\det D^2_{v(F)}(X)(\rho_{v(F)}^F(0))| \mid \mathcal{E}(F,t,y)\right]\\
    &\leq c \|t\|^2(c+c\|y\|^{4(d-m-1)})^{1/2}(c+c\|y\|^{4})^{1/2}\\
    &\leq c \|t\|^2\sup_{y\in D}(c+c\|y\|^{4(d-m-1)})^{1/2}(c+c\|y\|^{4})^{1/2}
  \end{align*}
  and
  \begin{align*}
    p_{\nabla_{v(F)} (X)(\rho_{v(F)}^F(t))\nabla_{v(F)} (X)(\rho_{v(F)}^F(0))}(y^{v(F)},y^{v(F)}) \leq c\|t\|^{-(d-m)}\ind_U(t)+c\ind_{U}(t),
  \end{align*}
  where $c>0$ is a constant independent of $t$ and $y$, and $U$ is a neighborhood of $0$. Thus we obtain an integrable upper bound for $|G(F,t,y)-G(F,t,y')|$ independent of $y'$. We conclude
  \begin{align*}
    \lim_{y'\to y} |\varphi(F,y)-\varphi(F,y')| \leq 2c\eta + c\int_{W^F_{v(F)}-W^F_{v(F)}\setminus B^{d-m}_\varepsilon}\lim_{y'\to y}| G(F,t,y)-G(F,t,y')|\, dt = 2c\eta.
  \end{align*}
  Taking the limit $\eta\to 0$ yields the assertion.

  We conclude part (iii) by following the lines of \cite [Proposition 1.2]{paper:EstradeLeon}. We first show that for $F\in A(d,d-m)$
  \begin{align*}
    \int_{W\cap F}& \delta_\varepsilon(\nabla (X|_F)(t))|\det(D^2(X|_F)(t))| \,dt\overset{L^2(\mathbb{P})}{\underset{\varepsilon \to 0}{\longrightarrow}} \# \{t\in W \cap F \colon \nabla (X|_F)(t)=0\}.
  \end{align*}
  Note that by the same proof as used for Lemma \ref{lem:trafo}, whose preliminaries, for fixed $F\in A(d,d-m)$, are checked in \cite [Lemma 11.2.10 - 11.2.12]{book:AdlerTaylor}, we obtain almost sure convergence. Thus we obtain by an application of Fatou's Lemma
  \begin{align*}
    \E&\left[ \# \{t\in W \cap F \colon \nabla (X|_F)(t)=0\}^2\right]
    \\&\leq \liminf_{\varepsilon \to 0} \E\left[ \left (\int_{W\cap F}\delta_\varepsilon(\nabla (X|_F)(t))|\det(D^2(X|_F)(t))| \,dt\right)^2\right]\\
    &\leq \limsup_{\varepsilon \to 0} \E\left[ \left(\int_{W\cap F}\delta_\varepsilon(\nabla (X|_F)(t))|\det(D^2(X|_F)(t))| \,dt\right)^2\right].
  \end{align*}
  An application of the coarea formula, cf. \cite [Theorem 3.2.12]{book:Federer}, yields
  \begin{align*}
    \int_{W\cap F}&\delta_\varepsilon(\nabla (X|_F)(t))|\det(D^2(X|_F)(t))| dt= \int_{F^\circ} \#\{t\in W\cap F : \nabla(X|_F)(t)=y\}\delta_\varepsilon(y)\,dy,
  \end{align*}
  which leads to the upper bound
  \begin{align*}
    \limsup_{\varepsilon \to 0} \E&\left[ \left (\int_{F^\circ} \#\{t\in W\cap F : \nabla(X|_F)(t)=y\}\delta_\varepsilon(y)\,dy\right)^2\right] \\
    &\leq \limsup_{\varepsilon \to 0} \E\left[ \int_{F^\circ} \#\{t\in W\cap F : \nabla(X|_F)(t)=y\}^2\delta_\varepsilon(y)\,dy\right]\\
    &=\limsup_{\varepsilon \to 0} \int_{F^\circ} \E\left[\#\{t\in W\cap F : \nabla(X|_F)(t)=y\}^2\right]\delta_\varepsilon(y)\,dy\\
    &=\E\left[\#\{t\in W\cap F : \nabla(X|_F)(t)=0\}^2\right],
  \end{align*}
  where we used Jensen's inequality in the second line and point (ii) in the last. This shows the assertion. Together with the fact that
  \begin{align*}
    |\xi_W(F,\varepsilon)|\leq \int_{W\cap F} \delta_\varepsilon(\nabla (X|_F)(t))|\det(D^2(X|_F)(t))|\, dt
  \end{align*}
  and Lemma \ref{lem:trafo}, whose assumptions, for fixed $F$, are again checked in \cite [Section 11.2]{book:AdlerTaylor}, we conclude the lemma by a variant of the dominated convergence theorem, cf. \cite [Theorem 1.20]{book:EvansGariepy}, and note that especially $\xi_W(F)\in L^2(\mathbb{P})$. \qedhere
\end{proof}

\begin{lemma}\label{lem:detformula}
  For real numbers $c_1,c_2\in \R$ and $v\in \R^d$ define the matrix $A:=c_1I_{d}+c_2 vv^\top$. Then
  \begin{align*}
    \det A = c_1^{d}+c_1^{d-1}c_2\|v\|^2.
  \end{align*}
\end{lemma}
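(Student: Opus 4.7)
The plan is to exploit the rank-one structure of the matrix $c_2 vv^\top$ via its eigendecomposition. If $v=0$, both sides equal $c_1^d$ trivially, so I may assume $v\neq 0$. Then $vv^\top$ is a symmetric, rank-one matrix whose spectral decomposition is transparent: its image is the line $\R v$, on which it acts by multiplication by $\|v\|^2$ (since $(vv^\top)v = v(v^\top v) = \|v\|^2 v$), and its kernel is the hyperplane $v^\perp$ of dimension $d-1$. Hence $vv^\top$ has eigenvalue $\|v\|^2$ with multiplicity one and eigenvalue $0$ with multiplicity $d-1$.

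Adding $c_1 I_d$ commutes with every orthogonal change of basis and simply shifts every eigenvalue by $c_1$. Therefore $A=c_1 I_d + c_2 vv^\top$ has eigenvalues $c_1 + c_2 \|v\|^2$ (once) and $c_1$ (with multiplicity $d-1$). Taking the product yields
\[
  \det A = (c_1 + c_2\|v\|^2)\, c_1^{d-1} = c_1^d + c_1^{d-1} c_2 \|v\|^2,
\]
which is the claim. There is essentially no obstacle here; the only mild subtlety worth noting is the degenerate case $c_1 = 0$ in dimension $d=1$, which is consistent with the convention $0^0 = 1$ (or, alternatively, may be handled directly: $A = c_2 v^2$ in that case). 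Equivalently, one could invoke the Sylvester/matrix determinant lemma $\det(M + uw^\top) = \det(M)(1 + w^\top M^{-1} u)$ with $M = c_1 I_d$, $u = c_2 v$, $w = v$ whenever $c_1 \neq 0$, and extend by continuity in $c_1$, but the eigenvalue argument avoids any case distinction on invertibility.
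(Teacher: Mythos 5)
Your proof is correct and follows essentially the same route as the paper: both arguments identify the eigenvalues of the rank-one symmetric matrix $vv^\top$ (namely $\|v\|^2$ on $\R v$ and $0$ on $v^\perp$) and multiply them out. The only cosmetic difference is that you shift the eigenvalues of $A$ directly, whereas the paper first factors out $c_1$ and computes $\det(I_d + (c_2/c_1)vv^\top)$, which forces it to dispose of the case $c_1=0$ separately; your version avoids that case distinction.
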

\begin{proof}
  Note that for $c\in\R$ and $u\in v^\perp$, we obtain
  \begin{align*}
    (I_{d}+cvv^\top)v=(1+c\|v\|^2)v \quad\text{ and }\quad (I_{d}+cvv^\top)u =u.
  \end{align*}
  Thus the linear mapping associated with the matrix $I_{d}+cvv^\top$ has the eigenvalues $1+c\|v\|^2,1,\ldots,1$, yielding
  \begin{align*}
    \det(I_{d}+cvv^\top)= 1+c\|v\|^2.
  \end{align*}
  Hence by choosing $c:=c_2/c_1$ --- for $c_1=0$ the lemma holds trivially --- we obtain
  \begin{equation*}
    \det(A)= c_1^{d}\det(I_{d}+cvv^\top)= c_1^{d}+c_1^{d-1}c_2\|v\|^2. \qedhere
  \end{equation*}
\end{proof}

\begin{lemma}\label{lem:boundDensity}
  There is a constant $c$, depending on the covariance structure of $X$, $d$, $m$ and $N$, and an open neighborhood $U\subset \R^{d-m}$ of $0$, such that for $F\in A(d,d-m)$, $t\in U$ and $y\in F^\circ$
  \begin{align*}
    p_{\nabla_{v(F)} (X)(\rho_{v(F)}^F(t))\nabla_{v(F)}(X)(\rho_{v(F)}^F(0))}(y^{v(F)},y^{v(F)}) \leq c \|t\|^{-(d-m)}.
  \end{align*}
  Furthermore, there exists a constant $c$, depending on the covariance structure of $X$, $d$, $m$ and $N$, such that for $F\in A(d,d-m)$, $t\in \overline{B^{d-m}_{2N}}\setminus U$ and $y\in F^\circ$
  \begin{align*}
    p_{\nabla_{v(F)} (X)(\rho_{v(F)}^F(t))\nabla_{v(F)} (X)(\rho_{v(F)}^F(0))}(y^{v(F)},y^{v(F)}) \leq c.
  \end{align*}
\end{lemma}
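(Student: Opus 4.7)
The strategy is to invoke the standard upper bound for a Gaussian density at any point: since $Z(t) := (\nabla_{v(F)}X(\rho^F_{v(F)}(t)), \nabla_{v(F)}X(\rho^F_{v(F)}(0)))$ is a centered Gaussian vector in $\R^{2(d-m)}$ with covariance matrix $\Sigma(t)$, we have
\begin{align*}
p_{Z(t)}(y^{v(F)},y^{v(F)}) \leq (2\pi)^{-(d-m)} (\det\Sigma(t))^{-1/2}
\end{align*}
uniformly in $y$. Moreover, by stationarity and isotropy (together with the freedom in choosing $v(F)$, and the fact that only directional derivatives enter), the law of $Z(t)$ depends on $t$ only through $\|t\|$, so $\Sigma(t)$ is independent of $F$. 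Thus everything reduces to a lower bound for $\det \Sigma(t)$ that is valid uniformly in $F$.

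Write $R(t) := \E[\nabla_{v(F)}X(\rho^F_{v(F)}(t)) \nabla_{v(F)}X(\rho^F_{v(F)}(0))^\top]$, a symmetric $(d-m)\times (d-m)$ matrix whose entries are (up to sign) second derivatives of $\on{Cov}^X$ evaluated at $\sigma^{v(F)}(t)$. Since the diagonal blocks of $\Sigma(t)$ are the identity by \ref{as:dif}, the Schur complement formula yields $\det\Sigma(t) = \det(I - R(t)^2)$. For the compact region $\overline{B^{d-m}_{2N}}\setminus U$: by \ref{as:nondegenerate}, $\det\Sigma(t) > 0$ for all $t\neq 0$, the function $t\mapsto \det\Sigma(t)$ is continuous, and so it is bounded below by a positive constant on any compact set that excludes $0$. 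This gives the second claimed bound, and the uniformity in $F$ follows from the $F$-independence of $\Sigma(t)$.

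The main obstacle is the sharp rate $\|t\|^{-(d-m)}$ near $0$. Here I would Taylor expand $R(t)$ using the isotropy of $X$. Since all odd derivatives of $\on{Cov}^X$ at $0$ vanish by isotropy and reflection symmetry, $R(t) = I + \tfrac12 M(t) + o(\|t\|^2)$ where $M(t)$ is quadratic in $t$. Using the isotropic tensorial form of $\partial_{v_i}\partial_{v_j}\partial_{v_k}\partial_{v_l}\on{Cov}^X(0) = c(\delta_{ij}\delta_{kl}+\delta_{ik}\delta_{jl}+\delta_{il}\delta_{jk})$ (with $c>0$ as $c = \tfrac13 \on{Var}(\partial_{v_1}^2 X(0))$, which is strictly positive by \ref{as:nondegenerate}), a direct computation gives
\begin{align*}
R(t) = \bigl(1 - \tfrac{c}{2}\|t\|^2\bigr)I - c\, tt^\top + o(\|t\|^2),
\end{align*}
and hence $I - R(t)^2 = c\|t\|^2 I + 2c\, tt^\top + o(\|t\|^2)$. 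Lemma \ref{lem:detformula} applied with $c_1 = c\|t\|^2$, $c_2 = 2c$, $v = t$ yields $\det(c\|t\|^2 I + 2c\, tt^\top) = 3 c^{d-m} \|t\|^{2(d-m)}$. Choosing $U$ small enough that the $o(\|t\|^2)$ perturbation does not destroy this leading order, we conclude $\det\Sigma(t) \geq c'\|t\|^{2(d-m)}$ on $U$, which combined with the Gaussian density bound completes the proof.
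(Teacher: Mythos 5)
Your proposal is correct and follows essentially the same route as the paper: bound the Gaussian density by $(2\pi)^{-(d-m)}(\det\Sigma(t))^{-1/2}$, reduce to $\det(I-R(t)^2)$ via the block structure with identity diagonal blocks, use nondegeneracy plus compactness away from the origin, and near the origin combine isotropy, a Taylor expansion with leading coefficient proportional to $\mu=\operatorname{Var}(\partial^2_{t_1t_1}X(0))>0$, and Lemma \ref{lem:detformula} to get $\det\Sigma(t)\geq c'\|t\|^{2(d-m)}$. The only (cosmetic) difference is that the paper organizes the expansion through the radial covariance $R(r)=\on{Cov}^X(re_1)$ and obtains the exact identity $\det(I-R(t)^2)=(1-(R'(\|t\|)/\|t\|)^2)^{d-m-1}(1-R''(\|t\|)^2)$ before expanding, whereas you Taylor-expand the matrix directly using the isotropic tensor form of the fourth derivative; the resulting leading term $3(\mu/3)^{d-m}\|t\|^{2(d-m)}$ is identical.
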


\begin{proof}
  Note first that
  \begin{align*}
    p_{\nabla_{v(F)} (X)(\rho_{v(F)}^F(t))\nabla_{v(F)} (X)(\rho_{v(F)}^F(0))}(y^{v(F)},y^{v(F)}) \leq p_{\nabla_{v(F)} (X)(\rho_{v(F)}^F(t))\nabla_{v(F)} (X)(\rho_{v(F)}^F(0))}(0,0),
  \end{align*}
  since $\left(\nabla_{v(F)} X(\rho_{v(F)}^F(t)),\nabla_{v(F)} X(\rho_{v(F)}^F(0))\right)$ is a $2(d-m)$-dimensional, centered Gaussian vector. In order to bound the right side, we have to bound the determinant of this vector's covariance matrix from below. The covariance matrix is given by
  \begin{align*}
    \begin{pmatrix}
      I_{d-m} & \left( -\frac{\partial^2}{\partial v_i \partial v_j}(\on{Cov}^X)(\sigma^{v(F)}(t))\right)_{i,j=1}^{d-m} \\
      \left( -\frac{\partial^2}{\partial v_i \partial v_j}(\on{Cov}^X)(\sigma^{v(F)}(t))\right)_{i,j=1}^{d-m} & I_{d-m}
    \end{pmatrix},
  \end{align*}
  by \eqref{eq:covFirstDer} and $\E\left[ \frac{\partial}{\partial v_i}X(t)\frac{\partial}{\partial v_j}X(t')\right]= -\frac{\partial^2}{\partial v_i \partial v_j}(\on{Cov}^X)(t-t')$ for $i,j=1,\ldots, d-m$ and $t,t' \in \R^d$. Using \cite [2.8.4]{book:Bernstein}, the determinant of this matrix equals
  \begin{align*}
    \det\left (I_{d-m} - \left( \frac{\partial^2}{\partial v_i \partial v_j}(\on{Cov}^X)(\sigma^{v(F)}(t))\right)_{i,j=1,\ldots,d-m}^2 \right ).
  \end{align*}
  By isotropy, stationarity and the differentiability of $X$, cf. \ref{as:dif}, there exists $R\colon [0,\infty)\to \R, r \mapsto \on{Cov}^X(re_1)$ of class $\mathcal{C}^6$, such that $\on{Cov}^X(t)=R(\|t\|)$ for $t\in \R^d$. Differentiating this identity yields for $t\in \R^{d}\setminus \{0\}$
  \begin{align*}
    D^2\Co^X(t)= R'(\|t\|)\|t\|^{-1}I_d + (R''(\|t\|)\|t\|^{-2}-R'(\|t\|)\|t\|^{-3})(t_it_j)_{i,j=1}^d
  \end{align*}
  and we obtain
  \begin{align*}
    D^2_{v(F)}\Co^X(t)&= \begin{pmatrix}
                          v_1| & \cdots & |v_{d-m}\notag \\
                       \end{pmatrix}^\top
                       D^2\Co^X(t)
                       \begin{pmatrix}
                          v_1| & \cdots & |v_{d-m} \\
                       \end{pmatrix}
    \\&=R'(\|t\|)\|t\|^{-1}I_{d-m} + (R''(\|t\|)\|t\|^{-2}-R'(\|t\|)\|t\|^{-3})(\langle v_i,t\rangle\langle v_j, t \rangle)_{i,j=1}^{d-m}.
  \end{align*}
  Thus for $0\neq t\in \R^{d-m}$, we conclude
  \begin{align}\label{eq:D^2_{v(F)}}
    D^2_{v(F)}(\Co^X)(\sigma^{v(F)}(t))=R'(\|t\|)\|t\|^{-1}I_{d-m} + (R''(\|t\|)\|t\|^{-2}-R'(\|t\|)\|t\|^{-3})(t_it_j)_{i,j=1}^{d-m}.
  \end{align}
  Note that the right side is independent of the specific affine subspace $F$ as a result of the rotational invariance of $\Co^X$.
  Moreover, note that $\left ((t_it_j)_{i,j=1}^{d-m}\right )^2 = \|t\|^2(t_it_j)_{i,j=1}^{d-m}$, for $t\in \R^{d-m}$, and thus
  \begin{align}\label{eq:Iso}
    D^2_{v(F)}&(\Co^X)(\sigma^{v(F)}(t))^2 = (R'(\|t\|)\|t\|^{-1})^2 I_{d-m}+ (R''(\|t\|)\|t\|^{-2}-R'(\|t\|)\|t\|^{-3})\notag \\
    & \quad \times(2R'(\|t\|)\|t\|^{-1} + \|t\|^2(R''(\|t\|)\|t\|^{-2}-R'(\|t\|)\|t\|^{-3}))(t_it_j)_{i,j=1}^{d-m}\notag\\
    &=(R'(\|t\|)\|t\|^{-1})^2 I_{d-m}+\|t\|^{-2} (R''(\|t\|)^2-(R'(\|t\|)\|t\|^{-1})^{2}) (t_it_j)_{i,j=1}^{d-m} .
  \end{align}
  We now apply Lemma \ref{lem:detformula} to establish for the determinant
  \begin{align}\label{eq:determinant}
    \det&\left (I_{d-m} - D^2_{v(F)}(\Co^X)(\sigma^{v(F)}(t))^2 \right )\notag\\
    &=(1-(R'(\|t\|)\|t\|^{-1})^2)^{d-m}-(1-(R'(\|t\|)\|t\|^{-1})^2)^{d-m-1}\notag\\
    &\quad \times\|t\|^{-2}(R''(\|t\|)^2-(R'(\|t\|)\|t\|^{-1})^2)\|t\|^2
    \notag\\
    &=(1-(R'(\|t\|)\|t\|^{-1})^2)^{d-m}-(1-(R'(\|t\|)\|t\|^{-1})^2)^{d-m-1}(R''(\|t\|)^2-(R'(\|t\|)\|t\|^{-1})^2)\notag\\
    &=(1-(R'(\|t\|)\|t\|^{-1})^2)^{d-m-1}(1-R''(\|t\|)^2).
  \end{align}
  Hence the determinant is independent of $F\in A(d,d-m)$ and continuous in $t\in \R^{d-m}\setminus \{0\}$. Therefore, we can bound the density independently of $F$ and $y$ for $t\in \overline{B^{d-m}_{2N}}\setminus U$, where $U\subset \R^{d-m}$ is a open set containing $0$.

  We now prove the asserted estimate for a neighborhood of $0$ and therefore use Taylor's theorem twice, to obtain the expansions up to the fifth derivative
  \begin{align*}
    R'(r)=\sum_{k=0}^4 \frac{R^{(k+1)}(0)}{k!}r^k + o(r^4) \quad\text{ and }\quad
    R''(r)=\sum_{k=0}^3 \frac{R^{(k+2)}(0)}{k!}r^k + o(r^3),
  \end{align*}
  for $r \to 0$. Note that due to \ref{as:dif}, we have $R''(0)=-1$ and moreover, odd derivatives of $R$ vanish at $0$ due to the stationarity of $X$, cf. \cite [(5.5.3),(5.5.5)]{book:AdlerTaylor}. We therefore obtain
  \begin{align}\label{eq:Taylorexp}
    R'(\|t\|)&=-\|t\| + \frac{\mu}{3!}\|t\|^3 + o(\|t\|^4)\quad \text{ and }\quad
    R''(\|t\|)= -1 + \frac{\mu}{2}\|t\|^2+o(\|t\|^3)
  \end{align}
  for $\|t\| \to 0$, where $\mu:=\E\left[ \frac{\partial^2}{\partial t_1\partial t_1}X(0)^2\right]>0$ by \ref{as:nondegenerate}.
  We conclude with equation (\ref{eq:determinant})
  \begin{align*}
    \det&\left (I_{d-m} - D^2_{v(F)}(\Co^X)(\sigma^{v(F)}(t))^2 \right )=3\left(\frac{\mu}{3}\right)^{d-m}\|t\|^{2(d-m)}+o(\|t\|^{2(d-m)+1}),
  \end{align*}
  for $\|t\|\to 0$ and uniformly in $F$. We therefore find a neighborhood $U\subset \R^{d-m}$ of $0$ and a constant $c>0$ independent of $F$ and $y$, such that
  \begin{align*}
    \det&\left (I_{d-m} - D^2_{v(F)}(\Co^X)(\sigma^{v(F)}(t))^2 \right ) \geq c \|t\|^{2(d-m)},
  \end{align*}
  for $t\in U$. From this estimate, the asserted bound
  \begin{align}\label{eq:density}
    p_{\nabla_{v(F)} (X)(\rho_{v(F)}^F(t))\nabla_{v(F)} (X)(\rho_{v(F)}^F(0))}(0,0) &= (2\pi)^{-(d-m)}\det \left (I_{d-m} - D^2_{v(F)}(\Co^X)(\sigma^{v(F)}(t))^2\right)^{-\frac{1}{2}}\notag \\
    &\leq c \|t\|^{-(d-m)}
  \end{align}
  follows. \qedhere
\end{proof}

\begin{lemma} \label{lem:boundExp}
  There is a constant $c$, depending on the covariance structure of $X$, $N$, $d$ and $m$, such that for $F\in A(d,d-m)$, where $F\cap B^d_N\neq \emptyset$, $t\in \overline{B^{d-m}_{2N}}$ and $y\in F^\circ$
 \begin{align*}
   \E&\left[ |\det D^2_{v(F)}(X)(\rho_{v(F)}^F(t))\det D^2_{v(F)}(X)(\rho_{v(F)}^F(0))| \mid \mathcal{E}(F,t,y) \right]  \\
   &\leq c\|t\|^{2}(1+\|y\|^{4(d-m-1)})^{\frac{1}{2}}(1+\|y\|^4)^{\frac{1}{2}}.
  \end{align*}
\end{lemma}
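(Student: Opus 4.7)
The plan is to extract a factor of $\|t\|$ from each of $|\det \mathcal{H}_0|$ and $|\det \mathcal{H}_t|$, where I abbreviate $\mathcal{H}_\tau:=D^2_{v(F)}X(\rho^F_{v(F)}(\tau))$, by exploiting the fact that conditional on $\mathcal{E}(F,t,y)$ the gradient of $\tilde X:=X\circ\rho^F_{v(F)}$ takes the same value at $0$ and $t$. Writing $w:=t/\|t\|\in S^{d-m-1}$, the key identity comes from Taylor's theorem applied to the $\mathcal{C}^3$ path $s\mapsto \nabla\tilde X(st)$:
\begin{align*}
  \nabla\tilde X(t)-\nabla\tilde X(0)=\mathcal{H}_0 t+\int_0^1(1-s)D^3\tilde X(st)[t,t]\,ds.
\end{align*}
On the event $\mathcal{E}(F,t,y)$ the left-hand side is zero, so $\mathcal{H}_0 t=-\int_0^1(1-s)D^3\tilde X(st)[t,t]\,ds$, whence
\begin{align*}
  \|\mathcal{H}_0 w\|=\tfrac{1}{\|t\|}\|\mathcal{H}_0 t\|\leq \|t\|\int_0^1 \|D^3\tilde X(st)\|_{\mathrm{op}}\,ds.
\end{align*}
An analogous expansion around $t$ yields the same bound for $\|\mathcal{H}_t w\|$.

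Combining this with the elementary inequality $|\det M|\leq \|Mw\|\cdot\|M\|_{\mathrm{op}}^{d-m-1}$ (valid for any symmetric $(d-m)\times(d-m)$ matrix $M$ and unit vector $w$, since $\min_i|\lambda_i(M)|\leq \|Mw\|$) gives the pointwise estimate
\begin{align*}
  |\det\mathcal{H}_0\det\mathcal{H}_t|\leq \|t\|^2\Bigl(\int_0^1\|D^3\tilde X(st)\|\,ds\Bigr)^{\!2}\|\mathcal{H}_0\|_{\mathrm{op}}^{d-m-1}\|\mathcal{H}_t\|_{\mathrm{op}}^{d-m-1}
\end{align*}
almost surely on $\mathcal{E}(F,t,y)$. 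Taking conditional expectations and applying Jensen's inequality to pull the two integrals out, then H\"older with four factors of exponent $4$, it remains to bound
\begin{align*}
  \E\!\left[\|D^3\tilde X(st)\|^4\,\big|\,\mathcal{E}\right]\quad\text{and}\quad \E\!\left[\|\mathcal{H}_\tau\|^{4(d-m-1)}\,\big|\,\mathcal{E}\right],\qquad \tau\in\{0,t\},\;s\in[0,1].
\end{align*}

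For these conditional moments I use Gaussian regression. Each of $D^3\tilde X(st)$, $\mathcal{H}_0$, $\mathcal{H}_t$ forms a jointly Gaussian vector with $V=(\nabla_{v(F)}X(\rho^F_{v(F)}(0)),\nabla_{v(F)}X(\rho^F_{v(F)}(t)))$, so the conditional law given $V=(y^{v(F)},y^{v(F)})$ is Gaussian with mean linear in $y^{v(F)}$ and covariance independent of $y$. The crucial point is that although $\Sigma_{VV}$ is degenerate at $t=0$, the vector $(y^{v(F)},y^{v(F)})$ lies in a subspace on which $\Sigma_{VV}^{-1}$ stays bounded: the block form $\Sigma_{VV}=\bigl(\begin{smallmatrix}I & D(t)\\ D(t) & I\end{smallmatrix}\bigr)$ gives $\Sigma_{VV}^{-1}(y^{v(F)},y^{v(F)})^\top=((I+D(t))^{-1}y^{v(F)},(I+D(t))^{-1}y^{v(F)})^\top$, which is uniformly $O(\|y\|)$ on $\overline{B^{d-m}_{2N}}$ since $D(t)\to I$. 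Assumption \ref{as:covariance} bounds the residual covariances and the norms of $\Sigma_{\cdot,V}$ uniformly in $F$ and $t$, so the conditional means are $O(\|y\|)$ and conditional variances are $O(1)$, yielding
\begin{align*}
  \E\!\left[\|D^3\tilde X(st)\|^4\,\big|\,\mathcal{E}\right]\leq c(1+\|y\|^4),\qquad \E\!\left[\|\mathcal{H}_\tau\|^{4(d-m-1)}\,\big|\,\mathcal{E}\right]\leq c(1+\|y\|^{4(d-m-1)}).
\end{align*}
Assembling the four Hölder factors produces the asserted bound.

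The main obstacle is exactly this uniform control of $\Sigma_{VV}^{-1}$ when acting on $(y^{v(F)},y^{v(F)})$: the determinant of $\Sigma_{VV}$ vanishes like $\|t\|^{2(d-m)}$ as $t\to 0$ (used in Lemma \ref{lem:boundDensity}), so a naive bound on $\|\Sigma_{VV}^{-1}\|_{\mathrm{op}}$ would blow up. The block-matrix computation above — which reduces the action on the diagonal vector to inverting the nondegenerate operator $I+D(t)$ — is what saves us and ensures the conditional means are genuinely $O(\|y\|)$ uniformly in $t\in\overline{B^{d-m}_{2N}}$ and $F\in A(d,d-m)$.
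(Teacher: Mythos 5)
Your proposal is correct, and its skeleton matches the paper's: extract a factor $\|t\|^2$ by Taylor-expanding the gradient between the two conditioning points (where it takes the same value), control the determinant by an eigenvalue/Hadamard-type inequality, and bound the remaining conditional moments by Gaussian regression. Two of your technical devices, however, genuinely diverge from the paper's and simplify it. First, you use the integral form of the Taylor remainder, $\mathcal{H}_0t=-\int_0^1(1-s)D^3\tilde X(st)[t,t]\,ds$, whereas the paper uses the Lagrange form with random intermediate points $\xi_1,\ldots,\xi_{d-m}$; the paper is therefore forced to bound $\E[\sup_{x\in[0,1]}\|\partial_{v_i}D^2_{v(F)}X(\rho^F_{v(F)}(xt))\|^4\mid\mathcal{E}]$ (Lemma \ref{lem:boundedExp2}), which drags in the supremum of a Gaussian process, the Landau--Shepp bound, and an operator-norm estimate on $C_{12}C_2^{-1}$, while your version reduces via Jensen to fixed-point conditional fourth moments. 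Second, and more importantly, your observation that $\Sigma_{VV}^{-1}$ applied to the \emph{diagonal} vector $(y^{v(F)},y^{v(F)})$ equals $((I+D(t))^{-1}y^{v(F)},(I+D(t))^{-1}y^{v(F)})$ — with $I+D(t)=I_{d-m}-D^2_{v(F)}(\Co^X)(\sigma^{v(F)}(t))\to 2I_{d-m}$ nondegenerate — sidesteps the delicate cancellation analysis of Lemma \ref{lem:matriceBounds}, where $O(\|t\|^{-2})$ blow-ups of $(I+D^2_{v(F)}\Co^X)^{-1}$ and $(I-(D^2_{v(F)}\Co^X)^2)^{-1}$ must be paired against $O(\|t\|^2)$ decay of differences of covariance derivatives. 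Your bound on the conditional covariance ($\preceq\Sigma_{UU}$, hence $O(1)$ without touching $\Sigma_{VV}^{-1}$) is also correct. The only point you should make fully explicit is the uniformity of $\|(I+D(t))^{-1}\|$ over $t\in\overline{B^{d-m}_{2N}}$ and $F$: this follows from isotropy (the eigenvalues $1-R''(\|t\|)$ and $1-R'(\|t\|)\|t\|^{-1}$ depend only on $\|t\|$), assumption \ref{as:nondegenerate} for $t\neq 0$, the limit $2I_{d-m}$ at $t=0$, and compactness — i.e.\ the easy half of the argument the paper carries out in Lemma \ref{lem:matriceBounds}.
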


\begin{proof}
  We start with an application of the Cauchy-Schwarz inequality to obtain for $t\in \R^{d-m}$
  \begin{align*}
    \E&\left[ |\det D^2_{v(F)}(X)(\rho_{v(F)}^F(t))\det D^2_{v(F)}(X)(\rho_{v(F)}^F(0))| \mid \mathcal{E}(F,t,y) \right] \notag
    \\&\leq \E\left[ \det (D^2_{v(F)}(X)(\rho_{v(F)}^F(t)))^2 \mid \mathcal{E}(F,t,y) \right]^{\frac{1}{2}}\notag
    \E\left[ \det (D^2_{v(F)}(X)(\rho_{v(F)}^F(0)))^2 \mid \mathcal{E}(F,t,y) \right]^{\frac{1}{2}}\notag\\
    &=\E\left[ \det (D^2_{v(F)}(X)(\rho_{v(F)}^F(0)))^2 \mid \mathcal{E}(F,-t,y) \right]^{\frac{1}{2}}\E\left[ \det (D^2_{v(F)}(X)(\rho_{v(F)}^F(0)))^2 \mid \mathcal{E}(F,t,y) \right]^{\frac{1}{2}}
  \end{align*}
  where we used stationarity in the last equation. We bound the right factor by a bound solely depending on the norm of $t$, hence giving a bound for the left one as well.

  We first use Hadamard's inequality, cf. \cite [Fact 8.17.11]{book:Bernstein}, which reads: For a symmetric and positive semidefinite matrix $A\in \R^{(d-m)\times (d-m)}$ and an orthonormal basis $(u_1,\ldots, u_{d-m})$ of $\R^{d-m}$, we have that
  \begin{align*}
    \det (A) \leq \prod_{i=1}^{d-m} \langle A u_i,u_i\rangle.
  \end{align*}
  Note that for $t\in \R^{d-m}$, we have
  \begin{align*}
    t^\top D^2_{v(F)}(X)(\rho_{v(F)}^F(0))^2 t = (D^2_{v(F)}(X)(\rho_{v(F)}^F(0))t)^\top D^2_{v(F)}(X)(\rho_{v(F)}^F(0))t \geq 0
  \end{align*}
  and therefore we obtain for $t\in \R^{d-m}\setminus\{0\}$ and a suitable choice of $(u_2,\ldots,u_{d-m})$
  \begin{align}\label{eq:detinequality}
     \det (D^2_{v(F)}(X)(\rho_{v(F)}^F(0)))^2\notag &\leq \left \langle D^2_{v(F)}(X)(\rho_{v(F)}^F(0))^2\frac{t}{\|t\|}, \frac{t}{\|t\|}\right \rangle \prod_{i=2}^{d-m} \langle D^2_{v(F)}(X)(\rho_{v(F)}^F(0))^2u_i,u_i \rangle\notag\\
     &\leq \|t\|^{-2}\| D^2_{v(F)}(X)(\rho_{v(F)}^F(0))t\|^2 \| D^2_{v(F)}(X)(\rho_{v(F)}^F(0))\|^{2(d-m-1)},
  \end{align}
  where we used the symmetry of $D^2_{v(F)}(X)(\rho_{v(F)}^F(0))$, Cauchy-Schwarz and a matrix norm that is compatible to the Euclidean norm and submultiplicative, e.g. the induced Euclidean norm. Define now the mapping
  \begin{align*}
    Y_t\colon [0,1] \to \R^{d-m}, x \mapsto \nabla_{v(F)}(X)(\rho_{v(F)}^F(xt)),\quad \text{for } t\in \R^{d-m},
  \end{align*}
  to obtain $Y_t(0)=\nabla_{v(F)}(X)(\rho_{v(F)}^F(0))$, $Y_t(1)=\nabla_{v(F)}X(\rho_{v(F)}^F(t))$ and
  \begin{align*}
    Y'_t(x)= D((\frac{\partial}{\partial v_i}X)_{i=1}^{d-m} \circ \rho_{v(F)}^F \circ \cdot t ) (x)= D^2_{v(F)}(X) (\rho_{v(F)}^F(xt))t,
  \end{align*}
  thus
  $Y'_t(0)=D^2_{v(F)}(X)(\rho_{v(F)}^F(0))t$. Calculating the second derivative of $Y_t$ yields for the $j$-th component of $Y''_t$, $j=1,\ldots,d-m$,
  \begin{align*}
    Y_{t,j}''(x) = \sum_{i=1}^{d}\frac{\partial}{\partial x} \frac{\partial^2}{\partial v_jv_i} (X)(\rho_{v(F)}^F(xt)) t_i= \left \langle \frac{\partial}{\partial v_j} D^2_{v(F)}(X) (\rho_{v(F)}^F(xt))t, t \right\rangle,
  \end{align*}
  where $\frac{\partial}{\partial v_j} D^2_{v(F)}X:=\left( \frac{\partial}{\partial v_j}\frac{\partial^2}{\partial v_i\partial v_j}X\right)_{i,j=1}^{d-m}
  $ Using Taylor's theorem for the mapping $Y$ at $0$ and evaluating the expansion at $1$, yields
  \begin{align*}
    Y_t(1)=Y_t(0) + Y'_t(0)+ \frac{1}{2}\left(Y_{t,1}''(\xi_1),\ldots,Y_{t,d-m}''(\xi_{d-m})\right),
  \end{align*}
  for suitable points $\xi_1,\ldots,\xi_{d-m}\in [0,1]$. Conditioning of the latter equation on the event given by $\mathcal{E}(F,t,y)=\{\nabla _{v(F)}(X)(\rho_{v(F)}^F(t))=\nabla _{v(F)}(X)(\rho_{v(F)}^F(0))=y^{v(F)}\}$ leads to
  \begin{align*}
    D^2_{v(F)}(X) (\rho_{v(F)}^F(0))t =-\frac{1}{2}\left (\left \langle \frac{\partial}{\partial v_i} D^2_{v(F)}(X) (\rho_{v(F)}^F(\xi_it))t, t \right \rangle \right)_{i=1}^{d-m}.
  \end{align*}
  By taking norms, an application of the Cauchy-Schwarz inequality in every component and the compatibility of the matrix norm, we obtain
  \begin{align*}
    \|D^2_{v(F)}(X) (\rho_{v(F)}^F(0))t\|^2 &\leq \frac{1}{4} \|t\|^4 \sum_{i=1}^{d-m}\left \| \frac{\partial}{\partial v_i} D^2_{v(F)}(X) (\rho_{v(F)}^F(\xi_it))\right \|^2
    \\&\leq \frac{1}{4} \|t\|^4 \sum_{i=1}^{d-m} \sup_{x\in [0,1]} \left \| \frac{\partial}{\partial v_i} D^2_{v(F)}(X) (\rho_{v(F)}^F(xt))\right \|^2.
  \end{align*}
  Hence, we conclude with (\ref{eq:detinequality}) and several applications of the Cauchy-Schwarz inequality in the last line
  \begin{align} \label{eq:detinequality2}
    &\E\left[ \det (D^2_{v(F)}(X)(\rho_{v(F)}^F(0)))^2 \mid \mathcal{E}(F,t,y) \right]\notag
    \\&\leq c\|t\|^{-2}\E\left[\|D^2_{v(F)}X(\rho_{v(F)}^F(0))t\|^2\|D^2_{v(F)}(X) (\rho_{v(F)}^F(0))\|^{2(d-m-1)}\mid \mathcal{E}(F,t,y)\right]\notag
    \\&\leq c\|t\|^2\sum_{i=1}^{d-m} \E\left[\sup_{x\in [0,1]} \left \| \frac{\partial}{\partial v_i} D^2_{v(F)}(X) (\rho_{v(F)}^F(xt))\right \|^2\|D^2_{v(F)}(X)(\rho_{v(F)}^F(0))\|^{2(d-m-1)}\mid \mathcal{E}(F,t,y) \right]\notag
    \\&\leq c\|t\|^2 \E\left[ \|D^2_{v(F)}(X)(\rho_{v(F)}^F(0))\|^{4(d-m-1)}\mid \mathcal{E}(F,t,y)\right]^{\frac{1}{2}}\notag
    \\&\quad \times\sum_{i=1}^{d-m}\E\left[\sup_{x\in [0,1]} \left \| \frac{\partial}{\partial v_i} D^2_{v(F)}(X) (\rho_{v(F)}^F(xt))\right \|^4 \mid \mathcal{E}(F,t,y)\right]^{\frac{1}{2}}.
  \end{align}
  Invoking the Lemmata \ref{lem:boundedExp1} and \ref{lem:boundedExp2}, we conclude the proof. \qedhere
\end{proof}

Before we prove the upper bound for the conditional expectations, we prove the following auxiliary lemma.
\begin{lemma}\label{lem:matriceBounds}
  There exists a constant $c>0$ depending on $X$, $d$, $m$ and $N$, such that for all $F\in A(d,d-m)$, where $F\cap B^d_N\neq \emptyset$, $t\in \overline{B^{d-m}_{2N }}$ and $\alpha,\beta,\gamma = 1, \ldots, d$, we have that
  \begin{align*}
    \left\|\left(I_{d-m}-D^2_{v(F)} (\Co^X)(\sigma^{v(F)}(t))\right)^{-1}\right\| &\leq c,\\
    \left \|\left(\frac{\partial^3}{\partial t_\alpha \partial t_\beta\partial v_i}(\Co^X)(\sigma^{v(F)}(t))\right)_{i=1}^{d-m}\right \|^2 \left \|\left (I_{d-m}- D^2_{v(F)}(\Co^X)(\sigma^{v(F)}(t))^2\right)^{-1}\right\| &\leq c,  \\
    \sup_{x\in[0,1]}\left \| \left (-\frac{\partial^4}{\partial t_\alpha \partial t_\beta \partial t_\gamma \partial v_i }(\Co^X)(\sigma^{v(F)}(xt)) \right)_{i=1}^{d-m}\right.\\
    + \left.\left(\frac{\partial^4}{\partial t_\alpha \partial t_\beta \partial t_\gamma \partial v_i }(\Co^X)(\sigma^{v(F)}((x-1)t)) \right)_{i=1}^{d-m} \right \|\left \|\left (I_{d-m}+ D^2_{v(F)}(\Co^X)(\sigma^{v(F)}(t))\right)^{-1}\right\| &\leq c.
  \end{align*}
\end{lemma}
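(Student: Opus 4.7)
The plan is to exploit isotropy to make $F$-uniformity automatic, then combine a continuity argument away from $t=0$ with a careful Taylor analysis near $t=0$, where the singularities are concentrated. By \ref{as:dif} and the explicit formula for $D^2_{v(F)}(\Co^X)(\sigma^{v(F)}(t))$ derived in the proof of Lemma~\ref{lem:boundDensity}, this matrix depends on $(F,t)$ only through $t\in\R^{d-m}$; the same rotation-invariance holds for the higher mixed derivatives appearing in the lemma. Consequently uniformity in $F$ is automatic, and I only need to check uniformity in $t\in\overline{B^{d-m}_{2N}}$, which is compact.

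For the first inequality I verify that $I_{d-m}-D^2_{v(F)}(\Co^X)(\sigma^{v(F)}(t))$ is positive definite throughout $\overline{B^{d-m}_{2N}}$. At $t=0$ it equals $2I_{d-m}$ by the normalization in \ref{as:dif}. For $t\neq 0$ the Schur complement of the covariance of $(\nabla_{v(F)}X(\rho_{v(F)}^F(t)),\nabla_{v(F)}X(\rho_{v(F)}^F(0)))$ equals the product $(I_{d-m}-D^2_{v(F)}\Co^X)(I_{d-m}+D^2_{v(F)}\Co^X)$, which is positive definite by \ref{as:nondegenerate}; since the two commuting factors are covariance matrices of $\tfrac{1}{\sqrt{2}}(\nabla_{v(F)}X(\rho_{v(F)}^F(0))\mp\nabla_{v(F)}X(\rho_{v(F)}^F(t)))$ and therefore positive semidefinite, each must be positive definite. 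Continuity on the compact set then yields a uniform positive lower bound on the smallest eigenvalue.

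For the second inequality, the determinant formula \eqref{eq:determinant} together with the Taylor expansions \eqref{eq:Taylorexp} gives that the smallest eigenvalue of $I_{d-m}-(D^2_{v(F)}\Co^X(\sigma^{v(F)}(t)))^2$ is of order $\|t\|^2$ near the origin, so its inverse has operator norm $O(\|t\|^{-2})$ there. On the other hand, stationarity makes $\Co^X$ even, so any third-order partial derivative of $\Co^X$ is odd and vanishes at $0$; a first-order Taylor expansion therefore yields $\|(\partial^3_{t_\alpha t_\beta v_i}\Co^X(\sigma^{v(F)}(t)))_{i=1}^{d-m}\|^2\leq c\|t\|^2$ on a neighborhood of $0$. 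The product is thus bounded there, and outside any fixed neighborhood of $0$ both factors are continuous on the compact set and stay away from degeneracy by \ref{as:nondegenerate}.

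The third inequality is the delicate one, since both factors are individually singular at $t=0$. Substituting \eqref{eq:Taylorexp} into the formula from the proof of Lemma~\ref{lem:boundDensity} yields
\begin{align*}
I_{d-m}+D^2_{v(F)}(\Co^X)(\sigma^{v(F)}(t))=\frac{\mu}{6}\|t\|^2\,I_{d-m}+\frac{\mu}{3}(t_it_j)_{i,j=1}^{d-m}+o(\|t\|^2),
\end{align*}
whose eigenvalues are of order $\|t\|^2$, so $\|(I_{d-m}+D^2_{v(F)}(\Co^X)(\sigma^{v(F)}(t)))^{-1}\|\leq c\|t\|^{-2}$ near $0$. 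To compensate, I show that the fourth-derivative difference is $O(\|t\|^2)$ uniformly in $x\in[0,1]$: writing $\Phi(s):=(\partial^4_{t_\alpha t_\beta t_\gamma v_i}\Co^X(s))_{i=1}^{d-m}$ and Taylor-expanding $\Phi((x-1)\sigma^{v(F)}(t))-\Phi(x\sigma^{v(F)}(t))$ at $s=0$, the constant terms cancel because $\Phi$ is even as a fourth derivative of an even function, and the linear terms cancel because their common gradient factor is a fifth-order derivative of $\Co^X$ at $0$, which is odd and hence vanishes. The quadratic remainder is $O(\|t\|^2)$ uniformly in $x$, matching the blow-up of the inverse. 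Outside a neighborhood of $0$, both factors are again uniformly bounded by continuity and \ref{as:nondegenerate}. The main obstacle is precisely this double singularity at $t=0$: one must exhibit a two-order cancellation by exploiting the parity of $\Co^X$ inherent in stationarity.
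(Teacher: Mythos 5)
Your proof is correct and follows essentially the same route as the paper: a split into a neighborhood of $t=0$ and its complement, continuity and compactness (with isotropy giving $F$-independence of $I_{d-m}\pm D^2_{v(F)}\Co^X(\sigma^{v(F)}(t))$) away from the origin, and near the origin Taylor expansions exploiting that odd-order derivatives of $\Co^X$ vanish at $0$, so that the $O(\|t\|^{2})$ decay of the numerators matches the $O(\|t\|^{-2})$ blow-up of the inverses, including the two-order cancellation in the third bound. The only imprecision is the claim that the mixed-derivative vectors such as $\bigl(\frac{\partial^3}{\partial t_\alpha\partial t_\beta\partial v_i}\Co^X(\sigma^{v(F)}(t))\bigr)_{i=1}^{d-m}$ are themselves rotation-invariant functions of $t$ alone --- they are not, but their norms are controlled uniformly in $F$ by bounding the directional derivatives by partial derivatives and taking suprema over $\overline{B^{d}_{2N}}$, exactly as the paper does, so the argument stands; your Schur-complement/covariance-factorization proof of positive definiteness of $I_{d-m}-D^2_{v(F)}\Co^X(\sigma^{v(F)}(t))$ is a clean (and slightly more direct) substitute for the paper's continuity-of-eigenvalues argument.
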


\begin{proof}
  We distinguish the case $t\in \overline{B^{d-m}_{2N}}\setminus U$, where $U$ is open and contains $0$, and the case in which $t\in U$. Note that for the different inequalities $U$ may be chosen different and we think of the matrix norm as the one, which suits us most, knowing that we can bound one by a multiple of the other.

  We start with $t\in \overline{B^{d-m}_{2N}}\setminus U$ and think of the norm as the spectral norm. Observe that in this case
  \begin{align*}
    \|A^{-1}\|= |\lambda_{\min}(A)|^{-1},
  \end{align*}
  where $A$ is an invertible, symmetric matrix and $\lambda_{\min}(A)$ denotes the eigenvalue of $A$ with smallest absolute value. Furthermore, we see by Lemma \ref{lem:detformula} and equation (\ref{eq:D^2_{v(F)}}), resp. equation (\ref{eq:Iso}), that the coefficients of the polynomials in $\lambda$
  \begin{align*}
    \det&(I_{d-m}-D^2_{v(F)}(\Co^X)(\sigma^{v(F)}(t)) - \lambda I_{d-m}),\\
    \det&(I_{d-m}-D^2_{v(F)}(\Co^X)(\sigma^{v(F)}(t))^2-\lambda I_{d-m}),\\
    \det&(I_{d-m}+D^2_{v(F)}(\Co^X)(\sigma^{v(F)}(t)) - \lambda I_{d-m}),
  \end{align*}
  are independent of $F$ but continuous in $t\in \overline{B^{d-m}_{2N}}\setminus U$. Due to \ref{as:nondegenerate}, we know that
  \begin{align*}
    0&\neq \det \Co\left(\nabla_{v(F)}X(0),\nabla_{v(F)}X(\sigma^{v(F)}(t))\right)=\det\left ( I_{d-m}-(D^2_{v(F)}(\Co^X)(\sigma^{v(F)}(t)))^2\right)\\
    &=\det\left( I_{d-m}-D^2_{v(F)}(\Co^X)(\sigma^{v(F)}(t))\right ) \det\left ( I_{d-m}+D^2_{v(F)}(\Co^X)(\sigma^{v(F)}(t))\right ),
  \end{align*}
  for $t\neq 0$ and therefore none of the involved matrices has eigenvalue $0$. And since the zeros of a polynomial are continuous in the coefficients, we conclude that the norms
  \begin{align*}
    \|(I_{d-m}-D^2_{v(F)}(\Co^X)(\sigma^{v(F)}(t)))^{-1}\|,\\
    \|(I_{d-m}-D^2_{v(F)}(\Co^X)(\sigma^{v(F)}(t))^2)^{-1}\|,\\
    \|(I_{d-m}+D^2_{v(F)}(\Co^X)(\sigma^{v(F)}(t)))^{-1}\|
  \end{align*}
  are bounded for $t\in \overline{B^{d-m}_{2N}}\setminus U$, independently of $F$.
  In order to bound the supremum of the norm $\left \|(-\frac{\partial^4}{\partial t_\alpha \partial t_\beta \partial t_\gamma \partial v_i }(\Co^X)(\sigma^{v(F)}(xt)) )_{i=1}^{d-m}+(\frac{\partial^4}{\partial t_\alpha \partial t_\beta \partial t_\gamma \partial v_i }(\Co^X)(\sigma^{v(F)}((x-1)t)) )_{i=1}^{d-m} \right\|$ for $x\in[0,1]$ as well as the norm $\left\|\left(\frac{\partial^3}{\partial t_\alpha \partial t_\beta\partial v_i}(\Co^X)(\sigma^{v(F)}(t))\right)_{i=1}^{d-m}\right\|$, we  bound the directional derivatives by the partial ones and use the continuity, as shown exemplarily in the following:
  \begin{align*}
  \left \|\left(\frac{\partial^3}{\partial t_\alpha \partial t_\beta\partial v_i}(\Co^X)(\sigma^{v(F)}(t))\right)_{i=1}^{d-m}\right \|^2&=\sum_{i=1}^{d-m}\frac{\partial^3}{\partial t_\alpha \partial t_\beta \partial v_i}(\Co^X)(\sigma^{v(F)}(t))^2
  \\&=\sum_{i=1}^{d-m}\left(\sum_{\gamma=1}^{d}v_i^{(\gamma)} \frac{\partial^3}{\partial t_\alpha \partial t_\beta \partial t_\gamma}(\Co^X)(\sigma^{v(F)}(t))\right)^2,
  \end{align*}
  which can be bounded by
  \begin{align*}
    (d-m)\left(\sum_{\gamma=1}^{d}\sup_{s\in \overline{B^{d}_{2N}}}\left | \frac{\partial^3}{\partial t_\alpha \partial t_\beta \partial t_\gamma}\Co^X(s)\right |\right)^2< \infty,
  \end{align*}
  independently of $F$ and $t$.

  To analyse the behaviour for $t$ near 0, observe that $I_{d-m}-D^2_{v(F)}\Co^X(t)\stackrel{\|t\|\to 0}{\longrightarrow}2I_{d-m}$ and thus $\|(I_{d-m}-D^2_{v(F)}\Co^X(t))^{-1}\|\to \frac{1}{2}$. Hence, there is no singularity at $t=0$ and the norm can easily be bounded using continuity arguments as above. Since $I_{d-m}+D^2_{v(F)}\Co^X(t)\stackrel{\|t\|\to 0}{\longrightarrow}0$, this is different in the other cases. We proceed with the second inequality of the assertion and use the identity (\ref{eq:Iso}) and the Taylor expansion derived in (\ref{eq:Taylorexp}), to obtain for $0\neq t\in \R^{d-m} $ and $\|t\|\to 0$
  \begin{align*}
    \left ( I_{d-m} - D^2_{v(F)}(\Co^X)(\sigma^{v(F)}(t))^2\right)^{-1} = (\Theta(t) + O(\|t\|^4))^{-1},
  \end{align*}
  uniformly in $F$, where $\Theta(t):=\frac{\mu}{3}\|t\|^2I_{d-m} + \frac{2}{3}\mu (t_it_j)_{i,j=1}^{d-m}$. Since, cf. Lemma \ref{lem:detformula},
  \begin{align*}
    \det \Theta (t) = (\mu/3 \|t\|^2)^{d-m}+ (\mu/3\|t\|^2)^{d-m-1}2/3\mu \|t\|^2 \neq 0
  \end{align*}
  for $t\neq 0$, we conclude that $\Theta(t)$ is invertible and we
  denote its inverse by $\Delta(t)$, for $t\neq 0$. Observe that for $\alpha\geq 0$ the identity $\Theta(\alpha t) = \alpha^2 \Theta(t)$ holds and therefore $\Delta(\alpha t)=\alpha^{-2}\Delta(t)$. Thus we obtain
  \begin{align*}
    \left ( I_{d-m} - D^2_{v(F)}(\Co^X)(\sigma^{v(F)}(t))^2\right)^{-1} = \Delta (t) \left(I_{d-m}- O(\|t\|^4)\Delta(t)\right)^{-1}.
  \end{align*}
  Now, we can conclude from \cite [Proposition 9.4.13]{book:Bernstein}, that for a given matrix $A$ with $\|A(t)\|\to 0$ for $\|t\|\to 0$, we have $\|(I-A(t))^{-1}\|\leq1+\|A(t)\|+ o(\|A(t)\|)$. Before we apply this result, observe that
 \begin{align*}
                \sup_{u\in \mathbb{S}^{d-1}}\|\Delta (u)\|
 \end{align*}
  is actually a maximum and moreover independent of $F$. To see this, think of the norm again as the spectral norm and observe by Lemma \ref{lem:detformula}, that the zeros of the polynomial in $\lambda$
  \begin{align*}
    \det(\Theta(u) - \lambda I_{d-m})
  \end{align*}
  are independent of $F$ but continuous in $u\in \mathbb{S}^{d-1}$, from which we conclude the assertion. Thus we obtain
  \begin{align*}
    \| (I_{d-m} - O(\|t\|^4)\Delta(t))^{-1}\| &\leq 1 + \| O(\|t\|^4)\Delta(t) \| + o(\|O(\|t\|^4)\Delta(t)\|)\\
    &=1 + O(\|t\|^2) + o(\|t\|^2)\\
    &= 1+O(\|t\|^2),
  \end{align*}
  for $\|t\|\to 0$, where we used that $\|O(\|t\|^4)\Delta(t)\| = \| O(\|t\|^4)\|t\|^{-2} \Delta(t/\|t\|))\|= O(\|t\|^2)$ and $g\in o(O(f))$ yields $g\in o(f)$. Hence, we conclude
  \begin{align*}
    \| \left ( I_{d-m} - D^2_{v(F)}(\Co^X)(\sigma^{v(F)}(t))^2\right)^{-1}\| &\leq \| \Delta(t)\| (1+O(\|t\|^2))\\
    &= O(\|t\|^{-2})
  \end{align*}
  for $\|t\|\to 0$ and  uniformly in $F$. Taylor's theorem applied to $\frac{\partial ^3}{ \partial t_\alpha \partial t_\beta\partial v_i}(\Co^X)(\sigma^{v(F)}(\cdot))$ yields for $i=1,\ldots,d-m$, $0\neq t\in \R^{d-m}$ and $\xi\in [0,1]$
  \begin{align*}
    \frac{\partial ^3}{ \partial t_\alpha \partial t_\beta\partial v_i}(\Co^X)(\sigma^{v(F)}(t))&= \frac{\partial ^3}{ \partial t_\alpha \partial t_\beta\partial v_i}\Co^X(0) + \sum_{j=1}^{d-m} \frac{\partial}{\partial t_j}(\frac{\partial ^3}{ \partial t_\alpha \partial t_\beta\partial v_i}\Co^X\circ\sigma^{v(F)})(\xi t_j)t_j\\
    &= O(\|t\|),
  \end{align*}
  since $\frac{\partial ^3}{ \partial t_\alpha \partial t_\beta\partial v_i}\Co^X(0)=0$ by stationarity, cf. \cite [Equation (5.5.3)]{book:AdlerTaylor}. Note this equality holds uniformly in $F$, since $\frac{\partial}{\partial t_j}(\frac{\partial ^3}{ \partial t_\alpha \partial t_\beta\partial v_i}\Co^X\circ\sigma^{v(F)}(t))$ can be bounded independently of $F$ for $t\in \overline{B^{d-m}_{2N}}$. Therefore, we conclude
  \begin{align*}
    \left\|\left (\frac{\partial^3}{\partial t_\alpha \partial t_\beta \partial v_i}(\Co^X)(\sigma^{v(F)}(t))\right ) _{i=1}^{d-m}\right\|^2 \left \|(I_{d-m}- D^2_{v(F)}(\Co^X)(\sigma^{v(F)}(t))^2)^{-1}\right\| = O(1),
  \end{align*}
  for $\|t\|\to 0$ and uniformly in $F$.

  To show the last inequality of the assertion, we use identity (\ref{eq:D^2_{v(F)}}) and the Taylor expansion in (\ref{eq:Taylorexp}), to obtain for $0\neq t\in \R^{d-m}$
  \begin{align*}
    I_{d-m}&+D^2_{v(F)}(\Co^X)(\sigma^{v(F)}(t)\\
    &=(1+R'(\|t\|)\|t\|^{-1})I_{d-m} + \|t\|^{-2}(R''(\|t\|)-R'(\|t\|)\|t\|^{-1})(t_it_j)_{i,j=1}^{d-m}\\
    &=\frac{\mu}{6}\|t\|^2I_{d-m}+ \frac{\mu}{3}(t_it_j)_{i,j=1}^{d-m} + O(\|t\|^4)\\
    &=\frac{1}{2}\Theta(t)+O(\|t\|^4).
  \end{align*}
  The same approach as before, yields
  \begin{align*}
    \left \| (I_{d-m}+D^2_{v(F)}(\Co^X)(\sigma^{v(F)}(t)))^{-1}\right \| = O(\|t\|^{-2})
  \end{align*}
  uniformly in $F$.
  Taylor's theorem applied to $-\frac{\partial^4}{\partial t_\alpha \partial t_\beta \partial t_\gamma \partial v_i}(\Co^X)(\sigma^{v(F)}(x\cdot))$, yields for $t\in \overline{B^{d-m}_{2N}}$, $i=1,\ldots,d-m$ and $x\in [0,1]$
  \begin{align*}
    -\frac{\partial^4}{\partial t_\alpha \partial t_\beta \partial t_\gamma \partial v_i}(\Co^X)(\sigma^{v(F)}(xt))  =-\frac{\partial^4}{\partial t_\alpha \partial t_\beta \partial t_\gamma \partial v_i}\Co^X(0)+ O(\|t\|^2),
  \end{align*}
  since $\frac{\partial^5}{\partial t_j \partial t_\alpha \partial t_\beta \partial t_\gamma \partial v_i}\Co^X(0) =0$, $j=1,\ldots,d$, as $X$ is stationary, cf. \cite [Equation (5.5.3)]{book:AdlerTaylor}. We note that this equality holds uniformly in $F$ and $x\in[0,1]$. Analogously, we obtain
  \begin{align*}
    -\frac{\partial^4}{\partial t_\alpha \partial t_\beta \partial t_\gamma \partial v_i}(\Co^X)(\sigma^{v(F)}((x-1)t))  =-\frac{\partial^4}{\partial t_\alpha \partial t_\beta \partial t_\gamma \partial v_i}\Co^X(0)+ O(\|t\|^2)
  \end{align*}
  uniformly in $F$ and $x\in[0,1]$. Thus
  \begin{align*}
    \sup_{x\in[0,1]}& \left \|\left (-\frac{\partial^4}{\partial t_\alpha \partial t_\beta \partial t_\gamma \partial v_i }(\Co^X)(\sigma^{v(F)}(xt)) \right)_{i=1}^{d-m}\right. \\
    &+\left.\left (\frac{\partial^4}{\partial t_\alpha \partial t_\beta \partial t_\gamma \partial v_i }(\Co^X)(\sigma^{v(F)}((x-1)t)) \right)_{i=1}^{d-m} \right \|=O(\|t\|^2)
  \end{align*}
  and therefore the assertion of the lemma follows. \qedhere
\end{proof}

\begin{lemma} \label{lem:boundedExp1}
  There is a constant $c>0$, such that for $F\in A(d,d-m)$, where $F\cap B^d_N \neq \emptyset$,  $t\in \overline{B^{d-m}_{2N}}$ and $y\in F^\circ$
  \begin{align*}
    \E\left[ \|D^2_{v(F)}(X)(\rho_{v(F)}^F(0))\|^{4(d-m-1)}\mid \mathcal{E}(F,t,y)\right] \leq c(1+\|y\|^{4(d-m-1)}).
  \end{align*}
\end{lemma}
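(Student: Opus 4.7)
The plan is to use Gaussian regression in the vein of \cite[Proposition 1.2]{book:AzaisWschebor} applied to the jointly Gaussian collection $\bigl(D^2_{v(F)}X(\rho_{v(F)}^F(0)),\,\nabla_{v(F)}X(\rho_{v(F)}^F(0)),\,\nabla_{v(F)}X(\rho_{v(F)}^F(t))\bigr)$. This yields a representation
\begin{align*}
  D^2_{v(F)}X(\rho_{v(F)}^F(0))\mid\mathcal{E}(F,t,y)\ \stackrel{\mathcal{D}}{=}\ A(F,t)\,y^{v(F)}+Z(F,t),
\end{align*}
where $A(F,t)$ is a deterministic linear map from $\R^{d-m}$ into the space of symmetric $(d-m)\times(d-m)$-matrices, and $Z(F,t)$ is a centered Gaussian matrix-valued variable whose law does not depend on $y$. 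From here, an application of the elementary inequality $(a+b)^{4(d-m-1)}\le 2^{4(d-m-1)-1}(a^{4(d-m-1)}+b^{4(d-m-1)})$ reduces the problem to showing that $\|A(F,t)\|$ is bounded by a constant $c$ independent of $F$ and $t\in\overline{B^{d-m}_{2N}}$, and that $\E[\|Z(F,t)\|^{4(d-m-1)}]$ is similarly bounded.

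The first step is to identify $A(F,t)$ explicitly. Because of the vanishing recorded in \eqref{eq:covFirstDer}, the matrix $D^2_{v(F)}X(\rho_{v(F)}^F(0))$ is uncorrelated with $\nabla_{v(F)}X(\rho_{v(F)}^F(0))$; hence only the cross-covariance with $\nabla_{v(F)}X(\rho_{v(F)}^F(t))$ contributes. A standard computation then expresses $A(F,t)$ as a product of the third-derivative cross-covariance matrix $\bigl(\tfrac{\partial^3}{\partial v_\alpha\partial v_\beta\partial v_i}\Co^X(\sigma^{v(F)}(t))\bigr)$ and the inverse $\bigl(I_{d-m}-D^2_{v(F)}\Co^X(\sigma^{v(F)}(t))^2\bigr)^{-1}$, up to a multiplicative factor of order $1$ coming from the $(I\pm D^2_{v(F)}\Co^X(\sigma^{v(F)}(t)))^{-1}$ appearing in the Schur-complement formula. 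The second estimate of Lemma \ref{lem:matriceBounds} bounds precisely this product uniformly in $F$ and $t\in\overline{B^{d-m}_{2N}}$, yielding $\|A(F,t)\|\le c$.

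The second step controls the residual. By the Schur complement, the conditional covariance of $Z(F,t)$ equals the unconditional covariance of $D^2_{v(F)}X(\rho_{v(F)}^F(0))$ minus a positive semidefinite correction, so in particular it is dominated entrywise by the unconditional covariance, whose entries are bounded by $\widetilde\psi(0)\le d^2\psi(0)$ in view of \ref{as:covariance}. Since all moments of a centered Gaussian vector are controlled by its covariance, this gives $\E[\|Z(F,t)\|^{4(d-m-1)}]\le c$ uniformly in $F$ and $t$. Combining the two bounds yields the asserted estimate $\E[\|D^2_{v(F)}X(\rho_{v(F)}^F(0))\|^{4(d-m-1)}\mid\mathcal{E}(F,t,y)]\le c(1+\|y\|^{4(d-m-1)})$.

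The main obstacle is the behaviour of the regression coefficient $A(F,t)$ as $t\to0$, when the $2(d-m)$-dimensional joint covariance of the two gradients becomes singular. The inverse of this covariance blows up like $\|t\|^{-2}$, but by stationarity the third derivatives of $\Co^X$ appearing in the cross-covariance vanish linearly in $t$; the delicate cancellation is exactly what Lemma \ref{lem:matriceBounds} provides, so once that lemma is available the present statement follows by a standard Gaussian regression argument.
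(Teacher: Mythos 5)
Your overall architecture is the right one and matches what the paper intends (Gaussian regression for $D^2_{v(F)}X(\rho^F_{v(F)}(0))$ given $\mathcal{E}(F,t,y)$, then bound the conditional mean and the residual separately); your treatment of the residual via positive-semidefinite domination of the conditional covariance is correct and in fact simpler than the paper's template for Lemma \ref{lem:boundedExp2}, since it avoids bounding the explained variance and the Landau--Shepp suprema altogether. However, the justification of the bound $\|A(F,t)\|\le c$ contains a genuine error. Write $M:=D^2_{v(F)}(\Co^X)(\sigma^{v(F)}(t))$ and let $K$ denote the third-derivative cross-covariance row vectors, so that $C_{12}=(0,K)$ and $C_2^{-1}=\begin{pmatrix}A&B\\B&A\end{pmatrix}$ with $A=(I-M^2)^{-1}$, $B=(I-M^2)^{-1}M$. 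You describe $A(F,t)$ as the product of $K$ with $(I-M^2)^{-1}$ up to a factor of order $1$, and claim the second estimate of Lemma \ref{lem:matriceBounds} "bounds precisely this product". It does not: that estimate controls $\|K\|^2\,\|(I-M^2)^{-1}\|$, with a \emph{square} on $K$, whereas the first-power product $\|K\|\,\|(I-M^2)^{-1}\|$ is of order $\|t\|\cdot\|t\|^{-2}=\|t\|^{-1}$ near $t=0$ and hence unbounded. The same mismatch appears in your closing paragraph: a linear vanishing of the third derivatives cannot cancel a $\|t\|^{-2}$ blow-up, so the power counting you invoke does not close.

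The step is nonetheless repairable, for a reason different from the one you give. Because the conditioning event forces \emph{both} gradients to equal the same value $y^{v(F)}$, the coefficient of $y$ is $C_{12}C_2^{-1}(y^{v(F)},y^{v(F)})^\top=K(A+B)\,y^{v(F)}=K\left(I_{d-m}-M\right)^{-1}y^{v(F)}$, and $\left(I_{d-m}-M\right)^{-1}$ is the \emph{regular} inverse (it tends to $\tfrac12 I_{d-m}$ as $t\to0$); it is bounded by the \emph{first} estimate of Lemma \ref{lem:matriceBounds}, and $K$ is bounded by continuity of the third derivatives on $\overline{B^{d}_{2N}}$. So no cancellation is needed for the conditional mean at all. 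The genuine cancellation encoded by the second estimate of Lemma \ref{lem:matriceBounds} (two powers of $K$ against one power of $(I-M^2)^{-1}$) lives in the explained variance $C_{12}C_2^{-1}C_{12}^\top=K(I-M^2)^{-1}K^\top$, which the paper's template must control but which your domination argument for $Z(F,t)$ happens to bypass. If you replace your identification of $A(F,t)$ by the computation above and cite the first rather than the second estimate of Lemma \ref{lem:matriceBounds}, the proof is complete; also note that positive-semidefinite domination controls the diagonal (hence the variances of the entries of $Z(F,t)$), which is what your moment bound actually uses, rather than an entrywise domination.
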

Since the proofs of Lemmata \ref{lem:boundedExp1} and \ref{lem:boundedExp2} follow the same idea, we only show the proof of Lemma \ref{lem:boundedExp2}.
\begin{lemma} \label{lem:boundedExp2}
  There is a constant $c>0$, such that for $F\in A(d,d-m)$, where $F\cap B^d_N \neq \emptyset$, $t\in \overline{B^{d-m}_{2N}}$, $y\in F^\circ$, $i\in \{1,\ldots, d-m\}$
  \begin{align*}
    \E\left[ \sup_{x\in[0,1]}\left \|\frac{\partial}{\partial v_i}D^2_{v(F)}(X)(\rho_{v(F)}^F(xt))\right \|^{4}\mid \mathcal{E}(F,t,y)\right] \leq c(1+\|y\|^4).
  \end{align*}
\end{lemma}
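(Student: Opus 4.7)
The proof would proceed by Gaussian regression, in the spirit of the proof of Lemma \ref{lem:boundedExp1} and of \cite[Proposition 1.1]{paper:EstradeLeon}. The key step is the following decomposition of the conditional random process: by \cite[Proposition 1.2]{book:AzaisWschebor}, there exist a deterministic matrix-valued function $A(F,t,x)$ and a centered Gaussian process $Z(F,t,\cdot)$ on $[0,1]$ whose law does not depend on $y$, such that, under the conditional distribution given $\mathcal{E}(F,t,y)$,
\begin{align*}
\frac{\partial}{\partial v_i} D^2_{v(F)}X(\rho^F_{v(F)}(xt)) \stackrel{\mathcal{D}}{=} A(F,t,x)\, y^{v(F)} + Z(F,t,x).
\end{align*}
Combined with the elementary inequality $\|a+b\|^4 \leq 8(\|a\|^4 + \|b\|^4)$ this reduces the assertion to showing
\begin{align*}
\sup_{x\in[0,1]} \|A(F,t,x)\| \leq c \quad \text{and} \quad \E\Big[\sup_{x\in[0,1]} \|Z(F,t,x)\|^4\Big] \leq c,
\end{align*}
both uniformly in $F$ and $t$.

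For the first estimate I would compute $A(F,t,x)$ by block-matrix inversion of the covariance matrix of the conditioning variables. Writing $M := -D^2_{v(F)}\Co^X(\sigma^{v(F)}(t))$, one checks (using that $M$ is symmetric and hence commutes with $(I_{d-m}-M^2)^{-1}$) that
\begin{align*}
\begin{pmatrix} I_{d-m} & M \\ M & I_{d-m}\end{pmatrix}^{-1}\begin{pmatrix}y^{v(F)}\\ y^{v(F)}\end{pmatrix} = \begin{pmatrix}(I_{d-m}+M)^{-1} y^{v(F)}\\ (I_{d-m}+M)^{-1} y^{v(F)}\end{pmatrix},
\end{align*}
so that $A(F,t,x)$ is the sum of the two blocks $\Co\bigl(\frac{\partial}{\partial v_i}D^2_{v(F)}X(\rho^F_{v(F)}(xt)),\, \nabla_{v(F)}X(\rho^F_{v(F)}(t))\bigr)$ and $\Co\bigl(\frac{\partial}{\partial v_i}D^2_{v(F)}X(\rho^F_{v(F)}(xt)),\, \nabla_{v(F)}X(\rho^F_{v(F)}(0))\bigr)$, each multiplied on the right by $(I_{d-m}-D^2_{v(F)}\Co^X(\sigma^{v(F)}(t)))^{-1}$. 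The two covariance factors involve only fourth-order partial derivatives of $\Co^X$, hence are uniformly bounded via $\widetilde\psi \leq d^2\psi \in L^\infty(\R^d)$, while the first inequality of Lemma \ref{lem:matriceBounds} bounds the inverse matrix uniformly. Combining these estimates yields $\sup_x\|A(F,t,x)\| \leq c$.

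For the second estimate I would use that $Z(F,t,\cdot)$ has almost surely continuous paths on the compact interval $[0,1]$ (since the trajectories of $X$ are almost surely $\mathcal{C}^3$ by \ref{as:dif}), and that its pointwise variance is dominated by the unconditional variance of $\frac{\partial}{\partial v_i}D^2_{v(F)}X(0)$, which is uniformly bounded thanks to \ref{as:covariance}. The Borell--TIS inequality then gives Gaussian-type concentration of $\sup_x\|Z(F,t,x)\|$ about its expectation, ensuring finiteness of all moments. The main obstacle I foresee is precisely the uniformity of $\E[\sup_x\|Z(F,t,x)\|^4]$ in $(F,t)$; this requires a uniform control of the modulus of continuity of $Z(F,t,\cdot)$, for which stationarity and isotropy of $X$ together with \ref{as:covariance} and the compactness of the parameter region $\{(F,t) : F\cap B^d_N \neq \emptyset,\; t \in \overline{B^{d-m}_{2N}}\}$ provide the needed uniformity by reducing all relevant Gaussian laws to a single compact family.
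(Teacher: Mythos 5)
Your overall strategy --- Gaussian regression splitting the conditioned field into a deterministic part $A(F,t,x)\,y^{v(F)}$ plus a residual $Z(F,t,x)$ whose law does not depend on $y$ --- is the same as the paper's, and your treatment of the $y$-part is correct and in fact slightly cleaner than the paper's: because the conditioning value is the \emph{same} vector $y^{v(F)}$ in both blocks of the conditioning variable $X_2(F,t)=(\nabla_{v(F)}(X)(\rho_{v(F)}^F(0)),\nabla_{v(F)}(X)(\rho_{v(F)}^F(t)))$, applying $C_2^{-1}(F,t)=\Co(X_2(F,t))^{-1}$ to $(y^{v(F)},y^{v(F)})$ only involves $(I_{d-m}-D^2_{v(F)}(\Co^X)(\sigma^{v(F)}(t)))^{-1}$, which is uniformly bounded by the first inequality of Lemma \ref{lem:matriceBounds}, so no cancellation is needed for that term.

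The gap is in the residual term, and it is exactly where the paper has to work hardest. You propose to bound $\E[\sup_x\|Z(F,t,x)\|^4]$ by Borell--TIS together with a ``reduction to a single compact family'', but $Z(F,t,\cdot)$ is \emph{not} the restriction of one fixed Gaussian field to a subset of its parameter space: its law depends on $(F,t)$ through the regression coefficient $C_{12}^{\alpha,\beta,\gamma}(F,x,t)C_2^{-1}(F,t)$, and $\|C_2^{-1}(F,t)\|$ blows up like $\|t\|^{-2}$ as $t\to0$, since the block $(I_{d-m}+D^2_{v(F)}(\Co^X)(\sigma^{v(F)}(t)))^{-1}$ degenerates because $D^2\Co^X(0)=-I_d$. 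Moreover Borell--TIS only gives concentration around $\E[\sup_x\|Z(F,t,\cdot)\|]$, so the uniform bound on that first moment --- which is the whole difficulty --- remains unproved. The paper resolves this by bounding $\sup_{x\in[0,1]}\|C_{12}^{\alpha,\beta,\gamma}(F,x,t)C_2^{-1}(F,t)\|$ uniformly, and this rests on the cancellation recorded in the third inequality of Lemma \ref{lem:matriceBounds}: the difference $K^{\alpha,\beta,\gamma}(F,\sigma^{v(F)}(xt))-K^{\alpha,\beta,\gamma}(F,\sigma^{v(F)}((x-1)t))$ is $O(\|t\|^2)$ (the fifth-order derivatives of $\Co^X$ vanish at the origin by stationarity) and exactly compensates the $O(\|t\|^{-2})$ blow-up of $(I_{d-m}+D^2_{v(F)}(\Co^X)(\sigma^{v(F)}(t)))^{-1}$; the remaining unconditional suprema are then handled by Landau--Shepp. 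If you want to avoid this computation, the one observation that would rescue your route is that conditioning is an orthogonal projection in $L^2(\mathbb{P})$, so the canonical metric of $Z(F,t,\cdot)$ on $[0,1]$ is dominated by that of $x\mapsto\frac{\partial^3}{\partial t_\alpha\partial t_\beta\partial t_\gamma}(X)(\rho_{v(F)}^F(xt))$, which \emph{is} a restriction of a single field on $\overline{B^d_{3N}}$; combined with a chaining bound this yields the required uniformity in $(F,t)$. As written, neither this observation nor the cancellation appears in your proposal, so the bound on the residual term is not established.
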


\begin{proof}
  We start with the following estimate
  \begin{align*}
    \sup_{x\in[0,1]}\left\|\frac{\partial}{\partial v_i}D^2_{v(F)}(X)(\rho_{v(F)}^F(xt))\right\|^{4}
    &\leq (d-m)^4d^{9}\sum_{\alpha,\beta,\gamma =1}^{d}\sup_{x\in[0,1]} \left |\frac{\partial^3}{\partial t_\alpha\partial t_\beta \partial t_\gamma }(X)(\rho_{v(F)}^F(xt))\right | ^4,
  \end{align*}
   by Jensen's inequality and and the fact that $(v_1,\ldots,v_{d-m})$ is an orthonormal basis. By using Gaussian regression, cf. \cite [Proposition 1.2]{book:AzaisWschebor}, we obtain
  \begin{align*}
    \E&\left[ \sup_{x\in[0,1]} \left |\frac{\partial^3}{\partial t_\alpha\partial t_\beta \partial t_\gamma }(X)(\rho_{v(F)}^F(xt))\right | ^4\mid \mathcal{E}(F,t,y)\right]\\
    &=\E\left[\vphantom{
    \begin{pmatrix}
      y^{v(F)} \\
      y^{v(F)} \\
    \end{pmatrix}}
    \sup_{x\in[0,1]}\left |\frac{\partial^3}{\partial t_\alpha\partial t_\beta \partial t_\gamma }(X)(\rho_{v(F)}^F(xt))-C_{12}^{\alpha,\beta,\gamma}(F,x,t)C_{2}^{-1}(F,t)X_2(F,t)\right.\right. \\
    &\left.\left. \quad+C_{12}^{\alpha,\beta,\gamma}(F,x,t)C_{2}^{-1}(F,t)
    \begin{pmatrix}
      y^{v(F)} \\
      y^{v(F)} \\
    \end{pmatrix}  \right | ^4\vphantom{ \sup_{x\in[0,1]}}\right],
  \end{align*}
  where
  \begin{align*}
  X_2(F,t)&:= (\nabla _{v(F)}(X)(\rho_{v(F)}^F(0)),\nabla _{v(F)}(X)(\rho_{v(F)}^F(t))),\\
  C_{12}^{\alpha,\beta,\gamma}(F,x,t) &:= \Co\left(\frac{\partial^3}{\partial t_\alpha\partial t_\beta \partial t_\gamma }(X)(\rho_{v(F)}^F(xt)), X_2(F,t)\right)\\
  &=\left(K^{\alpha,\beta,\gamma}(F,\sigma^{v(F)}(xt)) , K^{\alpha,\beta,\gamma}(F,\sigma^{v(F)}((x-1)t))\right)\in\R^{1\times 2(d-m)}
  \end{align*}
  with $K^{\alpha,\beta,\gamma}(F,s) := \left(-\frac{\partial^4}{\partial t_\alpha \partial t_\beta \partial t_\gamma \partial v_i}\Co^X(s)\right)_{i=1}^{d-m}$ for $s \in \R^d$ and
  \begin{align*}
    C_{2}(F,t):= \Co(X_2(F,t))\in \R^{2(d-m)\times 2(d-m)}.
  \end{align*}
  Note that $C^{-1}_2(F,t)$ exists due to \ref{as:nondegenerate} and that by \cite [Proposition 2.8.7]{book:Bernstein}
  \begin{align*}
    C_2(F,t)^{-1}=
    \begin{pmatrix}
       A(F,t)& B(F,t) \\
      B(F,t) & A(F,t) \\
    \end{pmatrix},
  \end{align*}
  where
  \begin{align}\label{def:AB}
    A(F,t):&=(I_{d-m}-D^2_{v(F)}(\Co^X)(\sigma^{v(F)}(t))^2)^{-1},\notag\\
    B(F,t):&=-(I_{d-m}-D_{v(F)}^2(\Co^X)(\sigma^{v(F)}(t))^2)^{-1}D^2_{v(F)} (\Co^X)(\sigma^{v(F)}(t)).
  \end{align}
  By the triangle- and Jensen's inequality
  \begin{align*}
    \sup_{x\in[0,1]}&\left |\frac{\partial^3}{\partial t_\alpha\partial t_\beta \partial t_\gamma }(X)(\rho_{v(F)}^F(xt))-C_{12}^{\alpha,\beta,\gamma}(F,x,t)C_{2}^{-1}(F,t)X_2(F,t) \right . \\&\quad\left.+C_{12}^{\alpha,\beta,\gamma}(F,x,t)C_{2}^{-1}(F,t)\begin{pmatrix}
                                                   y^{v(F)} \\
                                                   y^{v(F)} \\
                                                 \end{pmatrix}  \right | ^4 \\
    &\leq 3^3\sup_{x\in[0,1]}\frac{\partial^3}{\partial t_\alpha\partial t_\beta \partial t_\gamma }(X)(\rho_{v(F)}^F(xt))^4+3^3\sup_{x\in[0,1]}|C_{12}^{\alpha,\beta,\gamma}(F,x,t)C_{2}^{-1}(F,t)X_2(F,t) | ^4\\
    &\quad + 3^3\sup_{x\in[0,1]}\left|C_{12}^{\alpha,\beta,\gamma}(F,x,t)C_{2}^{-1}(F,t)\begin{pmatrix}
                                                   y^{v(F)} \\
                                                   y^{v(F)} \\
                                                 \end{pmatrix}\right|^4.
  \end{align*}
  Again the submultiplicativity of the norm and Jensen's inequality yield
  \begin{align*}
    &\sup_{x\in[0,1]}|C_{12}^{\alpha,\beta,\gamma}(F,x,t)C_{2}^{-1}(F,t)X_2(F,t) | ^4\\
    &\quad\leq\sup_{x\in[0,1]}\|C_{12}^{\alpha,\beta,\gamma}(F,x,t)C_{2}^{-1}(F,t)\|^42(d-m)^2d^3\sum_{j=1}^d \left (\sup_{s\in \overline{B^{d}_{N}}}\frac{\partial}{\partial t_j} X(s)^4 +\sup_{s\in \overline{B^{d}_{3N}}}\frac{\partial}{\partial t_j} X(s)^4\right),
  \end{align*}
  where we used in the last line, that $F\cap B^d_N\neq \emptyset$ implies for $t\in \overline{B^{d-m}_{2N}}$ that $\|\rho_{v(F)}^F(t)\|\leq 3N$ holds, as well as $\|\rho_{v(F)}^F(0)\|\leq N$. Using this fact again, and summarizing the estimates, we obtain
  \begin{align*}
    \E&\left[ \sup_{x\in[0,1]}\left |\frac{\partial^3}{\partial t_\alpha\partial t_\beta \partial t_\gamma }(X)(\rho_{v(F)}^F(xt))-C_{12}^{\alpha,\beta,\gamma}(F,x,t)C_{2}^{-1}(F,t)X_2(F,t) \right. \right. \\&\quad\left.\left.+C_{12}^{\alpha,\beta,\gamma}(F,x,t)C_{2}^{-1}(F,t)
    \begin{pmatrix}
      y^{v(F)} \\
      y^{v(F)} \\
    \end{pmatrix}  \right | ^4\right ]\\
    &\leq 3^3 \E\left[ \sup_{s\in \overline{B^{d}_{3N}}}\frac{\partial^3}{\partial t_\alpha\partial t_\beta \partial t_\gamma }X(s)^4\right]
    +3^3\sup_{x\in[0,1]}\|C_{12}^{\alpha,\beta,\gamma}(F,x,t)C_2(F,t)^{-1}\|^42(d-m)^2d^3\\
    &\quad \times\sum_{j=1}^d \left (2\E\left[ \sup_{s\in \overline{B^{d}_{3N}}}\frac{\partial}{\partial t_j} X(s)^4 \right] \right)+3^3\sup_{x\in[0,1]}\left|C_{12}^{\alpha,\beta,\gamma}(F,x,t)C_{2}^{-1}(F,t)\begin{pmatrix}
                                                   y^{v(F)} \\
                                                   y^{v(F)} \\
                                                 \end{pmatrix}\right|^4.
  \end{align*}
  Note that the arguments of the expectations neither depend on $F$ nor on $t$ and moreover, the involved Gaussian fields are all continuous. The continuity implies that it is sufficient to bound the expectation of a supremum of a dense index set and moreover that the necessary conditions in \cite [Theorem 5]{paper:LandauShepp} are satisfied, which guarantees the finiteness of those expectations.

  To proof the Lemma, it remains to bound $\sup_{x\in[0,1]}\|C_{12}^{\alpha,\beta,\gamma}(F,x,t)C_{2}^{-1}(F,t)\|^4$ for $t\in \overline{B^{d-m}_{2N}}$, independently of $F$. Observe that
  \begin{align*}
    \|C_{12}^{\alpha,\beta,\gamma}&(F,x,t)C_{2}^{-1}(F,t)\| \\
    &\leq \| K^{\alpha,\beta,\gamma}(F,\sigma^{v(F)}(xt))A(F,t) +K^{\alpha,\beta,\gamma}(F,\sigma^{v(F)}((x-1)t))B(F,t) \|\\
    &\quad+ \|K^{\alpha,\beta,\gamma}(F,\sigma^{v(F)}(xt))B(F,t) +K^{\alpha,\beta,\gamma}(F,\sigma^{v(F)}((x-1)t))A(F,t)\|
  \end{align*}
  and moreover
  \begin{align*}
    K^{\alpha,\beta,\gamma}&(F,\sigma^{v(F)}(xt))A(F,t) +K^{\alpha,\beta,\gamma}(F,\sigma^{v(F)}((x-1)t))B(F,t)\\
    &=K^{\alpha,\beta,\gamma}(F,\sigma^{v(F)}((x-1)t))(A(F,t)+B(F,t)) \\&\quad +(K^{\alpha,\beta,\gamma}(F,\sigma^{v(F)}(xt))-K^{\alpha,\beta,\gamma}(F,\sigma^{v(F)}((x-1)t)))A(F,t)\\
    &=K^{\alpha,\beta,\gamma}(F,\sigma^{v(F)}((x-1)t))\left (I_{d-m}-D^2_{v(F)}(\Co^X)(\sigma^{v(F)}(t))\right)^{-1}\\
    &\quad +\left (K^{\alpha,\beta,\gamma}(F,\sigma^{v(F)}(xt))-K^{\alpha,\beta,\gamma}(F,\sigma^{v(F)}((x-1)t))\right)\\
     &\quad \times\left (I_{d-m}+D^2_{v(F)}(\Co^X)(\sigma^{v(F)}(t))\right )^{-1}\left (I_{d-m}-D^2_{v(F)}(\Co^X)(\sigma^{v(F)}(t))\right)^{-1},
  \end{align*}
  where we used that $A(F,t)+B(F,t)= (I_{d-m}-D^2_{v(F)}(\Co^X)(\sigma^{v(F)}(t)))^{-1}$. The above equals
  \begin{align*}
    &\left(\vphantom{\left (I_{d-m}+D^2_{v(F)}(\Co^X)(\sigma^{v(F)}(t))\right )^{-1}} K^{\alpha,\beta,\gamma}(F,\sigma^{v(F)}((x-1)t)) +\left (K^{\alpha,\beta,\gamma}(F,\sigma^{v(F)}(xt))-K^{\alpha,\beta,\gamma}(F,\sigma^{v(F)}((x-1)t))\right)\right.\\
    &\quad \left.\left (I_{d-m}+D^2_{v(F)}(\Co^X)(\sigma^{v(F)}(t))\right )^{-1} \right)\times \left (I_{d-m}-D^2_{v(F)}(\Co^X)(\sigma^{v(F)}(t))\right)^{-1}.
  \end{align*}
  Similarly, we obtain
  \begin{align*}
    K^{\alpha,\beta,\gamma}&(F,\sigma^{v(F)}(xt))B(F,t) +K^{\alpha,\beta,\gamma}(F,\sigma^{v(F)}((x-1)t))A(F,t) \\
    &=\left(\vphantom{\left (I_{d-m}-D^2_{v(F)}\Co^X(\sigma^{v(F)}(t))\right)^{-1}} K^{\alpha,\beta,\gamma}(F,\sigma^{v(F)}(xt)) -\left (K^{\alpha,\beta,\gamma}(F,\sigma^{v(F)}(xt))-K^{\alpha,\beta,\gamma}(F,\sigma^{v(F)}((x-1)t))\right)\right.\\
     &\quad\left. \left (I_{d-m}+D^2_{v(F)}(\Co^X)(\sigma^{v(F)}(t))\right )^{-1} \right) \times\left (I_{d-m}-D^2_{v(F)}(\Co^X)(\sigma^{v(F)}(t))\right)^{-1}.
  \end{align*}
  We now use Lemma \ref{lem:matriceBounds} to bound
  \begin{align*}
    \| \left (I_{d-m}-D^2_{v(F)}\Co^X(\sigma^{v(F)}(t))\right)^{-1}\|
  \end{align*}
  and
  \begin{align*}
    \sup_{x\in[0,1]}& \|\left (K^{\alpha,\beta,\gamma}(F,\sigma^{v(F)}(xt))-K^{\alpha,\beta,\gamma}(F,\sigma^{v(F)}((x-1)t))\right)\left (I_{d-m}+D^2_{v(F)}\Co^X(\sigma^{v(F)}(t))\right )^{-1} \|
  \end{align*}
  for $t\in \overline{B^{d-m}_{2N}}$, independently of $F$. Whereas for the term $\|K^{\alpha,\beta,\gamma}(F,\sigma^{v(F)}(xt))\|$ and the term $\|K^{\alpha,\beta,\gamma}(F,\sigma^{v(F)}((x-1)t))\|$, we bound the directional derivatives by the partial ones and use the continuity with the estimates $\|\sigma^{v(F)}((x-1)t)\|\leq 2N$ and $\|\sigma^{v(F)}(xt)\|\leq 2N$, to bound their norms for $x\in [0,1]$, $t\in \overline{B^{d-m}_{2N}}$, independently of $F$.
\end{proof}

\section*{Acknowledgements}
  I am thankful to my advisors Prof. Dr. Daniel Hug and Prof. Dr. G\"unter Last for valuable input and discussions throughout this work. This work was supported by the German Research Foundation (DFG) [grant number FOR1548] through the research unit “Geometry and Physics of Spatial Random Systems”.

%% Literaturverzeichnis  %%%%%%%%%%%%%%%%%
\providecommand{\bysame}{\leavevmode\hbox to3em{\hrulefill}\thinspace}
\providecommand{\MR}{\relax\ifhmode\unskip\space\fi MR }
% \MRhref is called by the amsart/book/proc definition of \MR.
\providecommand{\MRhref}[2]{%
  \href{http://www.ams.org/mathscinet-getitem?mr=#1}{#2}
}
\providecommand{\href}[2]{#2}

\bigskip
  \footnotesize

  \textsc{Karlsruhe Institute of Technology (KIT), Department of Mathematics, D-76131 Karlsruhe, Germany}\par\nopagebreak
  \textit{E-mail address:} \texttt{dennis.mueller@kit.edu}

\end{document}